\theoremstyle{definition}
\newtheorem{theorem}{Theorem}[section]
\newtheorem{lemma}[theorem]{Lemma}
\newtheorem{observation}[theorem]{Observation}
\theoremstyle{definition}
\newtheorem{definition}[theorem]{Definition}
\theoremstyle{remark}
\newtheorem{remark}[theorem]{Remark}
\newenvironment{rmenum}{
\begin{enumerate}

}
{\end{enumerate}}
\newcommand{\parNei}[2]{N_{#1}(#2)}
\newcommand{\parcut}[2]{\delta_{#1}(#2)}
\newcommand{\acset}[2]{\mathcal{S}^{#2}(#1)}
\newcommand{\nacset}[2]{\mathcal{T}^{#2}(#1)}
\newcommand{\sqrset}[1]{\mathcal{A}(#1)}
\newcommand{\reachset}[4]{\Lambda_{#1, #2}(#3, #4)}
\newcommand{\reachsets}[3]{\Lambda_{#1, #2}(#3)}
\newcommand{\oNei}[3]{{}_{#3} N_{#1}(#2)}
\newcommand{\ocut}[3]{{}_{#3}\delta_{#1}(#2)}
\newcommand{\rrset}[2]{\mathcal{R}^{#2}(#1)}
\newcommand{\lsrset}[2]{\mathcal{L}^{#2}(#1)}
\title[Bidirected Critical Graphs II]{Constructive Characterization for\\ Signed Analogue of Critical Graphs II:\\ General Radials and Semiradials}
\author{Nanao Kita}
\address{Tokyo University of Science 2641 Yamazaki, Noda, Chiba, Japan 278-0022}
\email{kita@rs.tus.ac.jp}
\date{\today}
\begin{document}

\begin{abstract}
This paper is a sequel of our preceding paper (N. Kita:  Constructive characterization for signed analogue of critical graphs I: Principal classes of radials and semiradials. arXiv preprint, arXiv:2001.00083, 2019). 
In the preceding paper, 
 the concepts of radials and semiradials are introduced, 
and   constructive characterizations for five principal classes of radials and semiradials are provided.  
Radials are a common analogue of critical graphs from matching theory and a class of directed graphs called flowgraphs, 
whereas semiradials are a relaxed concept of radials.  
Five classes of radials and semiradials, 
that is, 
absolute semiradials, strong and almost strong radials, linear semiradials, and sublinear radials, 
were defined and characterized in the paper. 
In this paper, 
we use these characterizations to provide a constructive characterization of general radials and semiradials. 
\end{abstract} 

\maketitle

\section{Definitions} 

We use the notation introduced in our preceding paper~\cite{kita2019constructive}. 
For basic notations for sets and graphs, we mostly follow Schrijver~\cite{schrijver2003}. 
In the following, we list  some newly introduced notation. 
Let $\alpha \in \{+, -\}$. Let $G$ be a bidirected graph, and let $S \subseteq V(G)$. 
We denote by $\ocut{G}{S}{\alpha}$ the set of edges from $\parcut{G}{S}$ in which the end in $S$ has the sign $\alpha$.  
The set $\oNei{G}{S}{\alpha}$ denotes the set of vertices from $\parNei{G}{S}$ 
that are joined with vertices in $S$ by an edge from $\ocut{G}{S}{\alpha}$. 
Let $X\subseteq V(G)$. 
The bidirected graph obtained from $G$ by contracting $X$ is denoted by $G/X$. 
That is, $G[X]$ is the bidirected graph obtained from $G$ by regarding $X$ as a single vertex $[X]$ and deleting the arising loops over $[X]$.

\begin{definition} 
Let $\alpha, \beta \in\{+, -\}$, let $G$ be a bidirected graph, and let $r\in V(G)$.  
The set of vertices that can reach $r$ by an $(\alpha, \beta)$-ditrail is denoted by $\reachset{G}{r}{\alpha}{\beta}$. 
The set of vertices that can reach $r$ by an $\alpha$-ditrail is denoted by $\reachsets{G}{r}{\alpha}$. 
\end{definition}

\begin{definition} 
Let $\alpha\in\{+, -\}$, let $G$ be a bidirected graph, and let $r\in V(G)$. 
A vertex $x\in V(G)$ is {\em absolute} with respect to $r$ 
if $x \in \reachsets{G}{r}{\alpha} \cap \reachsets{G}{r}{-\alpha}$ holds. 
By contrast, $x$ is {\em $\alpha$-linear} with respect to $r$ 
if $x \in \reachsets{G}{r}{\alpha}\setminus \reachsets{G}{r}{-\alpha}$ holds. 
A vertex $x\in V(G)$ is {\em $\alpha$-strong}  with respect to $r$ if 
$x \in \reachset{G}{r}{\alpha}{-\alpha}\cap \reachset{G}{r}{-\alpha}{-\alpha}$ holds. 
By contrast, $x$ is {\em $\alpha$-sublinear}  with respect to $r$ if 
$x\in \reachset{G}{r}{\alpha}{-\alpha} \setminus \reachset{G}{r}{-\alpha}{-\alpha}$ holds.  
If $G$ is an $\alpha$-semiradial with root $r$, 
then an absolute or $\alpha$-linear vertex with respect to $r$ can simply be called an absolute or linear vertex, respectively. 
If $G$ is an $\alpha$-radial with root $r$, 
then an $\alpha$-strong  or sublinear vertex with respect to $r$ can simply be called a strong or sublinear vertex, respectively. 
\end{definition}

\begin{definition} 
Let $G$ and $H$ be disjoint bidirected graphs. 
Let $s \in V(G)$ and $T\subseteq V(H)$. 
A gluing sum of $G$ and $H$ with respect to $s$ and $T$ 
is a bidirected graph obtained by identifying $s$ and $T$. 
More precisely, 
a bidirected graph $\hat{G}$ is a {\em gluing sum} of $G$ and $H$ with respect to $s$ and $T$  
if it satisfies the following properties: 
\begin{rmenum} 
\item $V(\hat{G}) =  ( V(G)\setminus \{s\} ) \cup V(H)$.  
\item Let $F \subseteq E(G)$ be the union of $\parcut{G}{s}$ and the set of loops over $s$. 
Then, $( E(G)\setminus F)  \cup E(H) \subseteq E(\hat{G})$. 
Also, there is a one-to-one mapping $f$ from $F$ to $E(\hat{G})\setminus (E(G)\setminus F) \setminus E(H)$ 
that satisfies the following properties: 
\begin{rmenum} 
\item 
If $e \in F$ is an edge between $s$ and $v\in \parNei{G}{s}$, 
then $f(e)$ is an edge between a vertex $t \in T$ and $v$  
 such that the signs of $t$ and $v$ over $f(e)$ are equal to the signs of $s$ and $v$ over $e$, respectively; and, 
\item  if $e\in F$ is a loop, then $f(e)$ is an $(\alpha_1, \alpha_2)$-edge between (possibly identical) vertices from $T$, 
where $\alpha_1, \alpha_2 \in \{+ , -\}$ are the signs of $s$ over $e$. 
\end{rmenum} 
\end{rmenum} 
We denote such $\hat{G}$ by $(G; s)\oplus (H; T)$.  
\end{definition}

\begin{remark} 
Note that a gluing sum is not always uniquely determined. 
However, for convenience, 
we may write $\hat{G} = (G; s)\oplus (H; T)$ 
to claim that $\hat{G}$ is a gluing sum of $G$ and $H$ with respect to $s$ and $T$.  
\end{remark}

\section{Edge Adding and Deleting Lemmas}  \label{sec:edgelem} 

\subsection{Edge Adding Lemmas} \label{sec:edgelem:add} 

In Section~\ref{sec:edgelem:add}, we introduce Lemmas~\ref{lem:edgeaddr} and \ref{lem:edgeaddsr}  to be used in later sections. 
For proving Lemma~\ref{lem:edgeaddr}, 
we first provide and prove Lemmas~\ref{lem:nobypass} and \ref{lem:oneadd}.

\begin{lemma} \label{lem:nobypass} 
Let $\alpha \in \{+, -\}$. 
Let $G$ be a bidirected graph, and let $r\in V(G)$. 
Let $x, y\in V(G)$. 
If either $x, y \in \reachset{G}{r}{\alpha}{-\alpha} \setminus \reachset{G}{r}{-\alpha}{-\alpha}$ 
or $x, y \in \reachsets{G}{r}{\alpha} \setminus \reachsets{G}{r}{-\alpha}$ holds, then there is no $(-\alpha, -\alpha)$-ditrail from $x$ to $y$. 
\end{lemma}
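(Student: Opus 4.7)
The plan is to argue by contradiction, handling both hypotheses uniformly. Suppose for contradiction there is a $(-\alpha, -\alpha)$-ditrail $T$ from $x$ to $y$. The strategy is to combine $T$ with a ditrail from $y$ to $r$ to manufacture a ditrail from $x$ to $r$ whose sign at $x$ is $-\alpha$, violating the assumption on $x$.

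Under either hypothesis, $y$ admits a ditrail $P$ to $r$ whose first edge has sign $\alpha$ at $y$: in the first case this is the given $(\alpha,-\alpha)$-ditrail, and in the second case it is any $\alpha$-ditrail guaranteed by $y \in \reachsets{G}{r}{\alpha}$. The last edge of $T$ has sign $-\alpha$ at $y$, so the signs at $y$ of $T$'s final edge and $P$'s initial edge are opposite, which is exactly the internal-vertex compatibility condition for bidirected ditrails. The concatenation $T \cdot P$ is therefore a walk from $x$ to $r$ starting with sign $-\alpha$ at $x$ and ending with sign $-\alpha$ at $r$ in the first case (or with some sign at $r$ in the second case). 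This yields a contradiction with $x \notin \reachset{G}{r}{-\alpha}{-\alpha}$ or $x \notin \reachsets{G}{r}{-\alpha}$, respectively.

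The step I expect to be the main obstacle is that $T \cdot P$ may repeat edges and hence is, in general, only a walk rather than a ditrail. This gap is closed by the standard walk-to-ditrail reduction for bidirected graphs: any walk from $u$ to $v$ with specified initial and terminal signs contains a ditrail from $u$ to $v$ with the same endpoint signs. If the preceding paper~\cite{kita2019constructive} records this as a lemma, I would cite it directly; otherwise I would supply a short inductive shortcutting argument that removes a closed sub-walk at a repeated vertex where the sign-compatibility is preserved, iterating until no edge is traversed twice. Since the endpoint signs at $x$ and $r$ are never touched by such shortcutting, the reduced object remains a ditrail with the desired sign pattern, completing the contradiction in both cases.
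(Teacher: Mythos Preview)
Your approach has a genuine gap: the ``standard walk-to-ditrail reduction'' you invoke is false for bidirected graphs. Consider vertices $u,a,b,c,v$ and edges $f_1=ua$ (signs $-,-$ at $u,a$), $e_1=ab$ (signs $+,+$ at $a,b$), $e_2=bc$ (signs $-,-$ at $b,c$), a parallel edge $e_3=bc$ (signs $-,+$ at $b,c$), and $f_2=av$ (sign $-$ at $a$). Then $(u,f_1,a,e_1,b,e_2,c,e_3,b,e_1,a,f_2,v)$ is a sign-compatible walk from $u$ to $v$ that is even the concatenation of two ditrails (split at the second occurrence of $b$), yet there is no ditrail from $u$ to $v$ at all: any such ditrail must begin with $f_1$, then $e_1$, then wander through $e_2,e_3$ and return to $b$, where it would need $e_1$ again to reach $a$. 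Moreover, every repeated vertex in this walk carries opposite incoming signs at its two occurrences, so your proposed shortcut at a sign-compatible repeated vertex never applies; excising either closed sub-walk (at $a$ or at $b$) destroys sign-compatibility at the splice.

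The paper avoids this obstacle by a different mechanism. Instead of concatenating and then reducing, it traces $Q$ backwards from $r$ and takes $z$ to be the first vertex encountered that lies on $P$; then $zQr$ meets $P$ only in $z$, hence is edge-disjoint from $P$, so both $xPz+zQr$ and $yP^{-1}z+zQr$ are genuine ditrails with no reduction needed. One of them has the correct starting sign $-\alpha$, but it may be the one starting at $y$ rather than at $x$; this is why the hypothesis on $y$ is used symmetrically with that on $x$, a feature your argument does not exploit. Your plan can be repaired by exactly this first-intersection trick, but not by a generic shortcutting lemma.
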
 
\begin{proof} %lem:nobypass
Suppose the contrary, and let $P$ be a $(-\alpha, -\alpha)$-ditrail from $x$ to $y$. 
First, consider the case $x, y \in \reachsets{G}{r}{\alpha} \setminus \reachsets{G}{r}{-\alpha}$. 
Let $Q$ be an $\alpha$-ditrail from $y$ to $r$. 
Trace $Q^{-1}$ from $r$, and let $z$ be the first encountered vertex from $P$. 
Then, either $xPz + zQr$ or $yP^{-1}z + zQr$ is a $-\alpha$-ditrail 
that contradicts $x\not\in \reachsets{G}{r}{-\alpha}$ or $y\not\in \reachsets{G}{r}{-\alpha}$, respectively. 
The case $x, y \in \reachset{G}{r}{\alpha}{-\alpha} \setminus \reachset{G}{r}{-\alpha}{-\alpha}$ can be proved in a similar way. 
The lemma is proved.

\end{proof}

Lemma~\ref{lem:nobypass} implies Lemma~\ref{lem:oneadd}.

\begin{lemma} \label{lem:oneadd} 
Let $\alpha \in \{+, -\}$. 
Let $G$ be a bidirected graph, 
and let $H$ be an induced subgraph of $G$  that is an $\alpha$-radial with root $r \in V(H)$. 
Assume $\parNei{G}{V(G)\setminus V(H)} \cap \reachset{G}{r}{-\alpha}{-\alpha} = \emptyset$. 
Let $u \in V(H)\setminus \reachset{G}{r}{-\alpha}{-\alpha}$ and $v\in V(G)\setminus V(H)$. 
Let $G'$ be a bidirected graph obtained from $G$ by adding an edge $e$ between $u$ and $v$ in which the sign of $u$ is $\alpha$. 
Let $x\in V(G)$ and $\beta\in\{+, -\}$.  
If $G'$ has a $(\beta, -\alpha)$-ditrail $P$ from $x$ to $r$ with $e\in E(P)$, 
then $G'$ has a $(\beta, -\alpha)$-ditrail $P'$ from $x$ to $r$ with $E(P') \cap \parcut{G}{H} \subseteq E(P)\cap \parcut{G}{H} \setminus \{e\}$. 
\end{lemma}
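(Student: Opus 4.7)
The strategy is to decompose $P$ at its unique traversal of $e$, eliminate one direction of traversal via the hypothesis $u\notin\reachset{G}{r}{-\alpha}{-\alpha}$, and reroute the remainder through $H$ using that $H$ is an $\alpha$-radial.

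Write $P = P_1\cdot\tilde{e}\cdot P_2$, where $\tilde{e}$ denotes $e$ traversed from $u$ to $v$ or from $v$ to $u$. Suppose first that $\tilde{e}$ goes from $v$ to $u$. Then $u\neq r$, for otherwise $P$ would end at $r$ via $e$ with end-sign $\alpha$, contradicting the end-sign $-\alpha$ of $P$. Hence $u$ is an internal vertex of $P$, and ditrail sign alternation at $u$ forces the first edge of $P_2$ to carry sign $-\alpha$ at $u$. The suffix $P_2$ is then a $(-\alpha,-\alpha)$-ditrail from $u$ to $r$ not using $e$, hence lies in $G$, witnessing $u\in\reachset{G}{r}{-\alpha}{-\alpha}$ and contradicting the hypothesis. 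Therefore $\tilde{e}$ goes from $u$ to $v$, and $P_1$ is a $(\beta,-\alpha)$-ditrail from $x$ to $u$ in $G$.

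Since $H$ is an $\alpha$-radial with root $r$ and $u\in V(H)$, I fix an $(\alpha,-\alpha)$-ditrail $Q$ from $u$ to $r$ in $H$. The concatenation $W := xP_1u\cdot Q$ is a walk from $x$ to $r$ in $G$ whose endpoint signs are $(\beta,-\alpha)$ and which is sign-alternating at every vertex; at the splice point $u$ this holds because $P_1$ arrives with sign $-\alpha$ while $Q$ departs with sign $\alpha$. I would extract from $W$ a $(\beta,-\alpha)$-ditrail $P'$ with $E(P')\subseteq E(P_1)\cup E(Q)$. Since $E(Q)\subseteq E(H)$, no edge of $Q$ lies in $\parcut{G}{H}$, so
\[
E(P')\cap\parcut{G}{H} \;\subseteq\; E(P_1)\cap\parcut{G}{H} \;\subseteq\; E(P)\cap\parcut{G}{H}\setminus\{e\},
\]
which is the desired inclusion (the last containment using $e\notin E(G)$).

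The main obstacle is the walk-to-ditrail extraction itself: when $W$ revisits an edge, naive shortcutting can preserve sign alternation when the two traversals agree in direction but threatens to break it when they go in opposite directions. I would address this by selecting $Q$ to minimize $|E(Q)\cap E(P_1)|$ among all $(\alpha,-\alpha)$-ditrails from $u$ to $r$ in $H$, and then arguing by contradiction that this minimum is zero: at a first shared edge, one can exhibit an alternative ditrail inside $H$ with strictly smaller overlap, contradicting minimality. Lemma~\ref{lem:nobypass} enters at this step to rule out the obstructive ``opposite-direction'' configuration: it forbids $(-\alpha,-\alpha)$-ditrails between sublinear vertices in $G$, and both $u$ and the first reentry vertex of $P_2$ into $V(H)$ are sublinear (the latter because it lies in $\parNei{G}{V(G)\setminus V(H)}$, which by hypothesis misses $\reachset{G}{r}{-\alpha}{-\alpha}$). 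I expect this step to require the most care, while the preceding case-analysis and edge-bookkeeping are essentially formal.
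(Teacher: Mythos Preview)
Your orientation argument for $e$ (that $P$ traverses it from $u$ to $v$, not $v$ to $u$) is correct and matches the paper. The gap is in what follows. Concatenating $P_1 = xPu$ with an $(\alpha,-\alpha)$-ditrail $Q$ from $u$ to $r$ in $H$ need not yield a ditrail, because $P_1$ may share edges with $Q$---for instance when $x\in V(H)$ and the initial segment of $P_1$ runs inside $H$. Your proposed fix, minimizing $|E(Q)\cap E(P_1)|$ and arguing the minimum is zero, is only a sketch, and your invocation of Lemma~\ref{lem:nobypass} is misdirected: you apply it to the pair consisting of $u$ and the first reentry vertex of $P_2$ into $V(H)$, but $P_2$ is the portion of $P$ \emph{after} $e$ and plays no role in the overlap between $P_1$ and $Q$. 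You give no mechanism for producing an edge-disjoint $Q$, and it is not clear one always exists through $u$.

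The paper sidesteps the overlap problem by a case split and a different cut point. If $x\notin V(H)$, it cuts $P$ not at $u$ but at the \emph{first} vertex $y$ of $P$ lying in $V(H)$; then $xPy$ has no internal vertices in $V(H)$, hence (since $H$ is induced) no edges in $E(H)$, so $xPy$ is automatically edge-disjoint from any ditrail of $H$ from $y$ to $r$. Lemma~\ref{lem:nobypass} is used here, but only to certify that $xPy$ ends with sign $-\alpha$ at $y$ (otherwise $yPu$ would be a $(-\alpha,-\alpha)$-ditrail between the sublinear vertices $y$ and $u$). If $x\in V(H)$ and $\beta=\alpha$, one simply takes an $(\alpha,-\alpha)$-ditrail in $H$ from $x$ to $r$, avoiding concatenation altogether. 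If $x\in V(H)$ and $\beta=-\alpha$, the paper takes an $(\alpha,-\alpha)$-ditrail $R$ in $H$ from $x$ (not $u$) to $r$, traces $R^{-1}$ from $r$ to the first vertex $z$ on $xPu$, and splices $xPz + zRr$; edge-disjointness comes from the choice of $z$, and the sign at $z$ is forced because the alternative splice $uP^{-1}z + zRr$ would be a $(-\alpha,-\alpha)$-ditrail from $u$ to $r$, contradicting $u\notin\reachset{G}{r}{-\alpha}{-\alpha}$.
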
 
\begin{proof} 
Let $P$ be a $(\beta, -\alpha)$-ditrail $P$ from $x$ to $r$ with $e\in E(P)$. 
If $P$ contains $(v, e, u)$ as a subditrail, then $uPr$ is a $(-\alpha, -\alpha)$-ditrail from $u$ to $r$, which contradicts $u\not\in \reachset{G}{r}{-\alpha}{-\alpha}$.  
Therefore, $P$ contains $(u, e, v)$. Thus, $xPu$ is a $(\beta, -\alpha)$-ditrail.

First, consider the case $x\in V(G)\setminus V(H)$. 
Trace $P$ from $x$, and let $y$ be the first encountered vertex that is in $V(H)$. 
Note $e\not\in E(xPy)$. 
Note also that  $xPy$ is a $(\beta, -\alpha)$-ditrail; 
for, otherwise, $yPu$ is a $(-\alpha, -\alpha)$-ditrail, which contradicts Lemma~\ref{lem:nobypass}. 
Let $Q$ be an $(\alpha, -\alpha)$-ditrail of $H$ from $y$ to $r$. 
Then, $xPy + Q$ is a desired ditrail $P'$. 

Next, consider the case $x\in V(H)$. 
Let $\beta = \alpha$. Because $H$ is an $\alpha$-radial with root $r$,  
there is clearly an $(\alpha, -\alpha)$-ditrail $P'$ in $G'$ that satisfies the condition. 
Now, let $\beta = -\alpha$.  Then,  $xPu$ is a $(-\alpha, -\alpha)$-ditrail. 
Let $R$ be an $(\alpha, -\alpha)$-ditrail of $H$ from $x$ to $r$. 
Trace $R^{-1}$ from $r$, and let $z$ be the first encountered vertex from $xPu$.  
Because there is no $(-\alpha, -\alpha)$-ditrail between $u$ and $r$, 
it follows that $xPz + zRr$ is a $(-\alpha, -\alpha)$-ditrail from $x$ to $r$. 
This ditrail is a desired ditrail $P'$. 
Thus, the claim is proved for the case $x\in V(H)$. 
This proves the lemma. 
\end{proof}

Lemma~\ref{lem:edgeaddr} can be derived from Lemma~\ref{lem:oneadd}.

\begin{lemma} \label{lem:edgeaddr} 
Let $\alpha \in \{+, -\}$, let $G$ be a bidirected graph, 
and let $H$ be an induced subgraph of $G$ that is an $\alpha$-radial with root $r \in V(H)$. 
 Assume $\parNei{G}{V(G)\setminus V(H)} \cap \reachset{G}{r}{-\alpha}{-\alpha} = \emptyset$. 
Let $G'$ be a bidirected graph obtained from $G$ by adding some edges between $V(H) \setminus \reachset{G}{r}{-\alpha}{-\alpha}$ and $V(G)\setminus V(H)$ 
in which the ends in $V(H)$ have the sign $\alpha$. 
Then, $\reachset{G'}{r}{\beta}{-\alpha} = \reachset{G}{r}{\beta}{-\alpha}$ holds for each $\beta\in\{+, -\}$.  
\end{lemma}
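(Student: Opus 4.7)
The plan is to add the new edges one at a time and repeatedly invoke Lemma~\ref{lem:oneadd}. Enumerate the added edges as $e_1, \ldots, e_k$, and let $G_i$ denote the bidirected graph obtained from $G$ by adding $e_1, \ldots, e_i$, so that $G_0 = G$ and $G_k = G'$. I will prove by induction on $i$ that $\reachset{G_i}{r}{\beta}{-\alpha} = \reachset{G}{r}{\beta}{-\alpha}$ for every $\beta \in \{+, -\}$ simultaneously; the base case $i=0$ is trivial, and taking $i=k$ yields the lemma.

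For the inductive step, the inclusion $\reachset{G}{r}{\beta}{-\alpha} \subseteq \reachset{G_i}{r}{\beta}{-\alpha}$ is immediate since $G_{i-1}$ is a spanning subgraph of $G_i$ and the inductive hypothesis applies to $G_{i-1}$. For the other direction, I apply Lemma~\ref{lem:oneadd} with $G_{i-1}$ in the role of the base graph and $G_i = G_{i-1} + e_i$ in the role of $G'$. Verifying the hypotheses requires three observations: $H$ is still an induced subgraph of $G_{i-1}$, because every added edge crosses $\parcut{G}{V(H)}$ and so never lies in $G_{i-1}[V(H)]$; the end $u$ of $e_i$ lying in $V(H)$ satisfies $u \in V(H)\setminus \reachset{G}{r}{-\alpha}{-\alpha}$, which equals $V(H) \setminus \reachset{G_{i-1}}{r}{-\alpha}{-\alpha}$ by the inductive hypothesis; and $\parNei{G_{i-1}}{V(G_{i-1})\setminus V(H)} \cap \reachset{G_{i-1}}{r}{-\alpha}{-\alpha} = \emptyset$, which follows from the original assumption on $G$, the fact that every endpoint in $V(H)$ of an edge $e_j$ with $j < i$ avoids $\reachset{G}{r}{-\alpha}{-\alpha}$, and the inductive hypothesis applied with $\beta = -\alpha$.

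Granted these hypotheses, given any $(\beta, -\alpha)$-ditrail $P$ from $x$ to $r$ in $G_i$, if $e_i \notin E(P)$ then $P$ is already in $G_{i-1}$. Otherwise Lemma~\ref{lem:oneadd} produces a replacement $(\beta, -\alpha)$-ditrail $P'$ from $x$ to $r$ that does not use $e_i$; indeed the construction in its proof concatenates a prefix of $P$ (ending at the first vertex it shares with $V(H)$, or at the chosen $z$ in the second case) with a suitable $(\alpha, -\alpha)$-ditrail inside $H$, and $e_i$ never appears. Hence $P' \subseteq G_{i-1}$, so $x \in \reachset{G_{i-1}}{r}{\beta}{-\alpha}$, and the inductive hypothesis gives $x \in \reachset{G}{r}{\beta}{-\alpha}$, closing the induction.

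The main delicacy is verifying the cut-avoidance condition at each stage: as edges are added, $\parNei{G_{i-1}}{V(G_{i-1})\setminus V(H)}$ strictly grows, and a priori $\reachset{G_{i-1}}{r}{-\alpha}{-\alpha}$ could grow too, which would invalidate the hypothesis of Lemma~\ref{lem:oneadd}. The simultaneous induction on all $\beta$ is precisely what couples these effects and keeps the condition invariant along the sequence $G_0, G_1, \ldots, G_k$.
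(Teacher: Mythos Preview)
Your proof is correct and follows essentially the same approach as the paper's: both argue by induction on the number of added edges, verify at each stage that the hypotheses of Lemma~\ref{lem:oneadd} persist (using the inductive hypothesis with $\beta=-\alpha$ to control $\reachset{G_{i-1}}{r}{-\alpha}{-\alpha}$), and then apply Lemma~\ref{lem:oneadd} to pass from $G_{i-1}$ to $G_i$. Your write-up is in fact more explicit than the paper's in checking these hypotheses and in noting that the replacement ditrail $P'$ produced by Lemma~\ref{lem:oneadd} actually avoids $e_i$ (a point the paper leaves implicit).
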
 
\begin{proof} 
Let $F := E(G')\setminus E(G)$. That is, $G' = G + F$. 
We prove the lemma by induction on $|F|$. 
If $|F| = 1$, then Lemma~\ref{lem:oneadd} proves the claim. 
Let $|F| > 1$, and assume that the claim holds for every case where $|F|$ is smaller. 
Let $e\in F$, and let $G'' := G + F\setminus \{e\}$.  Hence, $G' = G'' + e$. 
From the induction hypothesis, 
$\reachset{G''}{r}{\beta}{-\alpha} = \reachset{G}{r}{\beta}{-\alpha}$ holds for each $\beta\in\{+, -\}$.   
Thus, $H$ is an induced subgraph of $G''$ that is an $\alpha$-radial with root $r \in V(H)$ 
such that $\parNei{G''}{V(G'') \setminus V(H)} \cap \reachset{G''}{r}{-\alpha}{-\alpha} = \emptyset$, 
and the end of $e$ from $V(H)$ is disjoint from $\reachset{G}{r}{-\alpha}{-\alpha}$. 
Therefore, Lemma~\ref{lem:oneadd} implies that 
$\reachset{G'}{r}{\beta}{-\alpha} = \reachset{G''}{r}{\beta}{-\alpha}$ holds for each $\beta\in\{+, -\}$.    
Consequently, $\reachset{G'}{r}{\beta}{-\alpha} = \reachset{G}{r}{\beta}{-\alpha}$ is obtained for each $\beta\in\{+, -\}$.  
This proves the lemma. 
\end{proof}

It can easily be confirmed from  similar discussions that 
the semiradial versions of Lemmas~\ref{lem:oneadd} and \ref{lem:edgeaddr} also hold. 
As such, Lemma~\ref{lem:edgeaddsr} is obtained.

\begin{lemma}  \label{lem:edgeaddsr} 
Let $\alpha \in \{+, -\}$, let $G$ be a bidirected graph, 
and let $H$ be an induced subgraph of $G$ that is an $\alpha$-semiradial with root $r \in V(H)$.  
Assume $\parNei{G}{V(G)\setminus V(H)} \cap \reachsets{G}{r}{-\alpha} = \emptyset$. 
Let $G'$ be a bidirected graph obtained from $G$ by adding some edges between $V(H) \setminus \reachsets{G}{r}{-\alpha}$ and $V(G)\setminus V(H)$ 
in which the ends in $V(H)$ have the sign $\alpha$. 
Then, $\reachsets{G'}{r}{\beta} = \reachsets{G}{r}{\beta}$ holds for each $\beta\in\{+, -\}$.  
\end{lemma}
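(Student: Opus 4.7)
The plan is to imitate the proof of Lemma~\ref{lem:edgeaddr} step by step, replacing each $(\cdot,-\alpha)$-ditrail by an unrestricted $\cdot$-ditrail, each occurrence of $\reachset{G}{r}{\cdot}{-\alpha}$ by $\reachsets{G}{r}{\cdot}$, and using the second (semiradial) clause of Lemma~\ref{lem:nobypass} in place of the first. I would first state and prove a semiradial analogue of Lemma~\ref{lem:oneadd} for a single added edge, and then apply induction on $|F| := |E(G') \setminus E(G)|$ exactly as before.

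For the one-edge claim, let $e$ be added between $u \in V(H) \setminus \reachsets{G}{r}{-\alpha}$ and $v \in V(G) \setminus V(H)$ with sign $\alpha$ at $u$, and let $P$ be a $\beta$-ditrail in $G' = G + e$ from some $x \in V(G)$ to $r$ using $e$. I first argue that $P$ traverses $e$ as $(u, e, v)$; otherwise $uPr$ is a $-\alpha$-ditrail, contradicting $u \notin \reachsets{G}{r}{-\alpha}$. For $x \in V(G) \setminus V(H)$, let $y$ be the first vertex of $V(H)$ along $P$: the hypothesis gives $y \notin \reachsets{G}{r}{-\alpha}$ (since $y \in \parNei{G}{V(G) \setminus V(H)}$), and since $H$ is an $\alpha$-semiradial, $y, u \in \reachsets{G}{r}{\alpha}$; the second clause of Lemma~\ref{lem:nobypass} then rules out a $(-\alpha,-\alpha)$-ditrail from $y$ to $u$, forcing $xPy$ to be a $(\beta, -\alpha)$-ditrail, and $P' := xPy + Q$ works for any $\alpha$-ditrail $Q$ of $H$ from $y$ to $r$. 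For $x \in V(H)$, the subcase $\beta = \alpha$ is immediate; for $\beta = -\alpha$, take an $\alpha$-ditrail $R$ of $H$ from $x$ to $r$, trace $R^{-1}$ from $r$ to the first vertex $z$ on $xPu$, and set $P' := xPz + zRr$.

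For the induction, pick $e \in F$ and set $G'' := G + F \setminus \{e\}$. The induction hypothesis gives $\reachsets{G''}{r}{\beta} = \reachsets{G}{r}{\beta}$ for both $\beta$. The disjointness hypothesis $\parNei{G''}{V(G'') \setminus V(H)} \cap \reachsets{G''}{r}{-\alpha} = \emptyset$ persists in $G''$: the extra neighbors of $V(G'') \setminus V(H)$ introduced by $F \setminus \{e\}$ lie in $V(H) \setminus \reachsets{G}{r}{-\alpha}$ by the global assumption on $F$, and the induction hypothesis ensures $\reachsets{G''}{r}{-\alpha}$ coincides with $\reachsets{G}{r}{-\alpha}$. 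Applying the one-edge claim to $G'' + e = G'$ completes the step.

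The main obstacle is the sign-matching step at $z$ in the subcase $x \in V(H)$, $\beta = -\alpha$. One must check that, if the signs at $z$ fail to match for the intended concatenation $xPz + zRr$, the alternative gluing $(zPu)^{-1} + zRr$ is a valid ditrail from $u$ to $r$ starting at $u$ with sign $-\alpha$ (inherited from the last edge of $xPu$, opposite to the sign $\alpha$ of $e$ at $u$), contradicting $u \notin \reachsets{G}{r}{-\alpha}$. Unlike the radial version, where the forbidden ditrail must end at $r$ with sign $-\alpha$, here any ending sign yields the same contradiction, so the weaker semiradial hypothesis suffices.
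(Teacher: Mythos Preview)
Your proposal is correct and follows exactly the approach the paper intends: the paper does not spell out a separate argument for Lemma~\ref{lem:edgeaddsr} but simply asserts that the semiradial versions of Lemmas~\ref{lem:oneadd} and~\ref{lem:edgeaddr} hold by the same discussion, which is precisely what you carry out in detail. Your handling of the sign-matching step at $z$ via the alternative gluing $(zPu)^{-1} + zRr$ and the observation that the weaker condition $u \notin \reachsets{G}{r}{-\alpha}$ already yields the needed contradiction is exactly the adaptation the paper has in mind.
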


\subsection{Edge Deleting Lemmas}  \label{sec:edgelem:del} 

In Section~\ref{sec:edgelem:del}, we provide Lemmas~\ref{lem:r2delete} and \ref{lem:sr2delete} to be used in later sections. 
Lemma~\ref{lem:edgeaddr} easily implies Lemma~\ref{lem:r2delete}.

\begin{lemma} \label{lem:r2delete} 
Let $\alpha \in \{+, -\}$, let $G$ be a bidirected graph, 
and let $H$ be an induced subgraph of $G$ that is an $\alpha$-radial with root $r \in V(H)$ 
such that $\parNei{G}{V(G)\setminus V(H)} \cap \reachset{G}{r}{-\alpha}{-\alpha} = \emptyset$. 
Let $F \subseteq \ocut{G}{H}{\alpha}$. 
Then,  $\reachset{G - F}{r}{\beta}{-\alpha} = \reachset{G}{r}{\beta}{-\alpha}$ holds for each $\beta\in\{+, -\}$.  
\end{lemma}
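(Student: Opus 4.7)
The plan is to derive the statement by applying Lemma~\ref{lem:edgeaddr} in reverse: view $G$ as the graph obtained from $G' := G - F$ by adding back the edges of $F$. Since $G = G' + F$, proving $\reachset{G}{r}{\beta}{-\alpha} = \reachset{G'}{r}{\beta}{-\alpha}$ for each $\beta \in \{+, -\}$ is exactly the conclusion of Lemma~\ref{lem:edgeaddr} applied to $G'$ and $F$. So the task reduces to verifying the three hypotheses of that lemma for $G'$.

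First, $H$ is an induced subgraph of $G'$ that is an $\alpha$-radial with root $r$: since $F \subseteq \ocut{G}{H}{\alpha} \subseteq \parcut{G}{V(H)}$, no edge of $F$ lies inside $V(H)$, so $G'[V(H)] = G[V(H)] = H$, and being an $\alpha$-radial is a property of $H$ alone. Second, the assumption $\parNei{G}{V(G)\setminus V(H)} \cap \reachset{G}{r}{-\alpha}{-\alpha} = \emptyset$ passes to $G'$ by monotonicity: both $\parNei{G'}{V(G')\setminus V(H)} \subseteq \parNei{G}{V(G)\setminus V(H)}$ and $\reachset{G'}{r}{-\alpha}{-\alpha} \subseteq \reachset{G}{r}{-\alpha}{-\alpha}$ hold because $G'$ is obtained from $G$ by edge deletion.

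Third, the edges of $F$ go between $V(H) \setminus \reachset{G'}{r}{-\alpha}{-\alpha}$ and $V(G')\setminus V(H)$, with the $V(H)$-ends having sign $\alpha$. The sign condition is by definition of $F \subseteq \ocut{G}{H}{\alpha}$. For the placement of the endpoints, let $e \in F$ join $u \in V(H)$ with $v \in V(G) \setminus V(H)$. Then $u \in \parNei{G}{V(G) \setminus V(H)}$, so by the given hypothesis $u \notin \reachset{G}{r}{-\alpha}{-\alpha}$, and hence $u \notin \reachset{G'}{r}{-\alpha}{-\alpha}$ by the same monotonicity as above.

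With all hypotheses verified, Lemma~\ref{lem:edgeaddr} applied to $G'$ and the edge set $F$ yields $\reachset{G' + F}{r}{\beta}{-\alpha} = \reachset{G'}{r}{\beta}{-\alpha}$, i.e.\ $\reachset{G}{r}{\beta}{-\alpha} = \reachset{G - F}{r}{\beta}{-\alpha}$, for each $\beta \in \{+, -\}$. There is no real obstacle here; the only thing to be careful about is ensuring that the monotonicity step is cited in the correct direction so that the hypothesis on $u$ transfers from $G$ to $G'$ rather than the other way around.
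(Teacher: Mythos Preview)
Your proof is correct and follows essentially the same approach as the paper: set $G' := G - F$, verify the hypotheses of Lemma~\ref{lem:edgeaddr} for $G'$ via monotonicity of $\reachset{\cdot}{r}{-\alpha}{-\alpha}$ under edge deletion, and conclude. The paper's version is terser (it leaves the check that the $V(H)$-end of each $e\in F$ lies outside $\reachset{G'}{r}{-\alpha}{-\alpha}$ implicit), but the argument is the same.
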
 
\begin{proof} 
Let $G' := G - F$. That is, $G = G' + F$.  
It is clear that $H$ is an induced subgraph of $G'$  that is an $\alpha$-radial with root $r$. 
Because $\reachset{G'}{r}{-\alpha}{-\alpha}  \subseteq \reachset{G}{r}{-\alpha}{-\alpha}$ clearly holds,  
we have $\parNei{G'}{V(G')\setminus V(H)} \cap \reachset{G'}{r}{-\alpha}{-\alpha} = \emptyset$.   
Hence, Lemma~\ref{lem:edgeaddr} implies the claim. 
\end{proof}

It is easily observed that the semiradial version of Lemma~\ref{lem:r2delete} also holds,  
which is stated as Lemma~\ref{lem:sr2delete}.

\begin{lemma} \label{lem:sr2delete} 
Let $\alpha \in \{+, -\}$, let $G$ be a bidirected graph,  
and let $H$ be an induced subgraph of $G$ that is an $\alpha$-semiradial with root $r$ 
such that $\parNei{G}{V(G)\setminus V(H)} \cap \reachsets{G}{r}{-\alpha} = \emptyset$. 
 Let $F \subseteq \ocut{G}{H}{\alpha}$. 
Then,  $\reachsets{G -F}{r}{\beta} = \reachsets{G}{r}{\beta}$ holds for each $\beta\in\{+, -\}$.  
\end{lemma}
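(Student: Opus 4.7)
The plan is to mirror the proof of Lemma~\ref{lem:r2delete} exactly, substituting the semiradial-level ingredients: use Lemma~\ref{lem:edgeaddsr} in place of Lemma~\ref{lem:edgeaddr}, and replace $\reachset{\cdot}{r}{-\alpha}{-\alpha}$ by $\reachsets{\cdot}{r}{-\alpha}$ throughout. Set $G' := G - F$, so that $G = G' + F$, and aim to apply Lemma~\ref{lem:edgeaddsr} to the pair $(G', F)$ in order to conclude $\reachsets{G}{r}{\beta} = \reachsets{G'}{r}{\beta}$ for each $\beta \in \{+,-\}$.

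First I would check that $H$ is still an induced subgraph of $G'$ that is an $\alpha$-semiradial with root $r$. Since $F \subseteq \ocut{G}{H}{\alpha}$, every edge in $F$ has exactly one endpoint in $V(H)$, so no edge of the induced subgraph $H$ is deleted; hence $H$ is unchanged as a bidirected subgraph and remains an $\alpha$-semiradial with root $r$ in $G'$. Next I would record the monotonicity $\reachsets{G'}{r}{\beta} \subseteq \reachsets{G}{r}{\beta}$ for each $\beta$, which is immediate because every ditrail in $G'$ is a ditrail in $G$.

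Using this monotonicity together with the hypothesis on $G$, I obtain $\parNei{G'}{V(G')\setminus V(H)} \cap \reachsets{G'}{r}{-\alpha} = \emptyset$, which is the neighborhood assumption required by Lemma~\ref{lem:edgeaddsr}. For each $e\in F$, the end of $e$ lying in $V(H)$ belongs to $\parNei{G}{V(G)\setminus V(H)}$, so by hypothesis it is not in $\reachsets{G}{r}{-\alpha}$, and by monotonicity it is not in $\reachsets{G'}{r}{-\alpha}$ either; moreover its sign is $\alpha$ since $F \subseteq \ocut{G}{H}{\alpha}$. Thus $F$ is an admissible set of added edges for Lemma~\ref{lem:edgeaddsr} applied to $G'$. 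Invoking that lemma on $G = G' + F$ yields $\reachsets{G}{r}{\beta} = \reachsets{G'}{r}{\beta}$ for each $\beta \in \{+,-\}$, as desired.

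There is no serious obstacle: the argument is a purely formal transcription of the radial proof, and the only care needed is to verify, in the order above, the two hypotheses of Lemma~\ref{lem:edgeaddsr} (the emptiness of the neighborhood intersection in $G'$, and the membership of the $V(H)$-ends of the added edges in $V(H)\setminus \reachsets{G'}{r}{-\alpha}$), both of which follow from monotonicity together with the assumed emptiness in $G$.
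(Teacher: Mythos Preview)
Your proposal is correct and follows exactly the approach the paper intends: the paper states Lemma~\ref{lem:sr2delete} without proof, merely remarking that the semiradial version of Lemma~\ref{lem:r2delete} holds by the same argument. Your transcription of that argument---setting $G':=G-F$, noting monotonicity $\reachsets{G'}{r}{-\alpha}\subseteq\reachsets{G}{r}{-\alpha}$, verifying the hypotheses of Lemma~\ref{lem:edgeaddsr} for the pair $(G',F)$, and concluding $\reachsets{G}{r}{\beta}=\reachsets{G'}{r}{\beta}$---is precisely what is required, and in fact is slightly more detailed than the paper's own proof of Lemma~\ref{lem:r2delete}.
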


\section{Neighbor Lemma}

\begin{lemma} \label{lem:neigh2ear} 
Let $G$ be a bidirected graph, let $r\in V(G)$, 
and let $S$ be a set of vertices with $\{r\} \subseteq S \subsetneq V(G)$. 
Let $x\in V(G)\setminus S$ and $y \in S$ be adjacent vertices,  
and let $\beta \in \{+, -\}$ be the sign of $x$ over $xy$.  
If $G$ has a $-\beta$-ditrail from $x$ to $r$,  then there is a diear relative to $S$  that is either 
\begin{rmenum} 
\item a simple diear that starts with subditrail $(y, yx, x)$ or ends with subditrail $(x, xy, y)$, or 
\item a scoop diear whose grip is $xy$. 
\end{rmenum} 
\end{lemma}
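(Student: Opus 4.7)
The plan is to take the given $-\beta$-ditrail from $x$ to $r$, chop it at its first entry into $S$, and prepend the edge $xy$ traversed from $y$ to $x$, using the fact that the signs at $x$ force the concatenation to be legitimate.

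More concretely, let $P$ be the supplied $-\beta$-ditrail from $x$ to $r$. Because $r\in S$ while $x\notin S$, I would let $z$ be the first vertex of $P$ (starting from $x$) that lies in $S$; then $xPz$ is a subditrail of $P$ whose internal vertices all lie in $V(G)\setminus S$. The sign of $x$ on the first edge of $xPz$ is $-\beta$ since $P$ is $-\beta$-initial at $x$, while by hypothesis the sign of $x$ on $xy$ is $\beta$. Hence at $x$ the two signs are opposite, so prepending the traversal $(y, yx, x)$ to $xPz$ yields a legitimate ditrail $W$ from $y$ to $z$; the edge $xy$ is used only once because $y\in S$ cannot occur as an internal vertex of $xPz$.

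If $y\neq z$, then $W$ has two distinct endpoints $y,z\in S$, its internal vertices lie in $V(G)\setminus S$, and it starts with the subditrail $(y, yx, x)$, so $W$ is a simple diear relative to $S$ fulfilling case (i). If $y=z$, then $W$ is a closed ditrail based at $y\in S$ whose only $S$-vertex is $y$ and whose initial edge is $xy$, which is precisely the scoop diear with grip $xy$ demanded in case (ii).

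The main obstacle is just the bookkeeping verifying that the prepended edge and the subditrail glue correctly at $x$, namely sign-compatibility and the fact that $xy$ is not already traversed inside $xPz$; both follow immediately from the choice of $z$ as the first $S$-vertex on $P$ and from $P$ being a $-\beta$-ditrail at $x$. No further structural argument appears necessary—the entire proof is a single first-entry surgery on $P$ followed by a case split on whether $y=z$.
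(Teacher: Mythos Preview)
Your overall strategy matches the paper's exactly: take the first $S$-vertex $z$ along $P$ and prepend $(y,yx,x)$ to $xPz$. The gap is in your bookkeeping claim that ``$xy$ is used only once because $y\in S$ cannot occur as an internal vertex of $xPz$.'' This only rules out $y$ appearing internally; it does not rule out $y=z$ with $xy$ as the \emph{last} edge of $xPz$, in which case your $W$ traverses $xy$ twice and is not a ditrail at all.

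Consequently, the right dichotomy is not ``$y=z$ versus $y\neq z$'' but whether $xy\in E(xPz)$, which is the split the paper uses. If $xy\notin E(xPz)$ (which can happen even when $y=z$, via a different incoming edge at $y$), then $(y,yx,x)+xPz$ is a genuine ditrail and hence a simple diear starting with $(y,yx,x)$. If $xy\in E(xPz)$, then necessarily $z=y$ and $xPz$ ends with $(x,xy,y)$; here $(y,yx,x)+xPz$ is the scoop diear with grip $xy$, a figure that by definition traverses its grip edge twice. Your sub-case ``$y=z$ with last edge $\neq xy$'' is thus misclassified (it is simple, not scoop), and your sub-case ``$y=z$ with last edge $=xy$'' is correctly a scoop but contradicts your earlier ``used only once'' assertion. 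The fix is exactly to swap your case split for the paper's.
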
 
\begin{proof} %lem:nei2ear
Let $P$ be a $-\beta$-ditrail from $x$ to $r$. 
Trace $P$ from $x$, and let $z$ be the first encountered vertex in $S$. 
If $xPz$ does not contain $xy$, then $(y, yx, x) + xPz$ is a simple diear relative to $S$. 
If $xPz$ contains $xy$, then $xPz$ ends with the sequence $(x, xy, y)$; 
this further implies that $(y, yx, x) + xPz$ forms a scoop diear relative to $S$ whose grip is $xy$. 
\end{proof}

\section{Grounds for Semiradials}  

In this section, we introduce the concepts of absolute and linear grounds for semiradials 
and provide some properties to be used in later sections. 
Lemma~\ref{lem:sr2union} can easily be confirmed. 

\begin{lemma}\label{lem:sr2union}  
Let $\alpha\in\{+, -\}$. 
Let $G$ be a bidirected graph, and let $r\in V(G)$. 
Let $H_1$ and $H_2$ be subgraphs of $G$ that are $\alpha$-semiradials with root $r$.  
Then, $H_1 + H_2$ is also an $\alpha$-semiradial with root $r$. 
Furthermore, 
\begin{rmenum} 
\item if $H_1$ and $H_2$ are both absolute, then $H_1 + H_2$ is also absolute; and, 
\item if all vertices of $H_1$ and $H_2$ except $r$ are $\alpha$-linear in $G$ with respect to $r$,  which implies $H_1$ and $H_2$ are both linear, 
then $H_1 + H_2$ is also linear. 
\end{rmenum} 
\end{lemma}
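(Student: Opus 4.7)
The plan is to exploit the monotonicity of ``can reach by a ditrail of a given sign pattern'' under both taking subgraphs (from $H_1+H_2$ into $G$) and taking supergraphs (from $H_i$ into $H_1+H_2$). All three parts follow from this together with the fact that a ditrail contained in $E(H_i)$ is also a ditrail in $H_1+H_2$ and, in turn, in $G$.

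For the first assertion, I take an arbitrary vertex $v \in V(H_1 + H_2) = V(H_1)\cup V(H_2)$; without loss of generality $v\in V(H_1)$. Since $H_1$ is an $\alpha$-semiradial with root $r$, there is an $\alpha$-ditrail from $v$ to $r$ in $H_1$, which, viewed inside $H_1 + H_2$, witnesses $v \in \reachsets{H_1+H_2}{r}{\alpha}$. Hence $H_1 + H_2$ is an $\alpha$-semiradial with root $r$. Part (i) is immediate by the same argument: every non-root vertex $v$ lies in some $V(H_i)$, and by the absoluteness of $H_i$ it reaches $r$ by both an $\alpha$-ditrail and a $-\alpha$-ditrail inside $H_i$, both of which survive in $H_1 + H_2$.

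The only part requiring a slightly different idea is (ii), where the hypothesis on linearity is made in $G$, not merely in the individual $H_i$. I would argue by contradiction: if $H_1 + H_2$ were not linear, then some non-root vertex $v\in V(H_1+H_2)$ would satisfy $v\in \reachsets{H_1+H_2}{r}{-\alpha}$. Since $H_1 + H_2$ is a subgraph of $G$, the witnessing $-\alpha$-ditrail lives in $G$, so $v \in \reachsets{G}{r}{-\alpha}$. Combined with $v\in\reachsets{G}{r}{\alpha}$ (obtained from the first part and the inclusion $H_1+H_2\subseteq G$), this contradicts the assumption that $v$ is $\alpha$-linear in $G$.

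The argument has no real obstacle; the subtlety, such as it is, lies in (ii) and in noticing why the hypothesis is phrased in terms of linearity in $G$ rather than linearity in $H_1$ and $H_2$ separately. If only the latter were assumed, an $H_1$-piece and an $H_2$-piece could potentially be glued into a $-\alpha$-ditrail in $H_1+H_2$, and the conclusion would fail; strengthening the hypothesis to $\alpha$-linearity in the ambient graph $G$ is exactly what rules this out, since $H_1+H_2\subseteq G$ prevents the union from introducing any new $-\alpha$-reachability beyond what already exists in $G$.
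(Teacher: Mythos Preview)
The paper does not actually prove this lemma; it merely states that it ``can easily be confirmed.'' Your argument is the natural one and is essentially what the authors have in mind.

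There is one small gap in your treatment of (ii). You write that if $H_1+H_2$ were not linear, then some \emph{non-root} vertex $v$ would lie in $\reachsets{H_1+H_2}{r}{-\alpha}$. But by the definition in the Appendix, a linear $\alpha$-semiradial must also have no loop over $r$ and no \emph{non-trivial} $-\alpha$-ditrail from $r$ itself to $r$; your case split does not cover these. Both are easy to dispatch. A loop over $r$ in $H_1+H_2$ would already lie in some $H_i$, contradicting that $H_i$ is linear (as the parenthetical hypothesis asserts). And if $P$ is a non-trivial $-\alpha$-ditrail from $r$ to $r$ in $H_1+H_2$ whose first edge $e$ is not a loop, let $w\neq r$ be the other end of $e$ and let $\gamma$ be the sign of $w$ on $e$: if $\gamma=-\alpha$ then the single edge $(w,e,r)$ is a $-\alpha$-ditrail from $w$ to $r$, while if $\gamma=\alpha$ then $wPr$ is a $-\alpha$-ditrail from $w$ to $r$; either way $w\in\reachsets{G}{r}{-\alpha}$, contradicting the $\alpha$-linearity of $w$ in $G$. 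With these two remarks added, your proof is complete.
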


Under Lemma~\ref{lem:sr2union}, absolute and linear grounds for semiradials can be defined.

\begin{definition} 
Let $\alpha\in\{+, -\}$. 
Let $G$ be a bidirected graph, and let $r\in V(G)$.  
The maximum  subgraph of $G$ that is an absolute $\alpha$-semiradial with root $r$ is called the {\em absolute ground}.   
By contrast, 
the {\em linear ground} of $G$ 
is the maximum  subgraph that is a linear $\alpha$-semiradial with root $r$  
in which every vertex except the root is linear in $G$. 
\end{definition}

Every $\alpha$-semiradial has absolute and linear grounds 
because $r$ induces a subgraph that is trivially an absolute or linear $\alpha$-semiradial. 
An absolute or linear ground is said to be {\em trivial} if it comprises only one vertex, that is, the root, and no edge.  
The intersection of the absolute and linear grounds comprises only one vertex, that is, the root.

\begin{lemma} \label{lem:rnei2alt} 
Let $\alpha\in\{+, -\}$. 
Let $G$ be an $\alpha$-semiradial with root $r\in V(G)$. 
Then, every neighbor of $r$ is contained in either the absolute or linear grounds of $G$. 
If $v\in \parNei{G}{r}$ is linear in $G$, then it is contained in the linear ground of $G$. 
Otherwise, that is, if $v$ is not linear in $G$, then it is contained in the absolute ground of $G$. 
\end{lemma}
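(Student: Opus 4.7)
The plan is to use the dichotomy that, since $G$ is an $\alpha$-semiradial with root $r$, every vertex lies in $\reachsets{G}{r}{\alpha}$, so each neighbor $v$ of $r$ is either $\alpha$-linear or absolute in $G$. In each case I would exhibit an explicit witnessing subgraph of the required type containing $v$, and then appeal to Lemma~\ref{lem:sr2union} together with the maximality of the corresponding ground.

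Suppose first that $v$ is $\alpha$-linear in $G$. The first step is to observe that the sign of $v$ over the edge $rv$ must equal $\alpha$: otherwise $(v, rv, r)$ would itself be a $(-\alpha)$-ditrail from $v$ to $r$, contradicting $v \notin \reachsets{G}{r}{-\alpha}$. Then the subgraph $H_0$ on $\{r,v\}$ with single edge $rv$ is a linear $\alpha$-semiradial with root $r$, and its only non-root vertex $v$ is $\alpha$-linear in $G$. Lemma~\ref{lem:sr2union}(ii) and the maximality of the linear ground therefore place $v$ in the linear ground.

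Suppose instead that $v$ is absolute, and let $\beta$ denote the sign of $v$ over $rv$, so $(v, rv, r)$ is a $\beta$-ditrail in $G$. Absoluteness of $v$ furnishes a $(-\beta)$-ditrail $P = (v_0, e_1, v_1, \ldots, e_k, v_k)$ from $v = v_0$ to $r = v_k$ in $G$, which I would truncate at its first visit to $r$ so that $r$ appears in $P$ only at position $k$. Let $H_0$ be the subgraph with vertex set $V(P)$ and edge set $E(P) \cup \{rv\}$. For each internal vertex $u = v_i$, writing $\gamma$ for the sign of $v_i$ over $e_{i+1}$, the subditrail $v_i P r$ is a $\gamma$-ditrail, while the concatenation $v_i P^{-1} v + (v, rv, r)$ is a $(-\gamma)$-ditrail: the ditrail condition at $v$ holds because the sign of $v$ over $e_1$ is $-\beta \ne \beta$, and edge-disjointness holds because the truncation forces any occurrence of $rv$ in $P$ to be the edge $e_k$, which lies outside $\{e_1,\ldots,e_i\}$ for $i < k$. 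Hence every vertex of $H_0$ reaches $r$ by ditrails of both initial signs, so $H_0$ is an absolute $\alpha$-semiradial containing $v$; Lemma~\ref{lem:sr2union}(i) and the maximality of the absolute ground then deliver $v$ into the absolute ground.

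The main obstacle is the edge-disjointness bookkeeping for the two ditrails from an internal vertex in the absolute case when $rv$ already appears on $P$; truncating $P$ at its first visit to $r$ is the device that confines this to the single sub-case $rv = e_k$, which is then handled by a direct check, with the coincidence $u = v = v_{k-1}$ folded into the already-resolved base case at $v$.
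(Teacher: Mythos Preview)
Your argument is correct. The paper states this lemma without proof, evidently regarding it as a direct consequence of the definitions and Lemma~\ref{lem:sr2union}; your proposal supplies exactly those routine details, exhibiting in each case a small witnessing subgraph (the single edge $rv$ in the linear case, and the ditrail $P$ together with $rv$ in the absolute case) and invoking maximality of the relevant ground via Lemma~\ref{lem:sr2union}.
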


Absolute ground of an $\alpha$-semiradial is trivial if and only if every neighbor of $r$ is linear. 
Linear ground of an $\alpha$-semiradial is trivial if and only if no neighbor of $r$ is linear. 

\begin{definition} 
Let $\alpha\in\{+, -\}$. 
We say that an $\alpha$-semiradial with root $r$ is {\em sharp}  if 
every neighbor of $r$ is linear. 
\end{definition} 

That is, by Lemma~\ref{lem:rnei2alt},  
an $\alpha$-semiradial with root $r$ is sharp if and only if its absolute ground is trivial. 
We use this fact everywhere in this paper sometimes without explicitly mentioning it.

\begin{lemma} \label{lem:abgnei2lin} 
Let $\alpha\in\{+, -\}$. 
Let $G$ be an $\alpha$-semiradial with root $r\in V(G)$. 
Let $H$ be the absolute ground of $G$. 
Then, every neighbor of $H$ is linear in $G$. 
\end{lemma}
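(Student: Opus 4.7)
The plan is to argue by contradiction, exploiting the maximality of the absolute ground guaranteed by Lemma~\ref{lem:sr2union}. Suppose some $x\in V(G)\setminus V(H)$ is adjacent to a vertex $v\in V(H)$ via an edge $e$ yet is not linear. Since $G$ is an $\alpha$-semiradial, $x\in\reachsets{G}{r}{\alpha}$, so being non-linear forces $x\in\reachsets{G}{r}{-\alpha}$; hence $x$ is absolute in $G$. Let $\beta$ be the sign of $x$ on $e$ and $\gamma$ the sign of $v$ on $e$. Choose a $-\beta$-ditrail $P$ from $x$ to $r$ in $G$, and let $w$ be the first vertex of $V(H)$ encountered on $P$ from $x$ (which exists because $r\in V(H)$). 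Set $H^{*}:=H+xPw+e$. Since $x\in V(H^{*})\setminus V(H)$, showing that $H^{*}$ is an absolute $\alpha$-semiradial with root $r$ will contradict the maximality of $H$.

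The key preliminary observation is edge-disjointness. Since $x\notin V(H)$ and, by the choice of $w$, no internal vertex of $xPw$ lies in $V(H)$, every edge of $xPw$ together with $e$ has at least one endpoint outside $V(H)$, so these edges are disjoint from $E(H)$. Moreover, $e$ differs from the first edge $e_{1}$ of $P$, because those two edges carry opposite signs at $x$ (namely $\beta$ and $-\beta$).

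To verify that $H^{*}$ is absolute I would exhibit, for each vertex of $V(H^{*})$, both an $\alpha$- and a $-\alpha$-ditrail to $r$ inside $H^{*}$. Vertices of $V(H)$ are absolute in $H$, hence in $H^{*}$. For $x$, the edge $e$ followed by a $(-\gamma,\cdot)$-ditrail from $v$ to $r$ inside $H$ is a $\beta$-ditrail, while $xPw$ followed by a suitable ditrail from $w$ in $H$ is a $-\beta$-ditrail, and together these cover both signs. For an internal vertex $u$ of $xPw$, let $\epsilon$ be the sign of $u$ on the first edge of $uPw$; then $uPw$ extended by a ditrail from $w$ in $H$ yields an $\epsilon$-ditrail to $r$, while $uP^{-1}x$ (which ends at $x$ with sign $-\beta$) concatenated with $e$ (which starts at $x$ with sign $\beta$) followed by a $(-\gamma,\cdot)$-ditrail from $v$ in $H$ yields a $-\epsilon$-ditrail to $r$. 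The required ditrails out of $v$ and $w$ in $H$ exist because every vertex of $H$ is absolute in $H$.

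The main obstacle is the bookkeeping of signs together with the verification that each concatenated walk is a legitimate ditrail with no repeated edges. This reduces cleanly to the edge partition $E(H^{*})=E(H)\cup E(xPw)\cup\{e\}$ recorded above and the fact that $e\neq e_{1}$, so edges borrowed from $E(H)$ cannot clash with edges coming from $P$ or with $e$, and the two traversals $uPw$ and $uP^{-1}x$ of $P$ use disjoint sets of edges of $P$.
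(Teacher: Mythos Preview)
Your argument is correct and follows the same strategy as the paper's: derive a contradiction by exhibiting an absolute semiradial subgraph of $G$ with root $r$ that strictly contains $H$. The only difference is packaging: the paper invokes Lemma~\ref{lem:neigh2ear} to obtain a diear relative to $H$ from the edge $e$ and a $-\beta$-ditrail out of $x$, and then appeals to Theorem~\ref{thm:asr} (the ear-characterization of absolute semiradials) to conclude that $H$ plus this diear is again absolute, whereas you perform both of these steps by hand, directly verifying that every vertex of $H^{*}=H+xPw+e$ has $\alpha$- and $-\alpha$-ditrails to $r$.

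One small remark on your bookkeeping: the observation $e\neq e_{1}$ alone does not quite rule out $e\in E(xPu)$, since $x$ could in principle reappear on $xPw$. The clean reason that $e\notin E(xPu)$ for an internal $u$ is that the endpoint $v\in V(H)$ of $e$ cannot occur on $xPu$ at all (by the choice of $w$ as the first vertex of $V(H)$ along $P$); this also shows that if $e\in E(xPw)$ then necessarily $v=w$ and $e$ is the last edge of $xPw$, so your three-part decomposition of $E(H^{*})$ may overlap only in that harmless way. With this adjustment your edge-disjointness checks all go through.
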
 
\begin{proof}  
Suppose that $x\in\parNei{G}{H}$ is absolute in $G$. 
Then, Lemma~\ref{lem:neigh2ear}  implies that there is a diear relative to $H$.  
By Theorem~\ref{thm:asr},  this contradicts the maximality of $H$. 
Thus, the lemma is proved. 
\end{proof}

\begin{lemma} \label{lem:lg2neigh}
Let $\alpha\in\{+, -\}$, and let $G$ be a sharp $\alpha$-semiradial with root $r\in V(G)$.  
Let $H$ be the linear ground of $G$. 
If $V(G)\setminus V(H) \neq \emptyset$, then the following hold:  
\begin{rmenum} 
\item \label{item:noear}
There is no simple $(-\alpha, -\alpha)$-diear relative to $H$. 
\item \label{item:scoop} 
$\ocut{G}{H}{-\alpha}\neq \emptyset$.  
For each $e \in \ocut{G}{H}{-\alpha}$,  the end $u \in V(G)\setminus V(H)$ of $e$ is absolute in $G$, and 
there is a $-\alpha$-scoop diear relative to $H$ whose grip is $e$. 

\end{rmenum} 
\end{lemma}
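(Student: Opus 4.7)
The plan is to prove (i) by contradiction and then establish the three sub-assertions of (ii) in turn, repeatedly exploiting sharpness together with the maximality of the linear ground. A preliminary observation used throughout is that sharpness, Lemma~\ref{lem:rnei2alt}, and the maximality of $H$ together force every neighbor of $r$ in $G$ to lie in $V(H)$: a non-linear neighbor is ruled out by sharpness, while an $\alpha$-linear neighbor of $r$ in $V(G)\setminus V(H)$ could be adjoined to $H$ together with its edge to $r$ to give a strictly larger linear $\alpha$-semiradial, contradicting maximality.

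For (i), suppose for contradiction that $P$ is a simple $(-\alpha,-\alpha)$-diear from $y$ to $z$ relative to $H$. Because the internal vertices of $P$ lie in $V(G)\setminus V(H)$ and no such vertex is a neighbor of $r$, neither endpoint $y$ nor $z$ equals $r$, so both are linear in $G$. Take any $\alpha$-ditrail $Q$ from $z$ to $r$ in $H$; the concatenation $P + Q$ is a valid ditrail because the signs match at $z$ ($-\alpha$ incoming, $\alpha$ outgoing) and the edges of $P$ (cut edges or edges between vertices outside $V(H)$) are disjoint from $E(H)$. This yields a $-\alpha$-ditrail from $y$ to $r$ in $G$, contradicting the linearity of $y$.

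For (ii), first assume for contradiction that $\ocut{G}{H}{-\alpha} = \emptyset$ and pick $w \in V(G)\setminus V(H)$. Let $P$ be an $\alpha$-ditrail from $w$ to $r$ and let $vx_1$ be its first cut edge, with $x_1 \in V(H)$; sharpness excludes $x_1 = r$, so $x_1$ is linear. By assumption the sign of $x_1$ over $vx_1$ is $\alpha$, so $P$ leaves $x_1$ with sign $-\alpha$, and the same assumption forbids the next edge of $P$ from crossing the cut; hence $P$ remains inside $V(H)$ from $x_1$ onward and $x_1 P r$ is a $-\alpha$-ditrail from $x_1$ to $r$ in $G$, contradicting linearity of $x_1$. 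Next, fix $e = uy \in \ocut{G}{H}{-\alpha}$ with $u \in V(G)\setminus V(H)$, and let $\beta$ be the sign of $u$ over $e$. If $\beta = -\alpha$ then prepending $e$ to an $\alpha$-ditrail from $y$ to $r$ in $H$ yields a $-\alpha$-ditrail from $u$ to $r$, so $u$ is absolute; if instead $\beta = \alpha$ and $u$ were not absolute, then $u$ would be $\alpha$-linear in $G$, and $H$ together with $u$ and the edge $e$ would be a strictly larger linear $\alpha$-semiradial whose non-root vertices are all linear in $G$ (the only edge at $u$ being $e$ blocks any $-\alpha$-ditrail through $u$), again contradicting maximality of $H$.

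Finally, applying Lemma~\ref{lem:neigh2ear} with $S = V(H)$ and $x = u$, using the $-\beta$-ditrail $P$ from $u$ to $r$ guaranteed by absoluteness of $u$, yields either a simple diear with $y$-endpoint of sign $-\alpha$ and opposite endpoint $z$ (the first vertex of $V(H)$ met while tracing $P$ from $u$), or the desired $-\alpha$-scoop diear with grip $e$. In the simple case, sharpness again rules out $z = r$, so $z$ is linear in $G$; if the sign $\delta$ of $z$ on the last edge of the diear equals $\alpha$, then the tail $zPr$ starts at $z$ with sign $-\alpha$ and contradicts linearity of $z$, while if $\delta = -\alpha$ the diear is a simple $(-\alpha,-\alpha)$-diear, contradicting part (i). Either way the simple case is impossible, so the scoop case holds. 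The main obstacle is this final case analysis, where excluding the simple-diear output of Lemma~\ref{lem:neigh2ear} requires combining part (i) with the linearity of $z$ and the sign constraint propagated through $P$ at $z$.
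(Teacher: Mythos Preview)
Your proof is correct and follows essentially the same approach as the paper (concatenation with an $\alpha$-ditrail in $H$ for \ref{item:noear}; maximality of $H$ plus Lemma~\ref{lem:neigh2ear} and \ref{item:noear} for the scoop diear), only with more detail where the paper is terse. One small over-claim: in your argument that $\ocut{G}{H}{-\alpha}\neq\emptyset$, the assertion ``$P$ remains inside $V(H)$ from $x_1$ onward'' is neither established by your one-step sign observation (after two steps $P$ could in principle exit again) nor needed---the contradiction already follows directly from ``$P$ leaves $x_1$ with sign $-\alpha$,'' which makes $x_1Pr$ a $-\alpha$-ditrail from $x_1$ to $r$ in $G$ regardless of where $P$ subsequently travels.
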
 
\begin{proof} 
For proving \ref{item:noear}, 
suppose the contrary, and let $P$ be a $(-\alpha, -\alpha)$-ditrail relative to $H$. 
Let $y$ and $z$ be the first and last vertices of $P$, respectively. 
Let $Q$ be an $\alpha$-ditrail of $H$ from $z$ to $r$. 
Then, $P + Q$ is an $-\alpha$-ditrail from $y$ to $r$, which contradicts $y\in V(H)$. 
Therefore, \ref{item:noear} follows.

Let $x\in V(G)\setminus V(H)$. 
Then, it is easily observed that 
there is an $(\alpha, -\alpha)$-ditrail from $x$ to a vertex in $H$ whose edges are disjoint from $E(H)$.  
This implies $\ocut{G}{H}{-\alpha} \neq \emptyset$.   

If $u$ is linear in $G$, then $H + e$ forms a linear $\alpha$-semiradial in which every vertex is linear. 
This contradicts the maximality of $H$.  
As such, $u$ is absolute in $G$. 
This further implies from Lemma~\ref{lem:neigh2ear} and the statement \ref{item:noear} 
that there is a scoop diear relative to $H$ whose grip is $e$. 
This completes the proof. 
\end{proof}

\section{Grounds for Radials}  \label{sec:ground4r} 
\subsection{Strong and Almost Strong Grounds} \label{sec:ground4r:ground} 

In Section~\ref{sec:ground4r:ground}, we introduce the concept of strong and almost strong grounds for radials 
and provide some properties to be used in later sections. 
Lemma~\ref{lem:r2union} is easily confirmed. 

\begin{lemma} \label{lem:r2union} 
Let $\alpha\in\{+, -\}$. 
Let $G$ be a bidirected graph, and let $r\in V(G)$. 
Let $H_1$ and $H_2$ be subgraphs of $G$ that are $\alpha$-radials with root $r$.  
Then, $H_1 + H_2$ is also an $\alpha$-radial with root $r$. 
Furthermore, 
if $H_1$ and $H_2$ are both strong or almost strong, then $H_1 + H_2$ is also strong or almost strong. 
\end{lemma}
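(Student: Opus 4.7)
The proof is a direct verification exploiting the observation that any $(\gamma, \delta)$-ditrail living inside one of the subgraphs $H_i$ is automatically a $(\gamma, \delta)$-ditrail inside the supergraph $H_1 + H_2$, since both the edge set and the vertex set of $H_i$ are contained in those of $H_1 + H_2$. No new ditrails need to be constructed; the existing ones from $H_1$ and $H_2$ suffice.

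For the first assertion, I would note that $V(H_1 + H_2) = V(H_1) \cup V(H_2)$. Given any $x$ in this union, pick $i \in \{1, 2\}$ with $x \in V(H_i)$; since $H_i$ is an $\alpha$-radial with root $r$, it contains an $(\alpha, -\alpha)$-ditrail from $x$ to $r$, and this ditrail persists in $H_1 + H_2$. Hence every vertex of $H_1 + H_2$ reaches $r$ via such a ditrail, which is exactly the $\alpha$-radial condition with root $r$.

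For the strong case, the same template yields the extra reachability: if both $H_i$ are strong, then every $x \in V(H_i)$ additionally admits a $(-\alpha, -\alpha)$-ditrail from $x$ to $r$ in $H_i$, which again lifts to $H_1 + H_2$, establishing that every vertex is $\alpha$-strong in $H_1 + H_2$ with respect to $r$.

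For the almost strong case I expect the same lifting to work, with the additional bookkeeping observation that the set of vertices failing the strong condition in $H_1 + H_2$ can only shrink relative to $H_i$: a vertex $v$ that is sublinear in $H_1$ but also lies in $V(H_2)$ where it is strong becomes strong in $H_1 + H_2$ via the $H_2$-ditrails. The one point that needs checking, and which I expect to be the main (but mild) obstacle, is to verify that the resulting set of non-strong vertices in $H_1 + H_2$ still satisfies whatever structural restriction the definition of \emph{almost strong} imposes (in particular, that it sits in the distinguished position specified by the definition). Since both $H_1$ and $H_2$ individually satisfy that restriction and the set of non-strong vertices of $H_1 + H_2$ is contained in the intersection of the corresponding sets for $H_1$ and $H_2$, the almost-strong property is inherited by $H_1 + H_2$.
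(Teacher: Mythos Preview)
The paper gives no proof, simply declaring the lemma ``easily confirmed,'' so there is little to compare against in detail. Your argument for the $\alpha$-radial property and for the strong case is correct: lifting the witnessing ditrails from $H_i$ into $H_1+H_2$ does the job.

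For the almost strong case there is a genuine gap in your reasoning. By the definition in the Appendix, an almost strong $\alpha$-radial with root $r$ satisfies two conditions: (1) every $v\in V(H)\setminus\{r\}$ admits a $(-\alpha,-\alpha)$-ditrail to $r$, and (2) there is \emph{no} closed $(-\alpha,-\alpha)$-ditrail over $r$. Your ``the set of non-strong vertices shrinks'' observation correctly handles (1): it shows that every $v\neq r$ is strong in $H_1+H_2$, so the non-strong set is contained in $\{r\}$. But (2) is a \emph{negative} condition, and unions can only create new ditrails; your containment argument does not show that $r$ actually remains non-strong in $H_1+H_2$. From what you wrote one can conclude only that $H_1+H_2$ is strong \emph{or} almost strong, not that it is almost strong whenever both $H_i$ are.

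If the lemma is read as asserting closure of the combined class $\{\text{strong, almost strong}\}$ under union --- which is precisely what is needed immediately afterwards to define the strong/almost strong ground as a maximum --- then this weaker conclusion already suffices, and your proof is complete. If one insists on the sharper reading ``both almost strong $\Rightarrow$ $H_1+H_2$ almost strong,'' then condition (2) must be verified separately; the cleanest route in the paper's setting is to observe that when the ambient radial $G$ has sublinear root $r$ (the only case in which almost strong subgraphs arise), no closed $(-\alpha,-\alpha)$-ditrail over $r$ exists in $G$, hence none exists in the subgraph $H_1+H_2$ either.
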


Under Lemma~\ref{lem:r2union}, strong and almost strong grounds of radials can be defined.

\begin{definition} 
Let $\alpha\in\{+, -\}$.  Let $G$ be an $\alpha$-radial with root $r$. 
Let $H$ be the maximum subgraph $H$ of $G$ that is a strong or almost strong $\alpha$-radial with root $r$. 
We call $H$ the {\em strong} or {\em almost strong ground} 
if it is a strong or almost strong $\alpha$-radial with root $r$, respectively. 
\end{definition}

If the root $r$ is strong, then $G$ has a strong ground but never has an almost strong ground. 
If $r$ is sublinear, then $G$ never has a strong ground but has an almost strong ground. 
A strong or almost strong ground in a radial is said to be {\em trivial} if it comprises a single vertex, that is, the root, and no edge.   
Strong grounds can never be trivial, whereas almost strong grounds can be trivial. 
We use these facts in the remainder of this paper sometimes without explicitly mentioning it.

\begin{lemma} \label{lem:abgneigh2sublinear} \label{lem:astgneigh2sublinear} 
Let $\alpha \in \{+, -\}$.  
Let $G$ be an $\alpha$-radial with root $r$.  
\begin{rmenum} 
\item \label{item:str} Assume that $r$ is strong in $G$, and let $H$ be the strong ground of $G$. 
Then, every vertex from $\parNei{G}{H}$ is sublinear in $G$. 
\item \label{item:sublinr} Assume that $r$ is sublinear in $G$, and let $H$ be the almost  strong ground of $G$.  
Then, every neighbor of $V(H)\setminus \{r\}$ is sublinear in $G$. 
\end{rmenum} 
\end{lemma}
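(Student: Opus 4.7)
The plan for both parts is to mimic the proof of Lemma~\ref{lem:abgnei2lin}: assume for contradiction that the conclusion fails, use Lemma~\ref{lem:neigh2ear} to extract a diear relative to $V(H)$, and then invoke the constructive ear-decomposition characterization of strong (respectively, almost strong) radials from the preceding paper~\cite{kita2019constructive}---the analogue of Theorem~\ref{thm:asr} used in that earlier proof---to conclude that $H$ together with the diear is itself a strong (respectively, almost strong) $\alpha$-radial with root $r$, contradicting the maximality of $H$.

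For part~\ref{item:str}, I would suppose that some $x \in \parNei{G}{H}$ is not sublinear with respect to $r$. Since every vertex of an $\alpha$-radial is either $\alpha$-strong or $\alpha$-sublinear, $x$ must be $\alpha$-strong. Fix an edge $e = xy$ with $y \in V(H)$, and let $\beta$ be the sign of $x$ over $e$. Strongness of $x$ supplies a $(-\beta, -\alpha)$-ditrail from $x$ to $r$, so Lemma~\ref{lem:neigh2ear} applied with $S := V(H)$ produces a diear relative to $V(H)$ of one of the two forms stated there. By the characterization of strong radials, $H$ together with this diear is a strong $\alpha$-radial with root $r$ that properly contains $H$, contradicting the maximality of $H$.

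Part~\ref{item:sublinr} proceeds in exactly the same way, except that $x$ is taken to be a neighbor of some $y \in V(H) \setminus \{r\}$ and the auxiliary edge is $e = xy$. The exclusion of $r$ as the attachment vertex inside $H$ is precisely what preserves the sublinear status of $r$ when the diear is absorbed; the almost-strong analogue of Theorem~\ref{thm:asr} then promotes $H$ plus the diear to a strictly larger almost strong $\alpha$-radial with root $r$, again contradicting maximality.

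The main obstacle, as in Lemma~\ref{lem:abgnei2lin}, is verifying that attaching the diear produced by Lemma~\ref{lem:neigh2ear} to $H$ really yields a radial of the required type. This reduces to checking that the sign pattern of the diear---its starting sign at $y$, its terminal sign at the second attachment vertex in $V(H)$, and, for scoop diears, the signs of the grip---matches the ear-addition move admitted by the constructive characterizations of strong and almost strong radials established in the preceding paper. Once that compatibility is in place, the maximality contradiction closes both arguments immediately.
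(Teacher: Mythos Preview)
Your plan for part~\ref{item:str} is correct and matches the paper: Theorem~\ref{thm:str} allows adding \emph{any} diear to a strong $\alpha$-radial, so the diear produced by Lemma~\ref{lem:neigh2ear} immediately gives a strictly larger strong ground.

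Part~\ref{item:sublinr}, however, has a real gap. When you apply Lemma~\ref{lem:neigh2ear} with $S = V(H)$, you only control \emph{one} attachment vertex of the resulting diear, namely $y \in V(H)\setminus\{r\}$. The other attachment vertex---the first vertex $z$ of the $(-\beta,-\alpha)$-ditrail $P$ lying in $V(H)$---may well be $r$ itself. Your sentence ``the exclusion of $r$ as the attachment vertex inside $H$ is precisely what preserves the sublinear status of $r$'' only excludes $r$ from being $y$, not from being $z$. If $z = r$, then $H$ together with this diear is \emph{not} almost strong: the diear ends at $r$ with sign $-\alpha$, and concatenating its reverse with a $(-\gamma,-\alpha)$-ditrail in $H$ from $y$ to $r$ (which exists since $y$ is strong in $H$) yields a closed $(-\alpha,-\alpha)$-ditrail over $r$. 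So the ear-addition move from Theorem~\ref{thm:asr2char} does not apply, and the maximality contradiction fails in this case.

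The paper handles exactly this by a case split on whether the first vertex of $P$ in $V(H)$ is $r$ or lies in $V(H)\setminus\{r\}$. In the latter case your argument goes through; in the former case one obtains directly the closed $(-\alpha,-\alpha)$-ditrail over $r$ described above, contradicting the hypothesis that $r$ is sublinear rather than the maximality of $H$. You need to add this second case to complete the proof.
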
 
\begin{proof} 
We first prove \ref{item:sublinr}. 
Let $v\in \parNei{G}{ V(H)\setminus \{r\} }$. If $v = r$, then $v$ is sublinear by assumption. 
Let $v \neq r$. 
Let  $w\in V(H)\setminus \{r\}$ be a vertex that is adjacent to $v$ with an edge $e$, 
 and let $\beta\in\{+, -\}$ be the sign of $v$ over $e$. 
Suppose that $v$ is strong in $G$. 
Then, $G$ has a $(-\beta, -\alpha)$-ditrail $P$ from $v$ to $r$. 
Trace $P$ from $v$, and let $x$ be the first encountered vertex that is in $V(H)$. 

First, consider the case $x = r$. 
Because $r$ is sublinear, $vPx$ is a $(-\beta, -\alpha)$-ditrail. 
Let $\gamma$ be the sign of $w$ over $e$, and let $Q$ be a $(-\gamma, -\alpha)$-ditrail of $H$ from $w$ to $r$. 
Then, $xP^{-1}v + (v, e, w) + Q$ is a closed $(-\alpha, -\alpha)$-ditrail over $r$, 
which is a contradiction. 

Next, consider the case $x \in V(H)\setminus \{r\}$. 
Then, $(w, e, v) + vPx$ is a diear relative to $H$ that does not contain $r$. 
By Theorem~\ref{thm:asr2char}, this contradicts the maximality of $H$. 
The statement \ref{item:str} can be proved in a similar way using Theorem~\ref{thm:str}. 
The lemma is proved. 
\end{proof}

\subsection{Extended Grounds for Radials with Sublinear Root} 

In this section, we introduce the concept of extended grounds for radials with sublinear root and provide their properties.  
Lemma~\ref{lem:shell2sum} can easily be confirmed. 

\begin{lemma} \label{lem:shell2sum} 
Let $\alpha \in \{+, -\}$.  
Let $G$ be an $\alpha$-radial with sublinear root $r$.  
Let $H$ be the almost strong ground of $G$.  
Let $I_1$ and $I_2$  be subgraphs of $G$ 
such that, for each $i\in \{1,2\}$, 
 $I_i$ is an $\alpha$-radial with root $r$, $V(H)\subseteq V(I_i)$ holds,  and every vertex in $V(I_i)\setminus V(H)$ is sublinear in $G$. 
Then, $I_1 + I_2$ is also an  $\alpha$-radial with root $r$ that contains $V(H)$ 
for which every vertex in $V(I_1 + I_2)\setminus V(H)$ is sublinear in $G$. 
\end{lemma}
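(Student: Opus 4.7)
The plan is very direct, since the statement is essentially a packaging of hypotheses plus one prior lemma. I will verify the three pieces of the conclusion in order.

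First, to see that $I_1 + I_2$ is an $\alpha$-radial with root $r$, I would simply invoke Lemma~\ref{lem:r2union}: by hypothesis both $I_1$ and $I_2$ are subgraphs of $G$ that are $\alpha$-radials with root $r$, and Lemma~\ref{lem:r2union} says exactly that their sum is again an $\alpha$-radial with root $r$. No additional work is needed here. (The strong/almost-strong refinement of that lemma is not being used.)

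Second, the containment $V(H) \subseteq V(I_1 + I_2)$ is immediate: by hypothesis $V(H) \subseteq V(I_1)$, and $V(I_1) \subseteq V(I_1) \cup V(I_2) = V(I_1 + I_2)$.

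Third, for the sublinearity condition, I would take any $v \in V(I_1 + I_2)\setminus V(H)$ and note that $V(I_1 + I_2) = V(I_1) \cup V(I_2)$, so $v$ lies in $V(I_i) \setminus V(H)$ for some $i \in \{1,2\}$. The hypothesis on $I_i$ then states that $v$ is sublinear in $G$, as required. There is no real obstacle: the sublinearity is a property measured inside the fixed ambient graph $G$, so it transfers trivially from each $I_i$ to the union, which is precisely why the paper asserts that this lemma "can easily be confirmed."
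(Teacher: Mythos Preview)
Your proof is correct and is exactly the direct verification the paper has in mind: the paper gives no proof beyond stating that the lemma ``can easily be confirmed,'' and your three steps (invoking Lemma~\ref{lem:r2union} for the radial property, trivially checking $V(H)\subseteq V(I_1+I_2)$, and observing that sublinearity in $G$ passes from each $V(I_i)\setminus V(H)$ to the union) are precisely the intended confirmation.
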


From Lemma~\ref{lem:shell2sum}, the concept of extended grounds can be introduced.

\begin{definition} 
Let $\alpha \in \{+, -\}$.  
Let $G$ be an $\alpha$-radial with sublinear root $r$.  
Let $H$ be the almost strong ground of $G$.  
The {\em extended ground} of $G$ is the maximum subgraph $I$ of $G$  
such that $I$ is an $\alpha$-radial with root $r$ and contains $H$,  and every vertex in $V(I)\setminus V(H)$ is sublinear in $G$. 
We call $V(I)\setminus V(H)$ the {\em shell} of $I$. 
Let $S_1$ be the subset of shell that is defined as follows: 
A vertex $x$ from a shell is in $S_1$ 
if $I$ has an $(\alpha, -\alpha)$-ditrail from $x$ to $r$ that is disjoint from $V(H)\setminus \{r\}$; 
 let $S_2 := V(I)\setminus V(H) \setminus S_1$. 
We call $S_1$ and $S_2$ the {\em first} and {\em second} shells, respectively. 
We call $G$ a {\em triplex} if $G$ is equal to its extended ground. 
\end{definition}

Lemmas~\ref{lem:rrneigh2g} and \ref{lem:shellneigh2strong} are provided to be used in later sections.

\begin{lemma} \label{lem:rrneigh2g} 
Let $\alpha\in\{+, -\}$.  Let $G$ be an $\alpha$-radial with sublinear root $r$. 
Let $v  \in \oNei{G}{r}{-\alpha}$.  
If $v$ is strong, then it is contained in the almost strong ground. 
If $v$ is sublinear, then it is contained in the first shell. 
\end{lemma}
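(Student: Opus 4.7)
Let $e$ denote the edge between $r$ and $v$ with $r$'s sign $-\alpha$, and let $\beta \in \{+, -\}$ be the sign of $v$ over $e$; note that $(v, e, r)$ is a $(\beta, -\alpha)$-ditrail from $v$ to $r$. Throughout, $H$ denotes the almost strong ground and $I$ the extended ground of $G$.

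For the first claim, I would argue by contradiction: assume $v$ is strong yet $v \notin V(H)$. Since $v$ is strong, it has a $-\beta$-ditrail to $r$ in $G$ (use the $(\alpha, -\alpha)$-ditrail when $\beta = -\alpha$ and the $(-\alpha, -\alpha)$-ditrail when $\beta = \alpha$). Applying Lemma~\ref{lem:neigh2ear} with $S := V(H)$, $x := v$, and $y := r$ then yields a diear $D$ relative to $V(H)$ that is either simple and begins with $(r, e, v)$ or ends with $(v, e, r)$, or is a scoop diear with grip $e$. The next step is to invoke Theorem~\ref{thm:asr2char} of the preceding paper to conclude that attaching $D$ to $H$ produces an almost strong $\alpha$-radial rooted at $r$ that strictly enlarges $H$, contradicting the maximality of $H$.

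For the second claim, I would first observe that $\beta$ must equal $\alpha$: otherwise $(v, e, r)$ would be a $(-\alpha, -\alpha)$-ditrail, forcing $v \in \reachset{G}{r}{-\alpha}{-\alpha}$ and contradicting the sublinearity of $v$. Moreover, since every non-root vertex of $H$ is strong while $v$ is sublinear, $v \notin V(H)$. I would then set $H' := H + v + e$. Since the only added vertex is $v$ and $(v, e, r)$ is an $(\alpha, -\alpha)$-ditrail from $v$ to $r$ in $H'$, the subgraph $H'$ is an $\alpha$-radial rooted at $r$ containing $H$ whose only shell vertex, $v$, is sublinear in $G$ by hypothesis. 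By the maximality of $I$, we obtain $H' \subseteq I$, so $v \in V(I) \setminus V(H)$ and $e \in E(I)$; the ditrail $(v, e, r)$ then lies in $I$, is of type $(\alpha, -\alpha)$, and is disjoint from $V(H) \setminus \{r\}$, certifying $v \in S_1$.

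The main obstacle will be the appeal to Theorem~\ref{thm:asr2char} in the first claim: the diear $D$ necessarily has the sublinear root $r$ as one of its endpoints (since the grip $e$ is incident to $r$), so the characterisation has to be applied in a regime where the diear is attached directly to the root. Verifying that the resulting extension is genuinely an almost strong $\alpha$-radial in this regime is the delicate step. By contrast, the second claim reduces to a short unwrapping of the definitions together with the maximality of the extended ground.
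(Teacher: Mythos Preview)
Your treatment of the second claim is correct and essentially the same as the paper's.

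For the first claim, the obstacle you flag is real and you do not resolve it. The appeal to Theorem~\ref{thm:asr2char} elsewhere in the paper (e.g.\ in Lemma~\ref{lem:abgneigh2sublinear}) works because the diear produced there \emph{avoids} $r$; your diear necessarily meets $r$, and it is not clear that attaching such a diear to $H$ keeps the result almost strong. The paper sidesteps this entirely by applying Lemma~\ref{lem:neigh2ear} with $S=\{r\}$ rather than $S=V(H)$. Because $r$ is sublinear, a simple $(-\alpha,-\alpha)$-diear relative to $\{r\}$ would be a closed $(-\alpha,-\alpha)$-ditrail over $r$, which is impossible; so Lemma~\ref{lem:neigh2ear} yields a $-\alpha$-scoop diear with grip $vr$ relative to $\{r\}$. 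That scoop diear is, on its own, an almost strong $\alpha$-radial rooted at $r$ (it matches the base clause of the inductive definition underlying Theorem~\ref{thm:asr2char}), and hence lies inside $H$ by maximality. This gives $v\in V(H)$ directly, with no gluing at the root required.

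Your route can in principle be salvaged: when the simple diear relative to $V(H)$ has its other endpoint $z\in V(H)\setminus\{r\}$, concatenating with a suitable ditrail of $H$ from $z$ back to $r$ manufactures a forbidden closed $(-\alpha,-\alpha)$-ditrail over $r$, so that case cannot occur; one is then left with a scoop diear that meets $V(H)$ only at $r$, reducing to the paper's argument. But choosing $S=\{r\}$ from the outset avoids this detour.
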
 
\begin{proof} 
Let $H$ and $I$ be the almost strong and extended ground, respectively. 
If $v$ is a strong vertex, 
then Lemma~\ref{lem:neigh2ear} 
implies that there is a simple $(-\alpha, -\alpha)$-diear or $-\alpha$-scoop diear with grip $vr$ that is relative to $r$;    
clearly, there is a $-\alpha$-scoop diear with grip $vr$. 
Hence, Theorem~\ref{thm:asr2char} implies $v \in V(H)$.  
If $v$ is a sublinear vertex, then the sign of $v$  over $vr$ is $\alpha$. 
Therefore, 
$G. vr$ is a subgraph of $I$ that is a sublinear $\alpha$-radial with root $r$. 
Consequently, it is contained in the first shell. 
This completes the proof of this lemma. 
\end{proof}

Lemma~\ref{lem:shellneigh2strong} can be easily confirmed from Lemma~\ref{lem:neigh2ear} 
by a similar discussion as in the proof of Lemma~\ref{lem:lg2neigh}.

\begin{lemma}  \label{lem:shellneigh2strong} 
Let $\alpha \in \{+, -\}$.  
Let $G$ be an $\alpha$-radial with sublinear root $r$.  
Let $I$ be the extended ground of $G$. 
Then, the following properties hold:  
\begin{rmenum} 
\item \label{item:shellneigh2strong:r2sublin} There is no simple $(-\alpha, -\alpha)$-diear relative to $I$. 
\item \label{item:shellneigh2strong:round} $\oNei{G}{I}{-\alpha} \neq \emptyset$, and every vertex from $\oNei{G}{I}{-\alpha}$ is strong in $G$. 
\end{rmenum} 
\end{lemma}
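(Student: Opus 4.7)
The plan is to mirror the strategy of the proof of Lemma~\ref{lem:lg2neigh}, invoking Lemma~\ref{lem:neigh2ear} and (for the delicate subcase of \ref{item:shellneigh2strong:r2sublin}) Theorem~\ref{thm:asr2char}. Throughout, let $H$ denote the almost strong ground of $G$, so that $V(H)\subseteq V(I)$, every vertex of $V(H)\setminus\{r\}$ is strong in $G$, and every vertex of the shell $V(I)\setminus V(H)$ is sublinear in $G$.

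For \ref{item:shellneigh2strong:r2sublin} I would suppose for contradiction that $P$ is a simple $(-\alpha,-\alpha)$-diear relative to $I$ with endpoints $y,z\in V(I)$. Since $I$ is an $\alpha$-radial with root $r$, I would pick $(\alpha,-\alpha)$-ditrails $Q$ of $I$ from $z$ to $r$ and $R$ of $I$ from $y$ to $r$; the concatenations $P+Q$ and $P^{-1}+R$ are then $(-\alpha,-\alpha)$-ditrails of $G$ from $y$ and $z$ to $r$. If either of $y,z$ lies in the shell, the corresponding ditrail contradicts that vertex's sublinearity. If either equals $r$, the resulting closed $(-\alpha,-\alpha)$-ditrail at $r$ contradicts the sublinearity of $r$. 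Otherwise $y,z\in V(H)\setminus\{r\}$, and since the internal vertices of $P$ lie in $V(G)\setminus V(I)\subseteq V(G)\setminus V(H)$, the ditrail $P$ is a $(-\alpha,-\alpha)$-diear relative to $H$ not meeting $r$; this contradicts the maximality of $H$ via Theorem~\ref{thm:asr2char}, exactly as in the proof of Lemma~\ref{lem:astgneigh2sublinear}.

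For \ref{item:shellneigh2strong:round} I would work under the tacit assumption $V(G)\setminus V(I)\neq\emptyset$ (the claim being vacuous otherwise). For nonemptiness I would take any $x\in V(G)\setminus V(I)$ and an $(\alpha,-\alpha)$-ditrail $T$ from $x$ to $r$ in $G$, and let $w$ be the first vertex of $T$ lying in $V(I)$; as in the second paragraph of the proof of Lemma~\ref{lem:lg2neigh}(ii), the last edge of $xTw$ must have $w$-sign $-\alpha$, since an $\alpha$-entry would make the tail $wTr$ a $(-\alpha,-\alpha)$-ditrail whose existence, combined with part \ref{item:shellneigh2strong:r2sublin} and the sublinearity of the shell, yields a contradiction. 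For the strength claim, given $v\in\oNei{G}{I}{-\alpha}$ joined to $u\in V(I)$ by an edge $e$ of $u$-sign $-\alpha$ and $v$-sign $\gamma$, if $\gamma=-\alpha$ then $(v,e,u)$ followed by any $(\alpha,-\alpha)$-ditrail of $I$ from $u$ to $r$ is a $(-\alpha,-\alpha)$-ditrail from $v$ to $r$ in $G$, ruling out the sublinearity of $v$; if $\gamma=\alpha$, then supposing $v$ sublinear would make $I+e$ an $\alpha$-radial with root $r$ containing $H$ whose non-$H$ vertices are all sublinear, contradicting the maximality of $I$. Hence $v$ is strong.

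The main obstacle is the final subcase in \ref{item:shellneigh2strong:r2sublin} where both $y,z\in V(H)\setminus\{r\}$: here the contradiction is not extractable purely from the sublinearity data attached to $I$ and must instead be produced by reinterpreting $P$ as a diear relative to the strictly smaller subgraph $H$ so that Theorem~\ref{thm:asr2char} applies. A secondary delicate point is the nonemptiness of $\ocut{G}{I}{-\alpha}$ in \ref{item:shellneigh2strong:round}, where one must verify that the first $V(I)$-entry of an outside vertex's $(\alpha,-\alpha)$-ditrail necessarily happens along a $-\alpha$-cut edge, using both the sublinearity of the shell and part \ref{item:shellneigh2strong:r2sublin}.
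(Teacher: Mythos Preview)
Your proof follows the route the paper itself indicates (it gives no explicit argument, only the remark that the lemma ``can be easily confirmed from Lemma~\ref{lem:neigh2ear} by a similar discussion as in the proof of Lemma~\ref{lem:lg2neigh}''), and your treatment of part~\ref{item:shellneigh2strong:r2sublin} and of the strength assertion in~\ref{item:shellneigh2strong:round} is correct and in the intended spirit.

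There is, however, a real gap in your nonemptiness argument for~\ref{item:shellneigh2strong:round}. If the first $V(I)$-entry $w$ along $T$ lies in $V(H)\setminus\{r\}$, then the tail $wTr$ being a $(-\alpha,-\alpha)$-ditrail is \emph{not} a contradiction: such a $w$ is strong in $G$, and neither part~\ref{item:shellneigh2strong:r2sublin} (which forbids simple $(-\alpha,-\alpha)$-diears, not arbitrary $(-\alpha,-\alpha)$-ditrails between vertices of $I$) nor the sublinearity of the shell says anything about this $w$. The repair uses the same two tools you already deploy elsewhere in the proof. Let $w'$ be the predecessor of $w$ on $T$ and $e=w'w$ the entry edge; then $w'\in V(G)\setminus V(I)$ is adjacent to $V(H)\setminus\{r\}$, hence sublinear by Lemma~\ref{lem:astgneigh2sublinear}\ref{item:sublinr}. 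If the sign of $w'$ over $e$ is $\alpha$, then $(w',e,w)$ followed by a $(-\alpha,-\alpha)$-ditrail of $H$ from $w$ to $r$ shows that adjoining $w'$ and $e$ to $I$ produces a strictly larger $\alpha$-radial with root $r$ whose non-$H$ vertices are all sublinear, contradicting the maximality of $I$; if the sign is $-\alpha$, the same concatenation is a $(-\alpha,-\alpha)$-ditrail from $w'$ to $r$, contradicting the sublinearity of $w'$. Once this missing case is handled, your argument goes through.
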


Observation~\ref{obs:shell} can easily be confirmed from the definition of shells. 

\begin{observation}  \label{obs:shell} 
Let $\alpha \in \{+, -\}$.  
Let $G$ be an $\alpha$-radial with sublinear root $r$.  
Let $H$ and $I$ be the almost strong  and  extended grounds of $G$, respectively.  
Let $S_1$ and $S_2$ be the first and second shells of $I$, respectively. 
Then, the following properties hold: 
\begin{rmenum} 
\item If $S_1 = S_2 = \emptyset$, then $G = I = H$. 
\item If $V(H) = \{r\}$, then $S_2 = \emptyset$. 
\end{rmenum} 
\end{observation}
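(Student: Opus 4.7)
My plan is to treat the two parts of the observation separately, since each reduces to a short chain of implications from the earlier ground lemmas, with nearly all of the content concentrated in part (i).

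Part (ii) I expect to be essentially immediate from the definitions. If $V(H) = \{r\}$, then every $x$ in the shell $V(I) \setminus V(H)$ is a non-root vertex of the $\alpha$-radial $I$, so $I$ carries an $(\alpha, -\alpha)$-ditrail from $x$ to $r$; this ditrail is vacuously disjoint from $V(H) \setminus \{r\} = \emptyset$, so $x \in S_1$, and therefore $S_2 = V(I) \setminus V(H) \setminus S_1 = \emptyset$.

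For part (i), the hypothesis $S_1 = S_2 = \emptyset$ amounts to $V(I) = V(H)$, and I will prove in turn that $V(G) = V(I)$, then $G = I$, then $I = H$. To prove $V(G) = V(I)$ I argue by contradiction: if $V(G) \setminus V(I) \neq \emptyset$, Lemma~\ref{lem:shellneigh2strong}\ref{item:shellneigh2strong:round} supplies a vertex $v \in \oNei{G}{I}{-\alpha}$ that is strong in $G$. Since $V(I) = V(H)$, any edge realizing $v \in \oNei{G}{I}{-\alpha}$ attaches $v$ to a vertex of $V(H)$; if that vertex lies in $V(H) \setminus \{r\}$, Lemma~\ref{lem:abgneigh2sublinear}\ref{item:sublinr} forces $v$ to be sublinear, a contradiction, so $v$ must attach only at $r$ via an edge whose sign at $r$ is $-\alpha$, i.e.\ $v \in \oNei{G}{r}{-\alpha}$. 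But then Lemma~\ref{lem:rrneigh2g} places the strong vertex $v$ inside $H = I$, contradicting $v \in V(G) \setminus V(I)$. Once $V(G) = V(I)$ is established, $G$ itself satisfies the defining conditions of the extended ground (it is an $\alpha$-radial containing $H$ whose shell $V(G) \setminus V(H) = \emptyset$ is vacuously sublinear), so maximality yields $G = I$. For $I = H$, I verify that $I$ itself is an almost strong $\alpha$-radial with root $r$: the root $r$ remains sublinear in $I$ because $\reachset{I}{r}{-\alpha}{-\alpha} \subseteq \reachset{G}{r}{-\alpha}{-\alpha}$ while $r \in \reachset{I}{r}{\alpha}{-\alpha}$ since $I$ is a radial, and each non-root vertex of $V(I) = V(H)$ is already strong in $H$, hence strong in $I \supseteq H$; maximality of $H$ then gives $I \subseteq H$ and so $I = H$.

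The only nontrivial step is the contradiction establishing $V(G) = V(I)$: the main obstacle is controlling where a hypothetical outside strong neighbor can attach, and I expect Lemmas~\ref{lem:abgneigh2sublinear} and \ref{lem:rrneigh2g} to jointly close off the two possible attachment regions (non-root vertices of $H$ versus $r$ itself). Everything else is definitional chasing.
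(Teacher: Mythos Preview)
Your proof is correct. The paper gives no proof of this observation, merely stating that it ``can easily be confirmed from the definition of shells,'' so there is no paper argument to compare against; your detailed treatment of part~(i) via Lemmas~\ref{lem:abgneigh2sublinear}, \ref{lem:rrneigh2g}, and \ref{lem:shellneigh2strong} is a natural way to fill in what the paper leaves implicit. One small remark: Lemma~\ref{lem:shellneigh2strong}\ref{item:shellneigh2strong:round} as written omits the hypothesis $V(G)\setminus V(I)\neq\emptyset$ (which is explicit in its semiradial analogue, Lemma~\ref{lem:lg2neigh}), but your invocation of it inside a proof by contradiction is clearly the intended reading.
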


In the following two lemmas, we investigate the inside structure of extended grounds 
and show that an extended ground is a combination of two radials and one semiradial. 
Lemma~\ref{lem:shell2path} is provided for proving Lemma~\ref{lem:shell}.

\begin{lemma} \label{lem:shell2path} 
Let $\alpha \in \{+, -\}$.  
Let $G$ be an $\alpha$-radial with sublinear root $r$.  
Let $H$ and $I$ be the almost strong  and  extended grounds of $G$, respectively.  
Let $S\subseteq V(G)$ be the shell of $I$, and let $S_1$ and $S_2$ be the first and second shells, respectively.  
Then, the following properties hold: 
\begin{rmenum} 
\item \label{item:shell2path:r} If $P$ is an $(\alpha, -\alpha)$-ditrail from $x\in S_1$ to $r$  with $V(P)\setminus \{r\} \subseteq S$, 
then $V(P)\setminus \{r\} \subseteq S_1$ holds. 
\item \label{item:shell2path:sr}  If $P$ is an $(\alpha, -\alpha)$-ditrail from $x\in S_2$ to $r$, 
then there exists a vertex $y\in V(H)\setminus \{r\}$ such that $V(xPy)\setminus \{y\} \subseteq S_2$. 
There is no $-\alpha$-ditrail from  any $x \in S_2$ to any $y\in V(H)\setminus \{r\}$  
whose vertices except $y$ are contained in $S_2$. 
\end{rmenum} 
\end{lemma}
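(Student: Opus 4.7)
The plan is to analyze the subditrails $vPr$ of $P$ at each intermediate vertex $v$, using a dichotomy based on the sign of $v$ on its outgoing edge along $P$, together with the maximality of the almost strong ground $H$ (via Theorem~\ref{thm:asr2char}) and of the extended ground $I$ (via Lemma~\ref{lem:shell2sum}). A preliminary subclaim is key: if $v\in V(P)\setminus\{r\}$ and $v\notin V(H)$, then $vPr$ is an $(\alpha,-\alpha)$-ditrail (and in particular $v$ is sublinear). To prove it, let $\delta$ be the sign of $v$ on the last edge of $xPv$, so $vPr$ has end signs $(-\delta,-\alpha)$; if $\delta=\alpha$ then $vPr$ is a $(-\alpha,-\alpha)$-ditrail, making $v$ strong, and tracing this ditrail from $v$ until its first meeting with $V(H)$ produces a diear relative to $H$, contradicting the maximality of $H$ via Theorem~\ref{thm:asr2char}.

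For part (i), let $v\in V(P)\setminus\{r\}\subseteq S$. Since $V(H)\cap S=\emptyset$, applying the subclaim simultaneously to each vertex of $vPr$ shows that every such vertex has an $\alpha$-subditrail of $vPr$ ending at $r$. Hence $H+vPr$ is an $\alpha$-radial with root $r$ that contains $H$ and whose additional vertices are sublinear in $G$, so by the maximality of $I$ we get $vPr\subseteq I$. This ditrail witnesses $v\in S_1$.

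For part (ii), let $y$ be the first vertex of $P$ (starting from $x$) lying in $V(H)$. If $y=r$, the argument of part (i) applied to $xPr$ gives $xPr\subseteq I$, witnessing $x\in S_1$ and contradicting $x\in S_2$; hence $y\in V(H)\setminus\{r\}$. To show $V(xPy)\setminus\{y\}\subseteq S_2$, I first use the construction $H+xPy$ to place $V(xPy)$ inside $V(I)$: for each $v\in V(xPy)\setminus\{y\}$ the subclaim gives that $vPy$ is an $\alpha$-ditrail, and since $y$ is strong in $H$ one can pick a ditrail of $H$ from $y$ to $r$ whose sign at $y$ matches the sign of $vPy$ at $y$, producing an $\alpha$-ditrail in $H+xPy$ from $v$ to $r$. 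Thus $H+xPy$ is an $\alpha$-radial of the kind appearing in the definition of the extended ground, and maximality of $I$ yields $xPy\subseteq I$, placing each such $v$ in the shell. If some such $v$ were in $S_1$, then an $(\alpha,-\alpha)$-ditrail $Q$ in $I$ from $v$ to $r$ disjoint from $V(H)\setminus\{r\}$ would combine with $xPv\subseteq I$, after standard trail surgery to eliminate any shared edges, into an $(\alpha,-\alpha)$-ditrail in $I$ from $x$ to $r$ disjoint from $V(H)\setminus\{r\}$, forcing $x\in S_1$ and contradicting $x\in S_2$; hence $v\in S_2$.

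For the second statement of (ii), suppose a $-\alpha$-ditrail $T$ from some $x\in S_2$ to some $y\in V(H)\setminus\{r\}$ has all its vertices except $y$ in $S_2$, and let $\gamma$ be the sign of $y$ on the last edge of $T$. Since $y$ is strong in $H$, $H$ contains a $(-\gamma,-\alpha)$-ditrail $R$ from $y$ to $r$, and $T$ and $R$ are edge-disjoint because $V(T)\setminus\{y\}\subseteq S_2$ is disjoint from $V(H)$; the concatenation $T+R$ is a $(-\alpha,-\alpha)$-ditrail from $x$ to $r$ in $G$, contradicting the sublinearity of $x$. The main obstacle is the subclaim: it relies on invoking the diear characterization Theorem~\ref{thm:asr2char} of the almost strong ground to rule out strong vertices on $P$ outside $V(H)$. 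The remainder is careful bookkeeping of signs together with the maximality of $I$ and a routine trail-surgery step for concatenating ditrails inside $I$.
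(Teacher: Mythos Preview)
Your argument hinges on the ``subclaim'' that if $v\in V(P)\setminus\{r\}$ and $v\notin V(H)$, then $vPr$ is an $(\alpha,-\alpha)$-ditrail, and both the proof and the conclusion you draw from it are flawed.

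First, the proof: you say that if $vPr$ were a $(-\alpha,-\alpha)$-ditrail, then tracing it from $v$ to its first meeting $w$ with $V(H)$ ``produces a diear relative to $H$''. It does not. A diear relative to $H$ must have \emph{both} ends in $V(H)$ (or be a scoop, which requires a grip edge with one end in $V(H)$). The segment $vPw$ has $v\notin V(H)$ as its initial end, so it is not a diear relative to $H$, and Theorem~\ref{thm:asr2char} gives no contradiction. Compare with the proof of Lemma~\ref{lem:astgneigh2sublinear}: there the vertex $v$ is assumed adjacent to some $w\in V(H)\setminus\{r\}$, and it is the edge $wv$ that supplies the other end of the diear. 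You have no such edge available for an arbitrary $v$ on $P$.

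Second, even if you had established that $vPr$ is an $(\alpha,-\alpha)$-ditrail, the parenthetical ``in particular $v$ is sublinear'' does not follow: this only shows $v\in\reachset{G}{r}{\alpha}{-\alpha}$, not that $v\notin\reachset{G}{r}{-\alpha}{-\alpha}$. Yet you need sublinearity of every such $v$ to conclude that $H+xPy$ satisfies the extended-ground condition and hence lies in $I$.

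For part~\ref{item:shell2path:r} the subclaim is unnecessary: the hypothesis $V(P)\setminus\{r\}\subseteq S$ already gives that every such $v$ is in the shell and hence sublinear by the very definition of the extended ground, so $vPr$ is forced to be an $(\alpha,-\alpha)$-ditrail; this is the paper's argument. For part~\ref{item:shell2path:sr} the paper avoids your difficulty by taking $y$ to be the first vertex of $P$ in $S_1\cup V(H)$ rather than in $V(H)$, and then uses part~\ref{item:shell2path:r} together with the sublinearity of vertices in $S_1$ to rule out $y=r$ and $y\in S_1$ directly, without needing to certify sublinearity of unknown intermediate vertices.
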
 
\begin{proof} 
First, we prove \ref{item:shell2path:r}. 
From the definition of shell,  
 every vertex from $S_1$ is sublinear. 
Therefore, for every $y\in V(P)$,  $yPr$ is an $(\alpha, -\alpha)$-ditrail with $V(yPr)\setminus \{r\} \subseteq S$. That is, $y\in S_1$. 
This implies $V(P) \setminus \{r\} \subseteq S_1$.  
Thus, \ref{item:shell2path:r} is proved. 

We next prove \ref{item:shell2path:sr}. 
Trace $P$ from $x$, and let $y$ be the first encountered vertex that is in $S_1 \cup  V(H)$.  
If $y$ is $r$, then $xPy$ is an $(\alpha, -\alpha)$-ditrail from $x$ to $r$ with $V(xPy)\setminus \{r\} \subseteq S$, 
which contradicts $x\in S_2$. 
Hence, $y \neq r$. 
Suppose $y\in S_1$. 
Because $y$ is sublinear in $G$, $xPy$ is an $(\alpha, -\alpha)$-ditrail. 
By \ref{item:shell2path:r}, there is an $(\alpha, -\alpha)$-ditrail $Q$ from $y$ to $r$ with $V(zQr)\setminus \{r\} \subseteq S_1$.  
Therefore, $xPy + Q$ is an $(\alpha, -\alpha)$-ditrail from $x$ to $r$ with $V(P+Q)\setminus \{r\} \subseteq S$.  
This contradicts $x\in S_2$. 
Therefore, $y\in V(H)\setminus \{r\}$ follows, and $V(xPy)\setminus \{y\} \subseteq S_2$ is proved. 
The remaining claim of \ref{item:shell2path:sr} can easily be proved by considering the concatenation of ditrails. 
The statement \ref{item:shell2path:sr} is proved. 
This completes the proof of the lemma. 
\end{proof}

Lemma~\ref{lem:shell} is derived from Lemma~\ref{lem:shell2path} and is used in later sections.

\begin{lemma} \label{lem:shell} 
Let $\alpha \in \{+, -\}$.  
Let $G$ be an $\alpha$-radial with sublinear root $r$.  
Let $H$ and $I$ be the almost strong  and  extended grounds of $G$, respectively.  
Let $S_1$ and $S_2$ be the first and second shells of $I$, respectively.  
Then, the following properties hold: 
\begin{rmenum} 
\item \label{item:shell:r} $G[S_1 \cup \{r\} ]$ is a sublinear $\alpha$-radial with root $r$. 
\item \label{item:shell:sr} $G[S_2 \cup V(H) ]/V(H) - E_G[r, S_2]$ is a linear $\alpha$-semiradial with root $s$, 
where $s$ denote the contracted vertex that corresponds to $V(H)$. 
\item \label{item:shell:lgcut}  For every edge of the form $uv$ such that $u\in S_1$ and $v\in  V(H)\cup S_2$, the sign of $u$ over $uv$ is $\alpha$. 
\item \label{item:shell:rcut} For every edge of the form $rv$ such that $v\in  S_2$, the sign of $r$ over $rv$ is $\alpha$. 
\end{rmenum} 
\end{lemma}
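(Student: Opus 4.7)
I would prove the four claims in the order (iv), (iii), (i), (ii), since each later part relies on the sign-restrictions established earlier. Part (iv) is essentially immediate from Lemma~\ref{lem:rrneigh2g}: if some edge $rv$ with $v \in S_2$ had sign $-\alpha$ at $r$, then $v$ would lie in $\oNei{G}{r}{-\alpha}$, and because $v$ is sublinear (being in the shell), Lemma~\ref{lem:rrneigh2g} would place $v$ in the first shell, contradicting $v \in S_2$.

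For (iii), I would argue by contradiction. Suppose some edge $uv$ with $u \in S_1$ and $v \in V(H) \cup S_2$ has sign $-\alpha$ at $u$, and let $\gamma$ be the sign of $v$ over the edge. The membership $u \in S_1$ combined with Lemma~\ref{lem:shell2path}(\ref{item:shell2path:r}) yields an $(\alpha, -\alpha)$-ditrail $P_u$ from $u$ to $r$ with $V(P_u) \setminus \{r\} \subseteq S_1$. If $v \in V(H) \setminus \{r\}$, then $v$ is strong and carries a $(-\gamma, -\alpha)$-ditrail $Q$ to $r$, so $(u, uv, v) + Q$ is a $(-\alpha, -\alpha)$-ditrail from $u$ to $r$ contradicting sublinearity of $u$. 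If $v = r$ and $\gamma = -\alpha$, the edge itself is a $(-\alpha, -\alpha)$-ditrail from $u$ to $r$; the subcase $v = r$, $\gamma = \alpha$ is more delicate and requires appealing to the maximality of $I$ together with Lemma~\ref{lem:rrneigh2g} to rule out the offending sign at $u$. If $v \in S_2$, the concatenation $(v, uv, u) + P_u$ has valid sign alternation at $u$ (arrive $-\alpha$, leave $\alpha$) and is a $(\gamma, -\alpha)$-ditrail from $v$ to $r$; for $\gamma = -\alpha$ this contradicts sublinearity of $v$, and for $\gamma = \alpha$ it produces an $(\alpha, -\alpha)$-ditrail from $v$ to $r$ avoiding $V(H) \setminus \{r\}$, which forces $v \in S_1$ and contradicts $v \in S_2$.

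With (iii) and (iv) in hand, part (i) is verified by combining Lemma~\ref{lem:shell2path}(\ref{item:shell2path:r}) with inheritance of ditrail properties from $G$: the $\alpha$-radial property and $(\alpha, -\alpha)$-reachability for each $x \in S_1$ come directly from the lemma, while absence of $(-\alpha, -\alpha)$-ditrails to $r$ is inherited from sublinearity of $x$ in $G$; for the root $r$, sublinearity in $G$ transfers to the induced subgraph because parts (iii) and (iv) pin down the signs on all edges leaving $S_1 \cup \{r\}$ so that no new closed $(-\alpha, -\alpha)$-ditrail can arise, and a witnessing closed $(\alpha, -\alpha)$-ditrail at $r$ can be synthesised by walking along an edge to a suitable neighbor in $S_1$ and returning via Lemma~\ref{lem:shell2path}(\ref{item:shell2path:r}). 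For part (ii), each $x \in S_2$ receives an $\alpha$-ditrail to the contracted vertex $s$ from Lemma~\ref{lem:shell2path}(\ref{item:shell2path:sr}), whose initial segment lies in $S_2$ and terminates at a vertex of $V(H) \setminus \{r\}$; part (iv) ensures this segment never uses an edge from $E_G[r, S_2]$, so it survives in the contracted, edge-deleted graph. Linearity (absence of $(-\alpha, \cdot)$-ditrails to $s$) follows because any such ditrail in the contracted graph would lift in $G$ to a $-\alpha$-ditrail from $x$ arriving at some $y \in V(H) \setminus \{r\}$, which together with a $(-\delta, -\alpha)$-ditrail from $y$ inside the almost strong ground $H$ yields a $(-\alpha, -\alpha)$-ditrail from $x$ to $r$, contradicting sublinearity of $x$ in $G$.

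\textbf{Main obstacle.} The most delicate point is the subcase $v = r$, $\gamma = \alpha$ in (iii), where sublinearity of $u$ does not immediately yield a contradiction; here one must combine maximality of the extended ground $I$ with Lemma~\ref{lem:rrneigh2g} and careful control over whether $P_u$ itself traverses the edge $ur$. Once (iii) and (iv) are settled, the remaining verifications in (i) and (ii) are essentially bookkeeping on how ditrails descend through induced-subgraph restriction and contraction.
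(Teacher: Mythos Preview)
Your overall strategy is sound, but there is one genuine obstruction and one unnecessary complication.

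\textbf{The obstruction.} The subcase $v=r$, $\gamma=\alpha$ in (iii) that you single out as the ``main obstacle'' is not merely delicate---it is unprovable. Take $\alpha=+$, $V(G)=\{r,u\}$, and two parallel edges $e_1,e_2$ with signs $(-,+)$ and $(+,-)$ at $(u,r)$ respectively. Then $G$ is a $+$-radial with sublinear root $r$, the almost strong ground is $\{r\}$, the extended ground is $G$, and $S_1=\{u\}$, $S_2=\emptyset$; yet the edge $e_1$ has $u\in S_1$, $v=r\in V(H)$, and sign $-\alpha$ at $u$. So your proposed fix via maximality of $I$ and Lemma~\ref{lem:rrneigh2g} cannot work. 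The paper's own proof treats only the cases $v\in S_2$ and $v\in V(H)\setminus\{r\}$, and (iii) should be read with that restriction (which is all that is needed downstream, e.g.\ in Theorem~\ref{thm:triplex}).

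\textbf{The unnecessary complication.} Your belief that (i) and (ii) depend on (iii) and (iv) is mistaken. For (i), $G[S_1\cup\{r\}]$ is an induced subgraph of $G$, so any $(-\alpha,-\alpha)$-ditrail to $r$ in it is already one in $G$; sublinearity is inherited automatically, with no appeal to edge signs on the boundary. For (ii), the ditrail $xPy$ supplied by Lemma~\ref{lem:shell2path}\ref{item:shell2path:sr} has $V(xPy)\setminus\{y\}\subseteq S_2$ and $y\in V(H)\setminus\{r\}$, so it never visits $r$ and hence cannot use an edge of $E_G[r,S_2]$; part (iv) is not needed. The paper accordingly proves (i) and (ii) first, directly from Lemma~\ref{lem:shell2path}, and only then turns to (iii) and (iv).

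For the cases of (iii) that you do handle, your arguments are correct; in fact for $v\in V(H)\setminus\{r\}$ your route (concatenate $(u,uv,v)$ with a $(-\gamma,-\alpha)$-ditrail in $H$ and contradict sublinearity of $u$) is a touch more direct than the paper's, which instead builds a closed $(-\alpha,-\alpha)$-ditrail at $r$.
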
 
\begin{proof} 
We first prove \ref{item:shell:r}. 
Lemma~\ref{lem:shell2path} \ref{item:shell2path:r} easily implies that $G[S_1 \cup \{r\}]$ is an $\alpha$-radial with root $r$. 
From the definition of first shell, every vertex from $S_1$ is sublinear in $G$. 
Hence, the radial $G[S_1 \cup \{r\}]$ is sublinear. 
The statement \ref{item:shell:r} is proved.  
Lemmas~\ref{lem:shell2path} \ref{item:shell2path:sr} easily implies  \ref{item:shell:sr}.

We next prove \ref{item:shell:lgcut}. 
Suppose, to the contrary, that the sign of $u$ over $uv$ is $-\alpha$. 
Let $\beta \in \{+, -\}$ be the sign of $v$ over $uv$. 
By Lemma~\ref{lem:shell2path} \ref{item:shell2path:r}, 
there is an $(\alpha, -\alpha)$-ditrail from $Q$ from $u$ to $r$ with $V(Q)\setminus \{r\} \subseteq S_1$. 
Then, $(v, uv, u) + Q$ is a $(\beta, -\alpha)$-ditrail from $v$ to $r$. 
If $v\in S_2$ holds,  it contradicts either $v\not\in S_1$ or that $v$ is sublinear in $G$. 
For the case $v\in V(H)\setminus \{r\}$, let $R$ be a $(-\beta, -\alpha)$-ditrail in $H$ from $v$ to $r$. 
Then, $Q^{-1} + (u, uv, v) + R$ is a closed $(-\alpha, -\alpha)$-ditrail over $r$. 
This contradicts the assumption that $r$ is sublinear. 
Thus, \ref{item:shell:lgcut} follows. 
The statement \ref{item:shell:rcut} can be proved in a similar way by considering the concatenation of ditrails. 
The lemma is proved. 
\end{proof}

\section{Round Radials} 

\begin{definition} 
Let $\alpha\in\{+, -\}$. 
An $\alpha$-radial with sublinear root $r$ is said to be {\em round} 
if every vertex from $\oNei{G}{r}{-\alpha}$ is strong. 
\end{definition}

Lemma~\ref{lem:asg2neigh} is easily implied from  Lemma~\ref{lem:rrneigh2g}. 

\begin{lemma} \label{lem:asg2neigh} 
Let $\alpha\in\{+, -\}$, and let $G$ be a round $\alpha$-radial with root $r\in V(G)$. 
Then, every vertex from $\oNei{G}{r}{-\alpha}$ is contained in the almost strong ground. 
\end{lemma}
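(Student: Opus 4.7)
The plan is to observe that this lemma is an almost immediate consequence of Lemma~\ref{lem:rrneigh2g} combined with the definition of round. Since $G$ is a round $\alpha$-radial, the root $r$ is sublinear (this is part of the definition of round), so by Section~\ref{sec:ground4r:ground} the almost strong ground of $G$ exists.

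First I would fix an arbitrary vertex $v \in \oNei{G}{r}{-\alpha}$. By the definition of roundness, every such $v$ is strong in $G$. Next I would invoke Lemma~\ref{lem:rrneigh2g}: that lemma says precisely that if $v \in \oNei{G}{r}{-\alpha}$ is strong, then $v$ lies in the almost strong ground. Since $v$ was arbitrary, this yields the claim.

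There is essentially no obstacle here; the lemma is a direct specialization of Lemma~\ref{lem:rrneigh2g}, in which the ``sublinear'' case of that dichotomy is ruled out by hypothesis and only the ``strong'' case survives. The only thing to be mildly careful about is noting that the definition of round presupposes $r$ is sublinear, so the notion of almost strong ground referenced in the conclusion is well-defined.
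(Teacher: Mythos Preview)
Your proposal is correct and matches the paper's own proof exactly: the paper simply states that Lemma~\ref{lem:asg2neigh} is easily implied from Lemma~\ref{lem:rrneigh2g}, and your write-up spells out precisely this implication via the definition of round.
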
 

By Lemma~\ref{lem:asg2neigh}, 
an $\alpha$-radial with sublinear root $r$ is round 
if and only if every vertex from $\oNei{G}{r}{-\alpha}$ is contained in the almost strong ground.

\section{Contraction Lemmas} 

In this section, we provide Lemmas~\ref{lem:const4l} and \ref{lem:const4s} to be used in proving lemmas in Section~\ref{sec:decomposition}. 

\begin{lemma} \label{lem:const4l} 
Let $G$ be a bidirected graph, 
and let $H$ be an induced subgraph of $G$ with $r\in V(G)\cap V(H)$. 
Let $x\in V(G)\setminus V(H)$ and $\beta\in\{+, -\}$. 
\begin{rmenum} 
\item \label{item:const4l:r} 
Assume $\parNei{G}{V(G)\setminus V(H)} \subseteq \reachset{H}{r}{\alpha}{-\alpha}\setminus  \reachset{G}{r}{-\alpha}{-\alpha}$. 
Then, $G$ has a $(\beta, -\alpha)$-ditrail from $x$ to $r$ if and only if 
$G$ has a $(\beta, -\alpha)$-ditrail from $x$ to a vertex in $V(H)$ whose edges are disjoint from $E(H)$. 
\item \label{item:const4l:sr} 
Assume $\parNei{G}{V(G)\setminus V(H)} \subseteq \reachsets{H}{r}{\alpha} \setminus  \reachsets{G}{r}{-\alpha}$. 
Then, 
$G$ has a $\beta$-ditrail from $x$ to $r$ if and only if 
$G$ has a $(\beta, -\alpha)$-ditrail from $x$ to a vertex in $V(H)$ whose edges are disjoint from $E(H)$. 
\end{rmenum} 
\end{lemma}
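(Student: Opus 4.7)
My plan is to prove the two biconditionals symmetrically, each by splitting into an easy concatenation argument and a slightly more delicate trace-the-ditrail argument for the nontrivial direction.

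For the ``if'' direction of both \ref{item:const4l:r} and \ref{item:const4l:sr}, I would start from a ditrail $P$ from $x$ to some $y\in V(H)$ whose edges avoid $E(H)$, and whose sign at $y$ is $-\alpha$. Since the last edge of $P$ lies in $\parcut{G}{H}$, the vertex $y$ sits in $\parNei{G}{V(G)\setminus V(H)}$, so by hypothesis $y\in\reachset{H}{r}{\alpha}{-\alpha}$ in case \ref{item:const4l:r}, respectively $y\in\reachsets{H}{r}{\alpha}$ in case \ref{item:const4l:sr}. Pick a corresponding ditrail $Q$ of $H$ from $y$ to $r$ starting with sign $\alpha$ at $y$. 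Then at $y$ the incoming sign ($-\alpha$) and outgoing sign ($\alpha$) are opposite, so the concatenation $P+Q$ is a legitimate ditrail of the claimed form.

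For the ``only if'' direction, take a ditrail $P$ of the assumed type (a $(\beta,-\alpha)$-ditrail in case \ref{item:const4l:r}, a $\beta$-ditrail in case \ref{item:const4l:sr}) from $x$ to $r$. Trace $P$ from $x$ and let $y$ be the first vertex encountered in $V(H)$; such a $y$ exists because $r\in V(H)$. By choice of $y$, no edge of $xPy$ has both ends in $V(H)$, so $E(xPy)\cap E(H)=\emptyset$, and $y\in \parNei{G}{V(G)\setminus V(H)}$. It remains to show the sign of $y$ at the end of $xPy$ is $-\alpha$, which is the pivotal step. If instead this sign were $\alpha$, then the opposite-sign requirement at the vertex $y$ of the ditrail $P$ forces $yPr$ to begin with sign $-\alpha$ at $y$, making $yPr$ a $(-\alpha,-\alpha)$-ditrail in case \ref{item:const4l:r} (contradicting $y\notin\reachset{G}{r}{-\alpha}{-\alpha}$) or a $-\alpha$-ditrail in case \ref{item:const4l:sr} (contradicting $y\notin\reachsets{G}{r}{-\alpha}$). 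Hence $xPy$ is a $(\beta,-\alpha)$-ditrail of the desired type.

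There is no real obstacle: the only moment requiring care is the sign bookkeeping at the stitching vertex $y$, and this is precisely where the two halves of the hypothesis $\reachset{H}{r}{\alpha}{-\alpha}\setminus\reachset{G}{r}{-\alpha}{-\alpha}$ (respectively $\reachsets{H}{r}{\alpha}\setminus\reachsets{G}{r}{-\alpha}$) are used---the first half supplies the continuation within $H$ for the ``if'' direction, and the second half forbids the alternative sign for the ``only if'' direction. The two parts \ref{item:const4l:r} and \ref{item:const4l:sr} are essentially identical; I would write \ref{item:const4l:r} in detail and then indicate that \ref{item:const4l:sr} follows by the same argument with the terminal-sign constraint dropped.
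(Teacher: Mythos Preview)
Your proposal is correct and follows essentially the same route as the paper: the paper proves the ``only if'' direction of \ref{item:const4l:r} by tracing $P$ to the first vertex $y$ in $V(H)$ and ruling out the wrong terminal sign via $y\notin\reachset{G}{r}{-\alpha}{-\alpha}$, handles the ``if'' direction by concatenation, and dispatches \ref{item:const4l:sr} with the remark that the same discussion applies. Your write-up is somewhat more explicit about the sign bookkeeping and about why $y\in\parNei{G}{V(G)\setminus V(H)}$ (using that $H$ is induced), but the underlying argument is identical.
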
 
\begin{proof} 
First, we prove \ref{item:const4l:r}. 
For proving the sufficiency, let $P$ be a $(\beta, -\alpha)$-ditrail in $G$ from $x$ to $r$. 
Trace $P$ from $x$, and let $y$ be the first encountered vertex that is in $V(H)$.  
If $xPy$ is a $(\beta, \alpha)$-ditrail, then $yPr$ is a $(-\alpha, -\alpha)$-ditrail, 
which contradicts $y\not\in \reachset{G}{r}{-\alpha}{-\alpha}$.   
Hence, $xPy$ is a $(\beta, -\alpha)$-ditrail that satisfies the condition. 

The necessity can easily be proved by considering the concatenation of ditrails. 
This proves \ref{item:const4l:r}. 
The statement \ref{item:const4l:sr} can be proved by a similar discussion.  
\end{proof}

\begin{lemma} \label{lem:const4s} 
Let $G$ be a bidirected graph, 
and let $H$ be an induced subgraph of $G$ with $r\in V(G)\cap V(H)$. 
Let $x\in V(G)\setminus V(H)$ and $\beta\in\{+, -\}$. 
\begin{rmenum} 
\item \label{item:const4s:r} 
Assume $\parNei{G}{V(G)\setminus V(H)} \subseteq \reachset{H}{r}{\alpha}{-\alpha} \cap \reachset{H}{r}{-\alpha}{-\alpha}$. 
Then, $G$ has an $(\beta, -\alpha)$-ditrail from $x$ to $r$ if and only if 
$G$ has a $\beta$-ditrail from $x$ to a vertex in $V(H)$ whose edges are disjoint from $E(H)$. 
\item \label{item:const4s:sr} 
Assume $\parNei{G}{V(G)\setminus V(H)} \subseteq \reachsets{H}{r}{\alpha} \cap \reachsets{H}{r}{-\alpha}$. 
Then, 
$G$ has a $\beta$-ditrail from $x$ to $r$ if and only if 
$G$ has a $\beta$-ditrail from $x$ to a vertex in $V(H)$ whose edges are disjoint from $E(H)$. 
\end{rmenum} 
\end{lemma}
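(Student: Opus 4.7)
My plan is to adapt the trace-and-concatenate argument of Lemma~\ref{lem:const4l} to the symmetric reachability setting here, taking advantage of the strengthened hypothesis that every neighbor of $V(G)\setminus V(H)$ can reach $r$ in $H$ with a ditrail of \emph{either} starting sign. I would prove \ref{item:const4s:r} first; then \ref{item:const4s:sr} follows by the same argument in the semiradial setting.

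For the necessity direction of \ref{item:const4s:r}, I would take a $(\beta,-\alpha)$-ditrail $P$ from $x$ to $r$ in $G$, trace $P$ from $x$, and let $y$ be the first encountered vertex in $V(H)$ (which exists since $r\in V(H)$). Every vertex of $xPy$ other than $y$ lies in $V(G)\setminus V(H)$ and, because $H$ is induced, no edge of $xPy$ belongs to $E(H)$. Thus $xPy$ is a $\beta$-ditrail from $x$ to $y\in V(H)$ with edges disjoint from $E(H)$, as required; in contrast to Lemma~\ref{lem:const4l} \ref{item:const4l:r}, there is no need here to control the ending sign at $y$.

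The sufficiency direction is where the symmetric hypothesis does the real work. Let $P$ be a $\beta$-ditrail from $x$ to a vertex $y\in V(H)$ whose edges are disjoint from $E(H)$, and let $\gamma\in\{+,-\}$ be the ending sign of $P$ at $y$. The predecessor of $y$ on $P$ lies in $V(G)\setminus V(H)$, so $y\in \parNei{G}{V(G)\setminus V(H)}$; by hypothesis $y\in \reachset{H}{r}{-\gamma}{-\alpha}$, regardless of the value of $\gamma$. Choosing a $(-\gamma,-\alpha)$-ditrail $Q$ of $H$ from $y$ to $r$ and concatenating, I obtain a $(\beta,-\alpha)$-ditrail $P+Q$ from $x$ to $r$ in $G$ (the signs $\gamma$ and $-\gamma$ at $y$ are opposite, so $y$ is a legitimate internal vertex of the concatenation).

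Part \ref{item:const4s:sr} proceeds along exactly the same lines, using $\reachsets{H}{r}{\alpha}\cap \reachsets{H}{r}{-\alpha}$ in place of the radial reachability sets; the absence of an ending-sign restriction at $r$ makes the matching step slightly simpler. The only delicate point in the whole argument is the sign-matching at $y$ in the sufficiency direction, which is precisely what the symmetric hypothesis is designed to accommodate; I do not expect any genuine obstacle beyond this bookkeeping.
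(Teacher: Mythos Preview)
Your proposal is correct and takes essentially the same trace-and-concatenate approach as the paper's proof; you simply spell out the sign-matching at $y$ in the concatenation direction that the paper leaves as ``considering the concatenation of ditrails.'' The only cosmetic difference is that your labeling of the two directions as ``necessity'' and ``sufficiency'' is swapped relative to the paper's usage.
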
 
\begin{proof} 
We first prove the sufficiency of \ref{item:const4s:r}. 
Let $P$ be a $(\beta, -\alpha)$-ditrail from $x$ to $r$. 
Trace $P$ from $x$, and let $y$ be the first encountered vertex that is in $V(H)$. 
Then, $xPy$ is a desired ditrail. 
The necessity  can be proved by considering the concatenation of ditrails. 
This proves \ref{item:const4s:r}. 
The statement \ref{item:const4s:sr} can be proved in a similar way. 
\end{proof}

\section{Decomposition of Radials and Semiradials}  \label{sec:decomposition} 
\subsection{From General Semiradials to Sharp Semiradials}

Lemma~\ref{lem:const4s} \ref{item:const4s:sr} easily implies Lemma~\ref{lem:srpath2abg}.

\begin{lemma} \label{lem:srpath2abg} 
Let $\alpha\in\{+, -\}$. 
Let $G$ be an $\alpha$-semiradial with root $r\in V(G)$. 
Let $H$ be the absolute ground of $G$. 
Let $x\in V(G)$ and $\beta\in\{+, -\}$. 
Then, there is a $\beta$-ditrail from $x$ to $r$ 
if and only if 
there is a $\beta$-ditrail from $x$ to a vertex in $V(H)$ whose edges are disjoint from $E(H)$. 
\end{lemma}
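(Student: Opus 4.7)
The plan is to reduce the statement to Lemma~\ref{lem:const4s} \ref{item:const4s:sr}. The work consists of (a) verifying that the hypothesis of that lemma applies with $H$ the absolute ground of $G$, and (b) handling the boundary case $x \in V(H)$, which is not covered by the cited lemma.

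For (a), recall that the absolute ground $H$ is by definition an absolute $\alpha$-semiradial with root $r$, so every vertex of $V(H)$ is absolute in $H$, that is, it lies in $\reachsets{H}{r}{\alpha} \cap \reachsets{H}{r}{-\alpha}$. Since trivially $\parNei{G}{V(G)\setminus V(H)} \subseteq V(H)$, the inclusion required by the hypothesis of Lemma~\ref{lem:const4s} \ref{item:const4s:sr} is automatic. Applying that lemma directly then gives the desired equivalence for every $x \in V(G)\setminus V(H)$ and every $\beta \in \{+,-\}$.

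For (b), when $x\in V(H)$, the forward implication is trivial: take $x$ itself as the endpoint in $V(H)$, realized by the empty ditrail, whose edge set is vacuously disjoint from $E(H)$. For the converse, given a $\beta$-ditrail $Q$ from $x$ to some $y \in V(H)$ whose edges are disjoint from $E(H)$, we use that $y$ is absolute in $H$ to choose a ditrail $R$ within $H$ from $y$ to $r$ whose starting sign at $y$ is opposite to the ending sign of $Q$ at $y$; concatenating $Q$ and $R$ then produces a $\beta$-ditrail from $x$ to $r$ in $G$. The degenerate subcase $x = y$ follows immediately from $x$ being absolute in $H$.

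I do not expect any serious obstacle in this proof: the hypothesis of Lemma~\ref{lem:const4s} \ref{item:const4s:sr} is literally the defining property of the absolute ground, so the main lemma plugs in with no side work. The only minor technicality is the sign compatibility at the junction vertex when concatenating ditrails in the boundary case $x\in V(H)$, which is resolved cleanly by selecting the correct one of the two guaranteed ditrails of $H$ from $y$ to $r$.
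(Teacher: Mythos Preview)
Your proposal is correct and follows the same approach as the paper, which simply states that Lemma~\ref{lem:const4s}~\ref{item:const4s:sr} ``easily implies'' the result. Your explicit verification of the hypothesis and your separate treatment of the boundary case $x\in V(H)$ (which is not literally covered by Lemma~\ref{lem:const4s}) are details the paper leaves implicit; note in particular that for $x\in V(H)$ both sides of the equivalence are trivially true, since $H$ is absolute and the empty ditrail at $x$ witnesses the second condition.
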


Lemmas~\ref{lem:abgnei2lin} and \ref{lem:srpath2abg} easily imply Lemma~\ref{lem:abg2lg}.

\begin{lemma} \label{lem:abg2lg}
Let $\alpha\in\{+, -\}$. 
Let $G$ be an $\alpha$-semiradial with root $r\in V(G)$. 
Let $H$ be the absolute ground of $G$. 
Then, $G/H$ is a sharp $\alpha$-semiradial with root $h$, where $h$ is the contracted vertex that  corresponds to $H$. 
\end{lemma}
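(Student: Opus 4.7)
The plan is to check two things about $G/H$: that it is an $\alpha$-semiradial with root $h$, and that this semiradial is sharp, i.e., every neighbor of $h$ is linear with respect to $h$.

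First I would verify the $\alpha$-semiradial property. Pick any $x \in V(G/H) \setminus \{h\}$, so $x \in V(G)\setminus V(H)$. Since $G$ is an $\alpha$-semiradial with root $r$, we have $x \in \reachsets{G}{r}{\alpha}$, so applying Lemma~\ref{lem:srpath2abg} with $\beta = \alpha$ yields an $\alpha$-ditrail in $G$ from $x$ to some vertex of $V(H)$ whose edges lie outside $E(H)$. Viewed in $G/H$, this is precisely an $\alpha$-ditrail from $x$ to $h$. Hence $G/H$ is an $\alpha$-semiradial rooted at $h$.

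Second, I would prove sharpness. Let $v$ be any neighbor of $h$ in $G/H$. By the definition of contraction, $v \in V(G)\setminus V(H)$ is adjacent in $G$ to some vertex of $V(H)$; that is, $v \in \parNei{G}{V(H)}$. By Lemma~\ref{lem:abgnei2lin}, $v$ is linear in $G$ with respect to $r$, so $v\in \reachsets{G}{r}{\alpha}\setminus \reachsets{G}{r}{-\alpha}$. To show $v$ is linear in $G/H$ with respect to $h$, suppose to the contrary that there is a $-\alpha$-ditrail $P$ from $v$ to $h$ in $G/H$. Lifting to $G$, $P$ corresponds to a $-\alpha$-ditrail $\tilde P$ in $G$ from $v$ to some vertex $w\in V(H)$, using only edges of $E(G)\setminus E(H)$. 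Since $H$ is an absolute $\alpha$-semiradial with root $r$, the vertex $w$ is absolute in $H$, so for any sign $\gamma\in\{+,-\}$ there is a $\gamma$-ditrail in $H$ from $w$ to $r$. Choose $\gamma$ opposite to the sign of $w$ at the last edge of $\tilde P$ and call this $H$-ditrail $Q$. Since $E(\tilde P)\cap E(Q) = \emptyset$, the concatenation $\tilde P + Q$ is a $-\alpha$-ditrail from $v$ to $r$ in $G$, giving $v\in \reachsets{G}{r}{-\alpha}$, a contradiction. Hence $v\in \reachsets{G/H}{h}{\alpha}\setminus \reachsets{G/H}{h}{-\alpha}$, so $v$ is linear in $G/H$.

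The main obstacle is the second half, namely ensuring that linearity survives contraction. The crux is the edge-disjointness needed to concatenate the lifted ditrail $\tilde P$ in $E(G)\setminus E(H)$ with an $H$-internal ditrail $Q$; this is precisely what lets the absoluteness of $w$ inside $H$ convert the hypothetical $-\alpha$ reachability of $h$ into $-\alpha$ reachability of $r$ in $G$, contradicting Lemma~\ref{lem:abgnei2lin}. The rest is essentially bookkeeping of how ditrails in $G$ correspond to ditrails in $G/H$.
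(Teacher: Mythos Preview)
Your proof is correct and follows essentially the same route as the paper: Lemma~\ref{lem:srpath2abg} for the $\alpha$-semiradial property, and Lemma~\ref{lem:abgnei2lin} for sharpness. The only difference is that for sharpness the paper tacitly reuses Lemma~\ref{lem:srpath2abg} with $\beta=-\alpha$ to transfer linearity from $G$ to $G/H$, whereas you redo that step by hand via the lift-and-concatenate argument; your argument is exactly the concatenation direction of Lemma~\ref{lem:srpath2abg} (equivalently Lemma~\ref{lem:const4s}~\ref{item:const4s:sr}), so nothing new is needed. One small tightening: when you lift $P$ to $\tilde P$, you should take the initial segment of $P$ up to its first visit to $h$, so that the lifted walk is genuinely a ditrail in $G$ ending at a single $w\in V(H)$.
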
 
\begin{proof} %lem:abg2lg
Lemma~\ref{lem:srpath2abg} easily implies that $G/H$ is an $\alpha$-semiradial with root $h$. 
Lemma~\ref{lem:abgnei2lin} further implies that it is sharp. 
The lemma is proved. 
\end{proof}

\subsection{From General Radials with Strong Root to Sharp Semiradials}

Lemma~\ref{lem:const4s} \ref{item:const4s:r} easily implies Lemma~\ref{lem:rpath2stg}.

\begin{lemma} \label{lem:rpath2stg} 
Let $\alpha \in \{+, -\}$.  
Let $G$ be an $\alpha$-radial with strong root $r$.  
Let $H$ be the strong ground of $G$. 
Let $x\in V(G)$ and $\beta \in \{+, -\}$. 
Then, $G$ has a $(\beta, -\alpha)$-ditrail from $x$ to $r$ 
if and only if $G$ has a $\beta$-ditrail from $x$ to a vertex in $V(H)$ whose edges are disjoint from $E(H)$. 
\end{lemma}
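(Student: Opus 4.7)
The plan is to derive the equivalence directly from Lemma~\ref{lem:const4s}~\ref{item:const4s:r}. First I would dispose of the case $x \in V(H)$: since $H$ is a strong $\alpha$-radial with root $r$, $x \in \reachset{H}{r}{\beta}{-\alpha}$ for each $\beta \in \{+,-\}$, so there is a $(\beta, -\alpha)$-ditrail from $x$ to $r$ inside $H$, giving the left-hand side, while the trivial ditrail at $x$ gives the right-hand side. Thus the equivalence is immediate in this case.

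For the remaining case $x \in V(G) \setminus V(H)$, I would apply Lemma~\ref{lem:const4s}~\ref{item:const4s:r} to the pair $(G, H)$. The only thing to check is the hypothesis $\parNei{G}{V(G)\setminus V(H)} \subseteq \reachset{H}{r}{\alpha}{-\alpha} \cap \reachset{H}{r}{-\alpha}{-\alpha}$. Every vertex of $\parNei{G}{V(G)\setminus V(H)}$ lies in $V(H)$ by definition. Since $H$ is a strong $\alpha$-radial with root $r$, every vertex of $V(H)$ is $\alpha$-strong in $H$, i.e.\ belongs to $\reachset{H}{r}{\alpha}{-\alpha} \cap \reachset{H}{r}{-\alpha}{-\alpha}$. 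Hence the hypothesis is satisfied, and Lemma~\ref{lem:const4s}~\ref{item:const4s:r} yields precisely the asserted equivalence.

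The reduction to Lemma~\ref{lem:const4s}~\ref{item:const4s:r} is essentially mechanical, so I do not anticipate a genuine obstacle; the only subtlety is to keep the $x \in V(H)$ case separate, since Lemma~\ref{lem:const4s} is stated for $x$ outside $V(H)$. Once that is peeled off, the strong-radial property of $H$ makes the required containment of neighborhoods automatic.
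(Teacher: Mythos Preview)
Your proposal is correct and follows exactly the paper's approach: the paper simply states that Lemma~\ref{lem:const4s}~\ref{item:const4s:r} easily implies Lemma~\ref{lem:rpath2stg}, and you have spelled out precisely the verification of the hypothesis (that $V(H) \subseteq \reachset{H}{r}{\alpha}{-\alpha} \cap \reachset{H}{r}{-\alpha}{-\alpha}$ because $H$ is a strong $\alpha$-radial) together with the trivial case $x \in V(H)$ that Lemma~\ref{lem:const4s} does not cover.
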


Lemmas~\ref{lem:abgneigh2sublinear} and \ref{lem:rpath2stg} imply Lemma~\ref{lem:stg2lg}.

\begin{lemma} \label{lem:stg2lg} 
Let $\alpha \in \{+, -\}$.  
Let $G$ be an $\alpha$-radial with strong root $r$.  
Let $H$ be the strong ground of $G$. 
Then, $G/H$ is a sharp $\alpha$-semiradial with root $h$, 
where $h$ is the contracted vertex that corresponds to $H$. 
\end{lemma}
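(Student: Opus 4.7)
My plan is to derive the statement in two stages, handling first the semiradial property and then sharpness, with both stages pivoting on the two Lemmas cited just before the statement together with the standard contraction-ditrail correspondence.

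For the semiradial property, I would start with an arbitrary vertex of $V(G/H) \setminus \{h\}$, which is a vertex $x \in V(G) \setminus V(H)$. Since $G$ is an $\alpha$-radial with root $r$, we have $x \in \reachset{G}{r}{\alpha}{-\alpha}$. Applying Lemma~\ref{lem:rpath2stg} with $\beta = \alpha$ to $x$ yields an $\alpha$-ditrail in $G$ from $x$ to some vertex in $V(H)$ whose edges are disjoint from $E(H)$. Because the contraction $G/H$ erases precisely the edges in $E(H)$ and identifies $V(H)$ with $h$, this ditrail descends to an $\alpha$-ditrail from $x$ to $h$ in $G/H$. Hence $G/H$ is an $\alpha$-semiradial with root $h$.

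For sharpness, I need that every neighbor of $h$ in $G/H$ is linear there. Neighbors of $h$ in $G/H$ are exactly vertices in $\parNei{G}{V(H)}$, so by Lemma~\ref{lem:abgneigh2sublinear}\ref{item:str} each such $v$ is sublinear in $G$, meaning $v \in \reachset{G}{r}{\alpha}{-\alpha}\setminus \reachset{G}{r}{-\alpha}{-\alpha}$. Since $v$ already lies in $\reachsets{G/H}{h}{\alpha}$ by the previous paragraph, it suffices to rule out $v \in \reachsets{G/H}{h}{-\alpha}$. Suppose for contradiction such a $-\alpha$-ditrail from $v$ to $h$ exists in $G/H$. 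Lifting back to $G$, its edges are all outside $E(H)$ and it ends at some vertex of $V(H)$, so we obtain a $-\alpha$-ditrail in $G$ from $v$ to a vertex in $V(H)$ that is disjoint from $E(H)$. Then Lemma~\ref{lem:rpath2stg} with $\beta = -\alpha$ delivers a $(-\alpha, -\alpha)$-ditrail from $v$ to $r$ in $G$, contradicting $v \notin \reachset{G}{r}{-\alpha}{-\alpha}$.

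The only delicate point, and what I consider the main obstacle, is the bookkeeping in the lifting/descent step: we must be sure that the correspondence between ditrails in $G/H$ and ditrails in $G$ whose edges avoid $E(H)$ preserves the relevant signs at endpoints, in both directions. Once this is in place, both stages reduce to direct applications of Lemmas~\ref{lem:rpath2stg} and \ref{lem:abgneigh2sublinear}\ref{item:str}, and the argument is short. I would accordingly write the proof in two short paragraphs mirroring the structure above, invoking the two lemmas by name and pointing to the contraction correspondence explicitly.
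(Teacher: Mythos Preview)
Your proposal is correct and follows essentially the same route as the paper: the paper's proof also consists of exactly the two steps you outline, invoking Lemma~\ref{lem:rpath2stg} for the semiradial property and Lemma~\ref{lem:abgneigh2sublinear}~\ref{item:str} for sharpness, just stated more tersely. Your explicit unpacking of the contraction--ditrail correspondence (truncating at the first entry into $V(H)$ in either direction) is the content hidden behind the paper's one-line citations.
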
 
\begin{proof} 
Lemma~\ref{lem:rpath2stg} implies that $G/H$ is an $\alpha$-semiradial with root $h$. 
Lemma~\ref{lem:abgneigh2sublinear} implies that every neighbor of $h$ is linear in $G/H$. 
Hence, $G/H$ is sharp. 
\end{proof}

\subsection{From General Radials with  Sublinear Root to Sharp Semiradials}

Lemma~\ref{lem:rrdel} can easily be implied from Lemma~\ref{lem:r2delete}.  
We use Lemma~\ref{lem:rrdel} to prove Lemmas~\ref{lem:path2shell} and \ref{lem:rpath2asg}.

\begin{lemma} \label{lem:rrdel} 
Let $\alpha\in\{+, -\}$, and let $G$ be an $\alpha$-radial with sublinear root $r\in V(G)$. 
Let $F \subseteq \ocut{G}{r}{\alpha}$. 
Then, $\reachset{G}{r}{\beta}{-\alpha} = \reachset{G - F}{r}{\beta}{-\alpha}$ for each $\beta\in\{+, -\}$. 
\end{lemma}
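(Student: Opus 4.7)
The plan is to deduce Lemma~\ref{lem:rrdel} as a direct specialization of Lemma~\ref{lem:r2delete}, choosing $H := G[\{r\}]$, the induced subgraph on the single vertex $r$. This single-vertex subgraph is trivially an $\alpha$-radial with root $r$, and it satisfies $\ocut{G}{H}{\alpha} = \ocut{G}{r}{\alpha}$, so the hypothesis $F \subseteq \ocut{G}{r}{\alpha}$ is exactly the hypothesis $F \subseteq \ocut{G}{H}{\alpha}$ required by Lemma~\ref{lem:r2delete}.

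The only nontrivial hypothesis of Lemma~\ref{lem:r2delete} that must be checked is
$\parNei{G}{V(G)\setminus V(H)} \cap \reachset{G}{r}{-\alpha}{-\alpha} = \emptyset$. Since $V(H) = \{r\}$, the left-hand neighborhood is contained in $\{r\}$, and the assumption that $r$ is sublinear in $G$ means by definition $r \in \reachset{G}{r}{\alpha}{-\alpha} \setminus \reachset{G}{r}{-\alpha}{-\alpha}$; in particular $r \notin \reachset{G}{r}{-\alpha}{-\alpha}$. Hence the intersection is empty, and the hypothesis is satisfied.

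With these choices, Lemma~\ref{lem:r2delete} yields $\reachset{G - F}{r}{\beta}{-\alpha} = \reachset{G}{r}{\beta}{-\alpha}$ for each $\beta \in \{+, -\}$, which is exactly the desired conclusion.

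I do not expect any real obstacle: the sublinearity of $r$ is precisely engineered to supply the cut condition needed by Lemma~\ref{lem:r2delete}, and the rest is bookkeeping. The only mild point of caution is the implicit use of the convention from the preceding paper that a one-vertex graph qualifies as an $\alpha$-radial, which is standard in this setting.
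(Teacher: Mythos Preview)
Your proposal is correct and follows essentially the same approach as the paper: specialize Lemma~\ref{lem:r2delete} to the trivial induced subgraph $H=G[\{r\}]$, using sublinearity of $r$ to ensure $\{r\}\cap\reachset{G}{r}{-\alpha}{-\alpha}=\emptyset$ so that the neighborhood hypothesis is satisfied. (Indeed, the paper's one-line proof appears to contain a typo, writing ``$\{r\}\subseteq\reachset{G}{r}{-\alpha}{-\alpha}$'' where the parallel proof of Lemma~\ref{lem:rradd} makes clear the intended condition is $\{r\}\cap\reachset{G}{r}{-\alpha}{-\alpha}=\emptyset$; your version states the correct hypothesis.)
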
 
\begin{proof} 
Because $\{r\} \subseteq \reachset{G}{r}{-\alpha}{-\alpha}$ holds, 
Lemma~\ref{lem:r2delete} implies the claim. 
\end{proof}

Lemmas~\ref{lem:rrneigh2g}, \ref{lem:const4s} \ref{item:const4s:r}, and \ref{lem:rrdel} imply Lemma~\ref{lem:path2shell}.

\begin{lemma} \label{lem:path2shell} 
Let $\alpha \in \{+, -\}$.  
Let $G$ be an $\alpha$-radial with sublinear root $r$.  
Let $I$ be the extended ground of $G$.  
Let $x\in V(G)\setminus V(I) $, and let $\beta\in \{+, -\}$. 
Then, the following statements are equivalent: 
\begin{rmenum} 
\item \label{item:path2shell:r} There is a $(\beta, -\alpha)$-ditrail from $x$ to $r$ in $G$; 
\item \label{item:path2shell:del} there is a $(\beta, -\alpha)$-ditrail from $x$ to $r$ in $G - E_G[r, V(G)\setminus V(I)]$; and, 
\item \label{item:path2shell:cut} there is a $(\beta, -\alpha)$-ditrail from $x$ to a vertex in $V(H)\setminus \{r\}$ whose edges are disjoint from $E(I) \cup E_G[r, V(G)\setminus V(I)]$. 
\end{rmenum} 
\end{lemma}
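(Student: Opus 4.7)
My plan is to prove the equivalences around the cycle (ii)$\Rightarrow$(i), (i)$\Rightarrow$(ii), (iii)$\Rightarrow$(ii), (ii)$\Rightarrow$(iii). Throughout let $H$ denote the almost strong ground of $G$ appearing implicitly in (iii) through the definition of $I$, and write $G' := G - E_G[r, V(G)\setminus V(I)]$. The implication (ii)$\Rightarrow$(i) is immediate since $G'$ is a subgraph of $G$.

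For (i)$\Rightarrow$(ii), I apply Lemma~\ref{lem:rrneigh2g}: every vertex of $\oNei{G}{r}{-\alpha}$ is either strong and hence in the almost strong ground, or sublinear and hence in the first shell, and in either case lies in $V(I)$. Thus $E_G[r, V(G)\setminus V(I)] \subseteq \ocut{G}{r}{\alpha}$, so Lemma~\ref{lem:rrdel} yields $\reachset{G'}{r}{\beta}{-\alpha} = \reachset{G}{r}{\beta}{-\alpha}$ for each $\beta$. For (iii)$\Rightarrow$(ii), the endpoint $y\in V(H)\setminus\{r\}$ of the ditrail $Q$ from (iii) is strong in $H$ by the almost-strong property, so $H$ admits an $(\alpha,-\alpha)$-ditrail $R$ from $y$ to $r$; the signs at $y$ match for concatenation, and $Q+R$ is a $(\beta,-\alpha)$-ditrail from $x$ to $r$ in $G'$, because $E(R) \subseteq E(H) \subseteq E(I)$ is disjoint from $E_G[r, V(G)\setminus V(I)]$.

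The core step is (ii)$\Rightarrow$(iii). Starting from a $(\beta,-\alpha)$-ditrail $P$ from $x$ to $r$ in $G'$, I invoke Lemma~\ref{lem:const4s}~\ref{item:const4s:r} with $H$ as the induced subgraph of $G'$; its hypothesis $\parNei{G'}{V(G')\setminus V(H)} \subseteq \reachset{H}{r}{\alpha}{-\alpha}\cap \reachset{H}{r}{-\alpha}{-\alpha}$ holds because every non-root vertex of $H$ is strong in $H$, combined with the fact that $r$ has no edges to $V(G)\setminus V(I)$ in $G'$ (any residual $r$-shell adjacencies being controlled through Lemma~\ref{lem:shell}). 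The lemma produces a $\beta$-ditrail from $x$ to some vertex in $V(H)$ whose edges avoid $E(H)$. To upgrade this to the full statement of (iii), I trace $P$ to its first visit $y$ to $V(I)$: the prefix $xPy$ has edges automatically disjoint from $E(I)\cup E_G[r, V(G)\setminus V(I)]$ (since predecessors of $y$ lie outside $V(I)$ and $P \subseteq G'$), and a case analysis on the end sign at $y$, using that the tail $yPr$ is a ditrail in $G'$ and that shell vertices cannot admit a $(-\alpha,-\alpha)$-ditrail to $r$ by sublinearity, locates $y$ in $V(H)\setminus\{r\}$ with end sign $-\alpha$.

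The principal obstacle is this refinement: Lemma~\ref{lem:const4s}~\ref{item:const4s:r} only guarantees a $\beta$-ditrail with arbitrary end sign ending anywhere in $V(H)$, while (iii) demands end sign $-\alpha$ and endpoint in $V(H)\setminus\{r\}$. The shell decomposition in Lemmas~\ref{lem:shellneigh2strong} and \ref{lem:shell} is what bridges the gap.
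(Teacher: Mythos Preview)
Your overall strategy coincides with the paper's: establish (i)$\Leftrightarrow$(ii) via Lemma~\ref{lem:rrneigh2g} together with Lemma~\ref{lem:rrdel}, and handle (ii)$\Leftrightarrow$(iii) through Lemma~\ref{lem:const4s}~\ref{item:const4s:r}. The paper's own proof of (ii)$\Leftrightarrow$(iii) is a one-line appeal to Lemma~\ref{lem:const4s}~\ref{item:const4s:r}; your explicit concatenation for (iii)$\Rightarrow$(ii) is correct and more detailed than what the paper writes.

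Your argument for (ii)$\Rightarrow$(iii), however, has a real gap that you flag but do not close. First, applying Lemma~\ref{lem:const4s}~\ref{item:const4s:r} with the almost strong ground $H$ as the induced subgraph of $G'$ is not justified: the required hypothesis $\parNei{G'}{V(G')\setminus V(H)} \subseteq \reachset{H}{r}{\alpha}{-\alpha}\cap \reachset{H}{r}{-\alpha}{-\alpha} = V(H)\setminus\{r\}$ fails, because the shell lies in $V(G')\setminus V(H)$ and $r$ remains adjacent to shell vertices in $G'$ (only $E_G[r, V(G)\setminus V(I)]$ was deleted). Lemma~\ref{lem:shell} constrains the \emph{signs} on $r$--shell edges but does not remove the adjacency, so your parenthetical ``controlled through Lemma~\ref{lem:shell}'' does not rescue the hypothesis. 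Second, your first-visit tracing to $y\in V(I)$ does not give the conclusion you assert. If the end sign of $xPy$ at $y$ is $\alpha$, then $yPr$ is a $(-\alpha,-\alpha)$-ditrail and sublinearity of shell vertices forces $y\in V(H)\setminus\{r\}$ --- but then the end sign is $\alpha$, not $-\alpha$. If the end sign is $-\alpha$, then $yPr$ is $(\alpha,-\alpha)$ and nothing prevents $y$ from lying in the shell rather than in $V(H)\setminus\{r\}$. Neither branch simultaneously yields $y\in V(H)\setminus\{r\}$ and end sign $-\alpha$, and Lemmas~\ref{lem:shellneigh2strong} and \ref{lem:shell} do not supply the missing step. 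The paper's proof, being a bare citation of Lemma~\ref{lem:const4s}~\ref{item:const4s:r}, does not spell this out either, so the intended route is to apply that lemma with $I$ (not $H$) as the subgraph and read off the refinement from $\reachset{I}{r}{-\alpha}{-\alpha}=V(H)\setminus\{r\}$; you should make that explicit rather than working with $H$.
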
 
\begin{proof} 
Lemma~\ref{lem:rrneigh2g} implies $E_G[r, V(G)\setminus V(I)] \subseteq \ocut{G}{r}{\alpha}$. 
Hence, by Lemma~\ref{lem:rrdel}, 
 the statements~\ref{item:path2shell:r} and \ref{item:path2shell:del} are equivalent. 
It is easily confirmed from Lemma~\ref{lem:const4s} \ref{item:const4s:r} 
that \ref{item:path2shell:del} and \ref{item:path2shell:cut} are equivalent. 
The lemma is proved. 
\end{proof}

Lemmas~\ref{lem:shellneigh2strong} and \ref{lem:path2shell} imply Lemma~\ref{lem:shell2astg}.

\begin{lemma} \label{lem:shell2astg} 
Let $\alpha \in \{+, -\}$.  
Let $G$ be an $\alpha$-radial with sublinear root $r$.  
Let $I$ be the extended ground of $G$.  
Then, $G/I$ is a round $\alpha$-radial with sublinear root $i$, where $i$ is the contracted vertex that corresponds to $I$. 
\end{lemma}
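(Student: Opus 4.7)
The plan is to establish three properties in turn: that $G/I$ is an $\alpha$-radial with root $i$, that $i$ is sublinear in $G/I$, and that $G/I$ is round. The skeleton should mirror the proofs of Lemmas~\ref{lem:abg2lg} and~\ref{lem:stg2lg}: use Lemma~\ref{lem:path2shell} to obtain the reachability equivalence needed for radiality, and then apply Lemma~\ref{lem:shellneigh2strong} to control the neighbor structure at $i$.

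To show $G/I$ is an $\alpha$-radial with root $i$, pick $x \in V(G)\setminus V(I)$. Since $G$ is an $\alpha$-radial we have $x \in \reachset{G}{r}{\alpha}{-\alpha}$, so by Lemma~\ref{lem:path2shell} \ref{item:path2shell:cut} there is an $(\alpha, -\alpha)$-ditrail from $x$ to some vertex of $V(H)\setminus \{r\}$ whose edges avoid $E(I) \cup E_G[r, V(G)\setminus V(I)]$; after contraction this becomes an $(\alpha, -\alpha)$-ditrail from $x$ to $i$ in $G/I$. For roundness, note that $\oNei{G/I}{i}{-\alpha}$ corresponds bijectively to $\oNei{G}{I}{-\alpha}$, which by Lemma~\ref{lem:shellneigh2strong} \ref{item:shellneigh2strong:round} consists entirely of vertices strong in $G$. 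The same use of Lemma~\ref{lem:path2shell}, now applied with each $\beta \in \{+, -\}$, also yields for every $x \in V(G)\setminus V(I)$ the equivalence between $x \in \reachset{G}{r}{\beta}{-\alpha}$ and $x \in \reachset{G/I}{i}{\beta}{-\alpha}$, so strongness in $G$ is preserved under the contraction.

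The sublinearity of $i$ in $G/I$ is the delicate part. Arguing by contradiction, suppose a closed $(-\alpha, -\alpha)$-ditrail $C$ exists at $i$ in $G/I$; after reducing to a sub-ditrail that visits $i$ only at its endpoints, $C$ lifts to a $(-\alpha, -\alpha)$-ditrail $C^G$ in $G$ from some $u \in V(I)$ to some $v \in V(I)$, whose first and last edges lie in $\ocut{G}{I}{-\alpha}$. Since $u, v \in V(I)$ and $I$ is an $\alpha$-radial with root $r$, there are $(\alpha, -\alpha)$-ditrails $Q_u$ and $Q_v$ in $I$ from $u$ and $v$ to $r$. A sign check shows that $Q_u^{-1} + C^G + Q_v$ is a closed $(-\alpha, -\alpha)$-walk at $r$ in $G$, from which a closed $(-\alpha, -\alpha)$-ditrail at $r$ can then be extracted, contradicting the fact that $r$ is sublinear in $G$.

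The main obstacle is ensuring edge-distinctness in this concatenation: the ditrails $Q_u$ and $Q_v$ both lie in $E(I)$ and may share edges, and more generally the lift of $C$ must be arranged so that the resulting closed walk at $r$ genuinely contains a closed $(-\alpha, -\alpha)$-ditrail. I expect this to be handled either by choosing $Q_u$ and $Q_v$ carefully, exploiting the decomposition of the extended ground provided by Lemma~\ref{lem:shell} into the almost strong ground $H$, a sublinear $\alpha$-radial on $S_1 \cup \{r\}$, and a linear $\alpha$-semiradial on $S_2 \cup V(H)$, or by a symmetric-difference style reduction that extracts an edge-simple ditrail from the closed walk.
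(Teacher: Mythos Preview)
Your treatment of radiality and roundness is correct and matches the paper: Lemma~\ref{lem:path2shell} gives the reachability equivalence needed for $G/I$ to be an $\alpha$-radial with root $i$ and for strongness to transfer, and Lemma~\ref{lem:shellneigh2strong}~\ref{item:shellneigh2strong:round} handles roundness.

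The sublinearity argument, however, takes an unnecessary detour and leaves a real gap. Once you have reduced to a closed $(-\alpha,-\alpha)$-ditrail at $i$ that visits $i$ only at its endpoints and lifted it to $C^G$, you are done: $C^G$ is precisely a simple $(-\alpha,-\alpha)$-diear relative to $I$, and Lemma~\ref{lem:shellneigh2strong}~\ref{item:shellneigh2strong:r2sublin} says no such diear exists. This is exactly how the paper proceeds. You invoke only part~\ref{item:shellneigh2strong:round} of that lemma and overlook part~\ref{item:shellneigh2strong:r2sublin}, which is the statement tailor-made for this step.

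Your proposed route via the concatenation $Q_u^{-1} + C^G + Q_v$ is not only longer but genuinely problematic, as you yourself note. The edge-distinctness issue is real: if, say, $u = v$ and $Q_u = Q_v$, the symmetric difference collapses to $C^G$ alone, a closed ditrail at $u$ rather than at $r$, and no contradiction with the sublinearity of $r$ is immediate. Neither of your suggested fixes (careful choice of $Q_u, Q_v$ via Lemma~\ref{lem:shell}, or a symmetric-difference extraction) is straightforward, and neither is needed once you use Lemma~\ref{lem:shellneigh2strong}~\ref{item:shellneigh2strong:r2sublin} directly. One small additional care point: the reduction to a sub-ditrail visiting $i$ only at its endpoints does not automatically preserve the $(-\alpha,-\alpha)$ signature; a short induction on length (peeling off the first return to $i$) is needed to guarantee such a sub-ditrail exists with both end-signs equal to $-\alpha$.
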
 
\begin{proof} 
Lemmas~\ref{lem:shellneigh2strong} \ref{item:shellneigh2strong:r2sublin} and  \ref{lem:path2shell} imply that $G/I$ is an $\alpha$-radial with sublinear root $i$. 
Lemma~\ref{lem:shellneigh2strong} \ref{item:shellneigh2strong:round} further implies that every vertex from $\ocut{G/I}{i}{-\alpha}$ is strong. 
Hence, $G/I$ is round.  
The lemma is proved. 
\end{proof}

\subsection{From Sharp Semiradials to Round Radials}

Lemma~\ref{lem:srpath2lg} can easily be confirmed from Lemma~\ref{lem:const4l} \ref{item:const4l:sr}.

\begin{lemma} \label{lem:srpath2lg}
Let $\alpha\in\{+, -\}$, and let $G$ be a sharp $\alpha$-semiradial with root $r\in V(G)$. 
Let $H$ be the linear ground of $G$.  
Let $x\in V(G)\setminus V(H)$ and $\beta \in \{+, -\}$. 
Then,  $G$ has a $\beta$-ditrail from $x$ to $r$ 
if and only if 
$G$ has a $(\beta, -\alpha)$-ditrail from $x$ to a vertex in $V(H)$ whose edges are disjoint from $E(H)$. 
\end{lemma}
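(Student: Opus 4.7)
The plan is to invoke Lemma~\ref{lem:const4l} \ref{item:const4l:sr} directly; the task reduces to verifying its hypothesis
\[
\parNei{G}{V(G)\setminus V(H)} \subseteq \reachsets{H}{r}{\alpha} \setminus \reachsets{G}{r}{-\alpha}.
\]

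First, I would establish that $r \notin \parNei{G}{V(G)\setminus V(H)}$. If some $v \in V(G)\setminus V(H)$ were adjacent to $r$, then sharpness of $G$ would force $v$ to be linear in $G$, and Lemma~\ref{lem:rnei2alt} would then place $v$ inside the linear ground $H$, contradicting $v\notin V(H)$. Hence $\parNei{G}{V(G)\setminus V(H)} \subseteq V(H)\setminus\{r\}$.

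Next, I would verify the containment for such vertices. Each $y\in V(H)\setminus\{r\}$ is linear in $G$ by the definition of the linear ground, so $y\in \reachsets{G}{r}{\alpha} \setminus \reachsets{G}{r}{-\alpha}$; and since $H$ itself is an $\alpha$-semiradial with root $r$, we also have $y\in \reachsets{H}{r}{\alpha}$. Combining these yields the required inclusion, after which Lemma~\ref{lem:const4l} \ref{item:const4l:sr} delivers the stated equivalence.

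The only genuinely nontrivial point is the first step, ruling out $r$ as a neighbor of $V(G)\setminus V(H)$, as this is what allows one to upgrade linearity in $G$ to the containment $y \in \reachsets{H}{r}{\alpha}$; the remainder is unpacking definitions.
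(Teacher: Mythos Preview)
Your proof is correct and follows exactly the paper's approach, which merely cites Lemma~\ref{lem:const4l}~\ref{item:const4l:sr}; you have simply spelled out the hypothesis verification that the paper leaves implicit. One minor quibble with your closing remark: ruling out $r$ is needed because $r \in \reachsets{G}{r}{-\alpha}$ via the trivial ditrail (so the required exclusion $y \notin \reachsets{G}{r}{-\alpha}$ would fail at $y=r$), not because of any issue with the containment $y \in \reachsets{H}{r}{\alpha}$, which holds for every $y \in V(H)$ automatically since $H$ is an $\alpha$-semiradial.
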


Lemmas~\ref{lem:lg2neigh} and \ref{lem:srpath2lg} imply Lemma~\ref{lem:lg2asg}.

\begin{lemma} \label{lem:lg2asg}
Let $\alpha\in\{+, -\}$, and let $G$ be a sharp $\alpha$-semiradial with root $r\in V(G)$. 
Let $H$ be the linear ground of $G$. 
Then,  $G/H$ is a round $\alpha$-radial with root $h$, where $h$ is the contracted vertex that corresponds to $H$. 
\end{lemma}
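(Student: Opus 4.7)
The plan is to verify three properties of $G/H$: (i) $G/H$ is an $\alpha$-radial with root $h$; (ii) the root $h$ is sublinear in $G/H$; and (iii) every vertex of $\oNei{G/H}{h}{-\alpha}$ is strong in $G/H$. Properties (ii) and (iii) together encode roundness, and (i) supplies the underlying radial structure, so combined they yield the lemma. The whole argument proceeds by transporting ditrails back and forth across the contraction: Lemma~\ref{lem:srpath2lg} converts ditrails in $G$ to the right shape for descent to $G/H$, and Lemma~\ref{lem:lg2neigh} controls the behavior of the quotient on the boundary of $H$.

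For (i), given $x \in V(G)\setminus V(H)$, any $\beta$-ditrail in $G$ from $x$ to $r$ is, by Lemma~\ref{lem:srpath2lg}, upgradable to a $(\beta, -\alpha)$-ditrail from $x$ to some vertex of $V(H)$ whose edges are disjoint from $E(H)$. Contracting $H$ sends this ditrail to a $(\beta, -\alpha)$-ditrail from $x$ to $h$ in $G/H$. Since $G$ is an $\alpha$-semiradial, every such $x$ admits an $\alpha$-ditrail (and, when absolute, a $-\alpha$-ditrail) to $r$, so every vertex of $G/H$ reaches $h$ by an appropriate $-\alpha$-terminated ditrail. Thus $G/H$ is an $\alpha$-radial with root $h$.

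For (ii), the degenerate case $V(G) = V(H)$ collapses $G/H$ to a single vertex, so assume $V(G)\setminus V(H) \neq \emptyset$. Suppose, for contradiction, $h \in \reachset{G/H}{h}{-\alpha}{-\alpha}$; then a closed $(-\alpha, -\alpha)$-ditrail over $h$ in $G/H$ lifts to a trail in $G$ whose endpoints lie in $V(H)$, whose interior lies in $V(G)\setminus V(H)$, and whose end-signs at $V(H)$ are both $-\alpha$. This is a simple $(-\alpha, -\alpha)$-diear relative to $H$, contradicting Lemma~\ref{lem:lg2neigh}~\ref{item:noear}. For (iii), take $v \in \oNei{G/H}{h}{-\alpha}$; then $v$ is the $V(G)\setminus V(H)$-end of some $e\in \ocut{G}{H}{-\alpha}$, and Lemma~\ref{lem:lg2neigh}~\ref{item:scoop} identifies $v$ as absolute in $G$. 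Applying Lemma~\ref{lem:srpath2lg} to its $\alpha$- and $-\alpha$-ditrails to $r$ produces $(\alpha,-\alpha)$- and $(-\alpha,-\alpha)$-ditrails from $v$ to $V(H)$ disjoint from $E(H)$; these descend to ditrails of the same start/end signs from $v$ to $h$ in $G/H$, exhibiting $v$ as strong.

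The main subtlety I anticipate is the sign bookkeeping under contraction: it must be verified at each step that the ditrails handed down by Lemma~\ref{lem:srpath2lg} are truly edge-disjoint from $E(H)$ and retain the correct end-sign at the $V(H)$-endpoint, so that their projection to $G/H$ produces ditrails of precisely type $(\beta, -\alpha)$ ending at $h$, and conversely so that any closed ditrail at $h$ in $G/H$ lifts to a genuine simple diear relative to $H$ in $G$. Once this correspondence is formalized, every remaining step is a direct invocation of Lemmas~\ref{lem:lg2neigh} and~\ref{lem:srpath2lg}.
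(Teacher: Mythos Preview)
Your proof is correct and follows essentially the same approach as the paper's, which likewise derives the radial structure from Lemma~\ref{lem:srpath2lg}, the sublinearity of $h$ from Lemma~\ref{lem:lg2neigh}~\ref{item:noear}, and the strength of each vertex in $\oNei{G/H}{h}{-\alpha}$ from Lemma~\ref{lem:lg2neigh}~\ref{item:scoop}. The subtlety you flag is the only point needing care: a closed $(-\alpha,-\alpha)$-ditrail over $h$ may lift to a trail visiting $V(H)$ internally, but decomposing it at successive visits to $h$ and chaining the signs shows one segment must be a simple $(-\alpha,-\alpha)$-diear relative to $H$, giving the desired contradiction.
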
 
\begin{proof} %lem:lg2asg
It  easily follows from Lemma~\ref{lem:srpath2lg} that $G/H$ is an $\alpha$-radial with root $h$. 
Under Lemma~\ref{lem:srpath2lg}, Lemma~\ref{lem:lg2neigh} \ref{item:noear} further implies that $h$ is sublinear in $G/ H$. 
Additionally, Lemma~\ref{lem:lg2neigh} \ref{item:scoop} implies that every vertex from $\oNei{G/H}{h}{-\alpha}$ is strong in $G/H$. 
Thus, it follows that $G/H$ is round. 
\end{proof}

\subsection{From Round Radials to Sharp Semiradials}

Lemmas~\ref{lem:asg2neigh}, \ref{lem:const4s}, and \ref{lem:rrdel} imply Lemma~\ref{lem:rpath2asg}.

\begin{lemma} \label{lem:rpath2asg} 
Let $\alpha\in\{+, -\}$, and let $G$ be a round $\alpha$-radial with root $r\in V(G)$. 
Let $H$ be the almost strong ground of $G$. 
Let $x\in V(G)\setminus V(H)$ and $\beta \in \{+, -\}$. 
Then, the following statements are equivalent: 
\begin{rmenum}
\item \label{item:rpath2asg:original} 
$G$ has a $(\beta, -\alpha)$-ditrail from $x$ to $r$. 
\item \label{item:rpath2asg:cut}
$G - E_G[r, V(G)\setminus V(H)]$ has a $(\beta, -\alpha)$-ditrail from $x$ to $r$. 
\item \label{item:rpath2asg:ground}
$G - E_G[r, V(G)\setminus V(H)]$ has a $\beta$-ditrail from $x$ to a vertex in $V(H)\setminus \{r\}$ whose edges are disjoint from $E(H)$. 
\end{rmenum}  
\end{lemma}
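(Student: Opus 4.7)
The plan is to establish the cycle of equivalences by first showing \ref{item:rpath2asg:original} $\Leftrightarrow$ \ref{item:rpath2asg:cut} via the edge-deletion lemma and then \ref{item:rpath2asg:cut} $\Leftrightarrow$ \ref{item:rpath2asg:ground} via the contraction-type lemma. Throughout, set $F := E_G[r, V(G) \setminus V(H)]$ and $G' := G - F$, so that $G'$ is precisely the graph appearing in \ref{item:rpath2asg:cut} and \ref{item:rpath2asg:ground}.

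For \ref{item:rpath2asg:original} $\Leftrightarrow$ \ref{item:rpath2asg:cut}, I would first apply Lemma~\ref{lem:asg2neigh} to conclude $\oNei{G}{r}{-\alpha} \subseteq V(H)$, which uses that $G$ is a round $\alpha$-radial. Consequently, no vertex in $V(G) \setminus V(H)$ is joined to $r$ by an edge whose end at $r$ has sign $-\alpha$, so every edge of $F$ must have $r$ as its $\alpha$-signed end, giving $F \subseteq \ocut{G}{r}{\alpha}$. Lemma~\ref{lem:rrdel} applied to this $F$ then yields $\reachset{G}{r}{\beta}{-\alpha} = \reachset{G'}{r}{\beta}{-\alpha}$, which is the equivalence sought.

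For \ref{item:rpath2asg:cut} $\Leftrightarrow$ \ref{item:rpath2asg:ground}, I plan to invoke Lemma~\ref{lem:const4s}\ref{item:const4s:r} with $G'$ and $H$ in place of $G$ and $H$. Its hypothesis demands that every vertex of $\parNei{G'}{V(G') \setminus V(H)}$ lie in $\reachset{H}{r}{\alpha}{-\alpha} \cap \reachset{H}{r}{-\alpha}{-\alpha}$. Since $H$ is the almost strong ground, every $v \in V(H) \setminus \{r\}$ is strong in $H$ and therefore satisfies this reachability condition; and $r$ itself has been detached from $V(G') \setminus V(H)$ by the removal of $F$, so $r \notin \parNei{G'}{V(G') \setminus V(H)}$ and cannot cause trouble. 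The lemma then converts \ref{item:rpath2asg:cut} into the existence, in $G'$, of a $\beta$-ditrail from $x$ to some vertex of $V(H)$ whose edges avoid $E(H)$.

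The only remaining gap is that Lemma~\ref{lem:const4s}\ref{item:const4s:r} permits the end vertex to be $r$, whereas \ref{item:rpath2asg:ground} restricts it to $V(H)\setminus\{r\}$. To close this, I would take any such ditrail, trace it from $x$, and truncate at the first vertex $y$ it meets in $V(H)$; the prefix is still a $\beta$-ditrail with edges disjoint from $E(H)$, and $y \neq r$ because in $G'$ the root $r$ has no neighbor lying in $V(G') \setminus V(H)$. I expect this minor end-vertex adjustment to be the only delicate point; the rest is a clean assembly of the three cited lemmas, with no serious obstacle anticipated.
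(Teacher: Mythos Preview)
Your proposal is correct and follows essentially the same route as the paper: use Lemma~\ref{lem:asg2neigh} to get $F \subseteq \ocut{G}{r}{\alpha}$, then Lemma~\ref{lem:rrdel} for \ref{item:rpath2asg:original}\,$\Leftrightarrow$\,\ref{item:rpath2asg:cut}, and Lemma~\ref{lem:const4s}\ref{item:const4s:r} (applied to $G'$) for the remaining equivalence. You are in fact more careful than the paper in two places: you verify explicitly that the neighbor hypothesis of Lemma~\ref{lem:const4s}\ref{item:const4s:r} holds for $G'$ (by noting $r\notin \parNei{G'}{V(G')\setminus V(H)}$ after deleting $F$), and you close the small gap between ``a vertex in $V(H)$'' and ``a vertex in $V(H)\setminus\{r\}$'' by the first-hit truncation argument, which the paper leaves implicit.
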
 
\begin{proof} 
From Lemma~\ref{lem:asg2neigh},  we have $E_G[r, V(G)\setminus V(H)] \subseteq \ocut{G}{r}{\alpha}$. 
Hence, Lemma~\ref{lem:rrdel} implies that  \ref{item:rpath2asg:original} and \ref{item:rpath2asg:cut} are equivalent. 
Lemma~\ref{lem:const4s} \ref{item:const4s:r} implies  that  \ref{item:rpath2asg:original} and \ref{item:rpath2asg:ground} are equivalent.  
Hence, the lemma is proved. 
\end{proof}

Lemmas~\ref{lem:astgneigh2sublinear} and \ref{lem:rpath2asg} imply Lemma~\ref{lem:asg2lg}.

\begin{lemma} \label{lem:asg2lg}
Let $\alpha\in\{+, -\}$, and let $G$ be a round $\alpha$-radial with root $r\in V(G)$. 
Let $H$ be the almost strong ground of $G$. 
Let $G' :=  G - E_G[r, V(G)\setminus V(H)] /H$.    
Then, $G'$ is a sharp $\alpha$-semiradial with root $h$, where $h$ is the contracted vertex that corresponds to $H$. 
\end{lemma}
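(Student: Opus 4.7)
The plan is to mimic the deduction of Lemma~\ref{lem:stg2lg} by combining Lemmas~\ref{lem:astgneigh2sublinear} and \ref{lem:rpath2asg}. Note that the hypothesis that $G$ is a round $\alpha$-radial forces the root $r$ to be sublinear, so Lemma~\ref{lem:astgneigh2sublinear}~\ref{item:sublinr} is available. The only subtle point is that ditrails in $G$ ending at $r$ must correspond correctly, under the deletion of $E_G[r, V(G)\setminus V(H)]$ followed by the contraction of $H$, to ditrails in $G'$ ending at $h$; however, this bookkeeping is exactly what the equivalence \ref{item:rpath2asg:original}$\Leftrightarrow$\ref{item:rpath2asg:ground} of Lemma~\ref{lem:rpath2asg} encapsulates.

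First I would verify that $G'$ is an $\alpha$-semiradial with root $h$. All vertices of $V(H)$ are identified with $h$ under the contraction, so it suffices to treat each $x\in V(G)\setminus V(H)$. Since $G$ is an $\alpha$-radial with root $r$, there is an $(\alpha,-\alpha)$-ditrail in $G$ from $x$ to $r$. Applying Lemma~\ref{lem:rpath2asg} with $\beta=\alpha$ yields an $\alpha$-ditrail in $G - E_G[r, V(G)\setminus V(H)]$ from $x$ to some vertex of $V(H)\setminus\{r\}$ whose edges avoid $E(H)$; after contracting $H$, this descends to an $\alpha$-ditrail from $x$ to $h$ in $G'$, as required.

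Next I would establish sharpness by showing that every neighbor of $h$ in $G'$ is linear with respect to $h$. Let $v\in V(G)\setminus V(H)$ be such a neighbor. By construction of $G'$, the vertex $v$ must be adjacent in $G - E_G[r, V(G)\setminus V(H)]$ to some vertex of $V(H)$; since every edge between $r$ and $V(G)\setminus V(H)$ has been deleted, $v$ is in fact adjacent in $G$ to some vertex of $V(H)\setminus\{r\}$. Lemma~\ref{lem:astgneigh2sublinear}~\ref{item:sublinr} then forces $v$ to be sublinear in $G$, i.e.\ $v\in\reachset{G}{r}{\alpha}{-\alpha}\setminus \reachset{G}{r}{-\alpha}{-\alpha}$. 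Invoking \ref{item:rpath2asg:original}$\Leftrightarrow$\ref{item:rpath2asg:ground} of Lemma~\ref{lem:rpath2asg} once more with each $\beta\in\{+,-\}$ translates this membership into $v\in\reachsets{G'}{h}{\alpha}\setminus\reachsets{G'}{h}{-\alpha}$, which is precisely linearity of $v$ in $G'$ with respect to $h$. Hence every neighbor of $h$ is linear and $G'$ is sharp, completing the proof.
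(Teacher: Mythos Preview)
Your proof is correct and follows the same route as the paper: invoke Lemma~\ref{lem:rpath2asg} to show that $G'$ is an $\alpha$-semiradial with root $h$, and then combine Lemma~\ref{lem:astgneigh2sublinear}~\ref{item:sublinr} with Lemma~\ref{lem:rpath2asg} to conclude that every neighbor of $h$ is linear. The paper's own proof is a two-line sketch of exactly this argument; you have simply spelled out the bookkeeping (in particular, why a neighbor of $h$ in $G'$ must come from $\parNei{G}{V(H)\setminus\{r\}}$ after the deletion of $E_G[r, V(G)\setminus V(H)]$) that the paper leaves implicit.
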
 
\begin{proof} 
Lemma~\ref{lem:rpath2asg} easily implies that $G/H$ is an $\alpha$-semiradial with root $h$.  
Under Lemma~\ref{lem:rpath2asg}, 
 Lemma~\ref{lem:astgneigh2sublinear} further implies that every neighbor of $h$ is linear in $G/H$. 
 Thus,  $G/H$ is sharp. 
\end{proof}

\section{Gluing Lemmas} \label{sec:constlem} 
\subsection{Gluing Sharp Semiradials onto Sets of Absolute and Strong Vertices} \label{sec:constlem:abst} 

In Section~\ref{sec:constlem:abst}, 
we provide Lemmas~\ref{lem:construct2r} and \ref{lem:construct2sr} to be used in Section~\ref{sec:construct}.

\begin{lemma} \label{lem:construct2r} 
Let $\alpha \in \{+, -\}$. 
Let $G$ and $H$ be disjoint bidirected graphs 
such that $G$ is a sharp $\alpha$-semiradial with root $s$, and  $H$ is an $\alpha$-radial with root $r$. 
Let $S\subseteq V(H)$ be a set of strong vertices in $H$. 
Let $\hat{G} := (G; s) \oplus (H; S)$. 
Then,  for each $\beta\in \{+, -\}$, 
\begin{rmenum}
\item  $\reachsets{G}{s}{\beta} \setminus \{s\}$ is equal to $\reachset{\hat{G}}{r}{\beta}{-\alpha} \cap ( V(G)\setminus \{s\} )$;  and, 
\item $\reachset{H}{r}{\beta}{-\alpha}$ is equal to $\reachset{\hat{G}}{r}{\beta}{-\alpha} \cap V(H)$. 
\end{rmenum} 
\end{lemma}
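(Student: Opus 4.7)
The plan is to prove the two parts essentially independently, each by translating ditrails among $G$, $H$, and $\hat{G}$. The easy direction of (ii), namely $\reachset{H}{r}{\beta}{-\alpha}\subseteq\reachset{\hat{G}}{r}{\beta}{-\alpha}\cap V(H)$, is immediate because $H$ is a subgraph of $\hat{G}$ and every ditrail in $H$ remains one in $\hat{G}$. The interesting direction uses two key inputs: every $t\in S$ is strong in $H$, so it has $(\gamma,-\alpha)$-ditrails to $r$ in $H$ for both $\gamma\in\{+,-\}$; and every neighbor of $s$ in $G$ is $\alpha$-linear by sharpness.

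For (i) $\subseteq$, I would take $x\in\reachsets{G}{s}{\beta}\setminus\{s\}$ and a $\beta$-ditrail in $G$ from $x$ to $s$, truncate it to the prefix $P'$ ending at the first visit to $s$, so that its last edge $e$ lies in $\parcut{G}{s}$ and is non-loop. Translating $P'$ to $\hat{G}$ by replacing $e$ with $f(e)$ yields a $\beta$-ditrail from $x$ to some $t\in S$ whose end sign equals the sign $\gamma$ of $s$ over $e$; since $t$ is strong in $H$, one picks a $(-\gamma,-\alpha)$-ditrail $R$ in $H$ from $t$ to $r$ and concatenates it to obtain the required $(\beta,-\alpha)$-ditrail in $\hat{G}$. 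For (i) $\supseteq$, starting from a $(\beta,-\alpha)$-ditrail $P$ in $\hat{G}$ from $x\in V(G)\setminus\{s\}$ to $r$, I trace $P$ up to the first vertex $y\in V(H)$; the last edge of $xPy$ must be $f(e)$ for a non-loop $e\in\parcut{G}{s}$, and replacing $f(e)$ by $e$ and $y$ by $s$ recovers a $\beta$-ditrail in $G$ from $x$ to $s$, since only the boundary is altered.

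For (ii) $\supseteq$, the core work, I first show $V(P)\subseteq V(H)$ for any $(\beta,-\alpha)$-ditrail $P$ in $\hat{G}$ from $x\in V(H)$ to $r$. If $P$ entered $V(G)\setminus\{s\}$ at some $u_1\in\parNei{G}{s}$ and later exited at some $u_2\in\parNei{G}{s}$, sharpness of $G$ forces $u_1,u_2$ to be $\alpha$-linear in $G$, and hence the sign of each $u_i$ over its crossing edge must be $\alpha$ (otherwise that single edge would already give $u_i$ a $-\alpha$-ditrail to $s$). Propagating signs through $P$, the detour from $u_1$ to $u_2$ becomes a $(-\alpha,-\alpha)$-ditrail in $G$ between two $\alpha$-linear vertices, contradicting Lemma~\ref{lem:nobypass}. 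So the edges of $P$ lie in $E(H)$ together with images under $f$ of loops at $s$. If no such loop image appears, $P$ is already a ditrail in $H$. Otherwise, let $e^*=f(\ell)$ be the first loop image used by $P$, joining $v,v'\in S$ with signs $(\alpha_1,\alpha_2)$, and decompose $P=xP_1v+e^*+v'P_2r$; by choice of $e^*$, $P_1\subseteq E(H)$ and ends at $v$ with sign $-\alpha_1$. Using that $v$ is strong in $H$, pick an $(\alpha_1,-\alpha)$-ditrail $R$ in $H$ from $v$ to $r$. The concatenation $P_1+R$ is consistent at $v$ (since $-\alpha_1$ and $\alpha_1$ are opposite) and gives a $(\beta,-\alpha)$-walk in $H$ from $x$ to $r$, from which the desired ditrail is extracted by the standard shortcut argument the paper invokes elsewhere under the phrase ``concatenation of ditrails''.

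The principal obstacle is part (ii) $\supseteq$, and specifically the handling of loop images, which are edges of $\hat{G}$ that do not lie in $E(H)$ yet have both endpoints in $V(H)$; they cannot be dismissed by a simple ``delete the detour'' reasoning. The argument must therefore combine Lemma~\ref{lem:nobypass} (to rule out detours into $V(G)\setminus\{s\}$, where sharpness of $G$ is essential) with the strong property of $S$ (to bypass each loop image through a ditrail internal to $H$). Once these two ingredients are in place, the rest is routine sign bookkeeping.
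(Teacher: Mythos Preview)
Your proof is correct, but it diverges from the paper's in the handling of part~(ii), $\supseteq$. The paper dispatches this in two sentences: since $G$ is sharp, Theorem~\ref{thm:asr} implies $P$ cannot contain a simple diear relative to $H$ as a subditrail, hence $P$ lies entirely in $H$. The mechanism is that any such diear, pulled back to $G$, would be a diear relative to $\{s\}$, and adding it to $\{s\}$ produces a nontrivial absolute $\alpha$-semiradial with root $s$ by Theorem~\ref{thm:asr}, contradicting that the absolute ground of $G$ is trivial. Your route via Lemma~\ref{lem:nobypass} is a legitimate alternative: you exploit that the entry and exit points of a detour are $\alpha$-linear neighbors of $s$ and then invoke the nonexistence of $(-\alpha,-\alpha)$-ditrails between $\alpha$-linear vertices. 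Both arguments hinge on sharpness, but the paper's packaging through Theorem~\ref{thm:asr} is shorter and handles all cases uniformly.

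Your concern about loop images is unnecessary. If $G$ is sharp, then $G$ has no loops over $s$: any loop over $s$ would make the subgraph on $\{s\}$ with that loop an absolute $\alpha$-semiradial, so the absolute ground of $G$ would not be trivial. Hence the set of loop images in $\hat G$ is empty, and the elaborate bypass you construct using the strong property of $v\in S$ is vacuous. This also means that every edge of $\hat G$ with both ends in $V(H)$ already lies in $E(H)$, so once detours into $V(G)\setminus\{s\}$ are ruled out the conclusion $E(P)\subseteq E(H)$ is immediate.
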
 
\begin{proof} %construct 
It can easily be confirmed that $\reachsets{G}{s}{\beta} \setminus \{s\}$ is contained in $\reachset{\hat{G}}{r}{\beta}{-\alpha} \cap ( V(G)\setminus \{s\} )$ 
by considering the concatenation of ditrails. 
It is also clear that 
$\reachset{H}{r}{\beta}{-\alpha}$ is a subset of $\reachset{\hat{G}}{r}{\beta}{-\alpha} \cap V(H)$. 
Hence, in the following, 
we prove 
$\reachsets{G}{s}{\beta}\setminus \{s\} \supseteq \reachset{\hat{G}}{r}{\beta}{-\alpha} \cap (V(G)\setminus \{s\})$ 
and $\reachset{H}{r}{\beta}{-\alpha} \supseteq \reachset{\hat{G}}{r}{\beta}{-\alpha} \cap V(H)$. 
Let $x \in \reachset{\hat{G}}{r}{\beta}{-\alpha}$, 
and let $P$ be a $(\beta, -\alpha)$-ditrail in $\hat{G}$ from $x$ to $r$.  

First, consider the case  $x\in V(G)\setminus \{s\}$. 
Trace $P$ from $x$, and let $y$ be the first encountered vertex that is in $V(H)$. 
Then, $xPy$ corresponds to a $\beta$-ditrail from $x$ to $s$ in $G$. 
This proves $x\in \reachsets{G}{s}{\beta}$.  

Next, consider the case $x\in V(H)$. 
Because $G$ is sharp, Theorem~\ref{thm:asr} implies that $P$ cannot contain a simple diear relative to $H$ as its subditrail. 
This implies that $P$ is a ditrail of $H$ for this case. 
Therefore, $x\in \reachset{H}{r}{\beta}{-\alpha}$ holds. 
This completes the proof. 
\end{proof}

Lemma~\ref{lem:construct2sr} is  an analogue of Lemma~\ref{lem:construct2r} 
and can easily be confirmed by a similar discussion.

\begin{lemma} \label{lem:construct2sr} 
Let $\alpha \in \{+, -\}$. 
Let $G$ and $H$ be disjoint bidirected graphs 
such that $G$ is a sharp $\alpha$-semiradial with root $s$, and  $H$ is an $\alpha$-semiradial with root $r$. 
Let $S\subseteq V(H)$ be a set of absolute vertices in $H$. 
Let $\hat{G} := (G; s) \oplus (H; S)$. 
Then,  for each $\beta\in \{+, -\}$, 
\begin{rmenum}
\item  $\reachsets{G}{s}{\beta} \setminus \{s\}$ is equal to $\reachsets{\hat{G}}{r}{\beta} \cap ( V(G)\setminus \{s\} )$;  and, 
\item  $\reachsets{H}{r}{\beta}$ is equal to $\reachsets{\hat{G}}{r}{\beta}\cap V(H)$. 
\end{rmenum} 
\end{lemma}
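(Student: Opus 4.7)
The plan is to mirror the proof of Lemma~\ref{lem:construct2r} almost verbatim, making two adjustments reflecting the semiradial setting: (a) the target set on the $H$-side is now $\reachsets{H}{r}{\beta}$ (unrestricted $\beta$-ditrails to $r$) rather than $\reachset{H}{r}{\beta}{-\alpha}$, and (b) since $S$ consists of absolute vertices of $H$, each $t \in S$ satisfies $t \in \reachsets{H}{r}{\alpha} \cap \reachsets{H}{r}{-\alpha}$, and this property plays the role that strongness played in the original proof.

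The ``$\subseteq$'' inclusions are settled by concatenation of ditrails. Given a $\beta$-ditrail $P$ in $G$ from $x \in V(G)\setminus\{s\}$ to $s$, the gluing correspondence translates $P$ into a $\beta$-ditrail in $\hat{G}$ from $x$ to some $t \in S$. Because $t$ is absolute in $H$, ditrails in $H$ from $t$ to $r$ of either starting sign exist, so one can choose one whose sign at $t$ is opposite to the sign of $P$ at $t$ and legally concatenate, yielding a $\beta$-ditrail from $x$ to $r$ in $\hat{G}$. Likewise, any $\beta$-ditrail in $H$ from $x \in V(H)$ to $r$ is already a ditrail in $\hat{G}$, giving the second easy inclusion.

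For the reverse inclusions, fix $x \in \reachsets{\hat{G}}{r}{\beta}$ and a witnessing $\beta$-ditrail $P$ from $x$ to $r$. If $x \in V(G)\setminus\{s\}$, I trace $P$ from $x$ and let $y$ be the first vertex on $P$ that lies in $V(H)$; by the construction of the gluing, $y \in S$, and the prefix $xPy$ corresponds to a $\beta$-ditrail in $G$ from $x$ to $s$, whence $x \in \reachsets{G}{s}{\beta}$. If instead $x \in V(H)$, I must show that $P$ never leaves $V(H)$: any maximal excursion of $P$ into $V(G)\setminus\{s\}$ would, under the gluing correspondence, yield a simple diear in $G$ relative to $\{s\}$, and Theorem~\ref{thm:asr} together with the sharpness of $G$ (which makes $\{s\}$ the absolute ground of $G$) forbids such a diear. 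Hence $P$ is entirely a ditrail of $H$, and $x \in \reachsets{H}{r}{\beta}$.

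The only genuinely nontrivial step is the diear-exclusion argument in the case $x \in V(H)$, but this is a direct mirror of the corresponding step in Lemma~\ref{lem:construct2r}: the argument depends only on the sharpness of $G$ and not on whether the glued object $H$ is a radial or a semiradial, so it transfers without modification. The remainder is routine bookkeeping against the definition of gluing sum.
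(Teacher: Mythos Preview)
Your proposal is correct and takes essentially the same approach as the paper: the paper simply states that Lemma~\ref{lem:construct2sr} ``can easily be confirmed by a similar discussion'' to Lemma~\ref{lem:construct2r}, and what you have written is precisely that similar discussion, with the two adjustments you identify (absoluteness replacing strongness, unrestricted $\beta$-ditrails replacing $(\beta,-\alpha)$-ditrails). Your diear-exclusion argument via sharpness and Theorem~\ref{thm:asr} is exactly the mechanism the paper invokes in the proof of Lemma~\ref{lem:construct2r}, transported verbatim.
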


\subsection{Gluing Round Radials onto Sets of Linear and Sublinear Vertices}  \label{sec:constlem:linsublin} 

In Section~\ref{sec:constlem:linsublin}, 
we introduce more lemmas to be used in Section~\ref{sec:construct}.  
First, we provide Lemma~\ref{lem:trimmedr2const} to be used for proving Lemma~\ref{lem:construct2lin}.

\begin{lemma}  \label{lem:trimmedr2const} 
Let $\alpha \in \{+, -\}$.  
Let $G$ and $H$ be disjoint bidirected graphs 
such that $G$ is an $\alpha$-radial with sublinear root $s$ for which $\ocut{G}{s}{\alpha} = \emptyset$, 
and $H$ is an $\alpha$-semiradial with root $r$. 
Let $S \subseteq V(H)\setminus \reachsets{H}{r}{-\alpha}$. 
Let $\hat{G} := (G; s) \oplus (H; S)$. 
Then,  for each $\beta\in \{+, -\}$, 
\begin{rmenum}
\item  $\reachset{G}{s}{\beta}{-\alpha} \setminus \{s\}$ is equal to $\reachsets{\hat{G}}{r}{\beta} \cap ( V(G)\setminus \{s\} )$; and, 
\item  $\reachsets{H}{r}{\beta}$ is equal to $\reachsets{\hat{G}}{r}{\beta} \cap V(H)$. 
\end{rmenum} 
\end{lemma}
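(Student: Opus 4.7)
The plan is to follow the template of Lemma~\ref{lem:construct2r}, proving both claimed equalities by establishing each inclusion separately. The forward inclusions $\subseteq$ of (i) and (ii) will follow by concatenation of ditrails, while the reverse inclusions $\supseteq$ require a structural decomposition of an arbitrary $\beta$-ditrail in $\hat{G}$ into alternating $H$-segments and $G$-segments at the interface $S$, together with an appeal to the sublinearity of $s$ to forbid internal $G$-segments.

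For the forward direction of (i), given a $(\beta, -\alpha)$-ditrail $Q$ from $x \in V(G) \setminus \{s\}$ to $s$ in $G$, I would interpret it in $\hat{G}$ as a $(\beta, -\alpha)$-ditrail $Q'$ from $x$ to some $t \in S$, where the last edge of $Q$ (incident to $s$ with sign $-\alpha$) corresponds under the gluing map to an edge incident to $t$ with sign $-\alpha$ at $t$. Since $H$ is an $\alpha$-semiradial with root $r$, the vertex $t$ admits an $\alpha$-ditrail $R$ to $r$ in $H$. The concatenation $Q' + R$ is a valid $\beta$-ditrail in $\hat{G}$ from $x$ to $r$: the signs $-\alpha$ and $\alpha$ at $t$ are opposite, and the edge sets of $G$- and $H$-origin are disjoint in $\hat{G}$ by the definition of the gluing sum. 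The forward direction of (ii) is immediate because $H$ is an induced subgraph of $\hat{G}$.

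For the reverse directions, let $P$ be a $\beta$-ditrail in $\hat{G}$ from $x$ to $r$. I partition its edge sequence into maximal \emph{$H$-segments} (subditrails with edges in $E(H)$) and maximal \emph{$G$-segments} (subditrails whose edges lie in $E(G) \setminus F$ or are images under the gluing map $f$ of edges in $F$, with $F$ as in the definition of the gluing sum). All transitions between segments occur at vertices of $S$. Because $\ocut{G}{s}{\alpha} = \emptyset$, every glued edge carries sign $-\alpha$ at its endpoint in $S$, and the ditrail condition then forces the adjacent $H$-edge at each transition vertex $t \in S$ to carry sign $\alpha$ at $t$. Any $G$-segment flanked by $H$-segments on both sides therefore pulls back to a $(-\alpha, -\alpha)$-ditrail in $G$ from $s$ to $s$, contradicting $s \notin \reachset{G}{s}{-\alpha}{-\alpha}$, which holds by the sublinearity of the root. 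Hence $P$ contains at most one $G$-segment: if $x \in V(G) \setminus \{s\}$ then $P$ is an initial $G$-segment followed by a final $H$-segment, and the $G$-segment pulls back to a $(\beta, -\alpha)$-ditrail from $x$ to $s$ in $G$; if $x \in V(H)$ then $P$ has no $G$-segment and lies entirely in $H$.

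The main obstacle is the rigorous sign bookkeeping at transition vertices in $S$: one must confirm that in any internal $G$-segment the first and last edges both carry sign $-\alpha$ at their $S$-endpoints, so that the pulled-back walk in $G$ is genuinely a $(-\alpha, -\alpha)$-ditrail from $s$ to itself rather than merely a closed trail. Once this has been secured, the contradiction with sublinearity of $s$ is clean, and the resulting structural dichotomy delivers both reverse inclusions at once.
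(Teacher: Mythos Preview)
Your proposal is correct and follows essentially the same approach as the paper: both argue that any $\beta$-ditrail in $\hat{G}$ from $x$ to $r$ can contain at most one excursion into the $G$-part (the paper phrases this as ``$P$ cannot contain a simple diear relative to $H$,'' you phrase it as ruling out internal $G$-segments via sublinearity of $s$), and then read off the two inclusions. The only cosmetic difference is that you deduce the $-\alpha$ sign at the transition vertex directly from $\ocut{G}{s}{\alpha} = \emptyset$, whereas the paper deduces it from $y \in S \subseteq V(H)\setminus \reachsets{H}{r}{-\alpha}$ forcing $yPr$ to be an $\alpha$-ditrail.
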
 
\begin{proof} 
By considering the concatenation of ditrails,  it can easily be confirmed 
that $\reachset{G}{s}{\beta}{-\alpha} \setminus \{s\}$ and $\reachsets{H}{r}{\beta}$ 
are subsets of $\reachsets{\hat{G}}{r}{\beta} \cap ( V(G)\setminus \{s\} )$ and $\reachsets{\hat{G}}{r}{\beta} \cap V(H)$, respectively. 
In the following, 
we prove that 
$\reachset{G}{s}{\beta}{-\alpha} \setminus \{s\}$ and $\reachsets{H}{r}{\beta}$ contain 
$\reachsets{\hat{G}}{r}{\beta} \cap ( V(G)\setminus \{s\} )$ and $\reachsets{\hat{G}}{r}{\beta} \cap V(H)$, respectively.  

Let $x\in V(\hat{G})$, and let $P$ be a $\beta$-ditrail in $\hat{G}$ from $x$ to $r$. 
The assumption on $G$ implies that $P$ cannot contain a simple diear relative to $H$ as a subditrail.  
This further implies that $P$ shares at most one edge from $\parcut{\hat{G}}{H}$. 
Therefore, if $x$ is a vertex from $V(G) \setminus \{s\}$, then 
there is a vertex $y\in V(P)$ such that 
$E(xPy) \subseteq E(G)$ and $E(yPr) \subseteq E(H)$ hold. 
Because $y \in S$ holds, $yPr$ is an $\alpha$-ditrail. 
Hence, $xPy$ is an $(\beta, -\alpha)$-ditrail. Thus, $x \in \reachset{G}{s}{\beta}{-\alpha} \setminus \{s\}$ is obtained. 
By contrast, if $x$ is a vertex from $V(H)$, then $P$ is a ditrail of $H$, 
and $x$ is accordingly a vertex from $\reachsets{H}{r}{\beta}$. 
Thus, the lemma is proved. 
\end{proof}

Lemma~\ref{lem:construct2lin} is derived from Lemmas~\ref{lem:edgeaddsr}, \ref{lem:sr2delete}, and \ref{lem:trimmedr2const}.

\begin{lemma} \label{lem:construct2lin} 
Let $\alpha \in \{+, -\}$.  
Let $G$ and $H$ be disjoint bidirected graphs 
such that $G$ is an $\alpha$-radial with sublinear root $s$, and $H$ is an $\alpha$-semiradial with root $r$. 
Let $S \subseteq V(H)\setminus \reachsets{H}{r}{-\alpha}$. 
Let $\hat{G} := (G; s) \oplus (H; S)$. 
Then,  for each $\beta\in \{+, -\}$, 
\begin{rmenum}
\item  $\reachset{G}{s}{\beta}{-\alpha} \setminus \{s\}$ is equal to $\reachsets{\hat{G}}{r}{\beta} \cap ( V(G)\setminus \{s\} )$; and, 
\item $\reachsets{H}{r}{\beta}$ is equal to $\reachsets{\hat{G}}{r}{\beta} \cap V(H)$. 
\end{rmenum} 
\end{lemma}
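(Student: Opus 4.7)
The plan is to reduce to the trimmed case already handled by Lemma~\ref{lem:trimmedr2const} by first excising, then re-inserting, the edges at $s$ in $G$ on which $s$ bears sign $\alpha$.

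Set $F := \ocut{G}{s}{\alpha}$ and put $G_0 := G - F$. Since $G$ is an $\alpha$-radial with sublinear root $s$, Lemma~\ref{lem:rrdel} gives
\begin{equation*}
\reachset{G_0}{s}{\beta}{-\alpha} = \reachset{G}{s}{\beta}{-\alpha} \quad \text{for each } \beta\in\{+,-\},
\end{equation*}
so $G_0$ is again an $\alpha$-radial with sublinear root $s$, now satisfying $\ocut{G_0}{s}{\alpha} = \emptyset$.

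Form $\hat{G}_0 := (G_0; s) \oplus (H; S)$ using the restriction of the gluing map that produces $\hat{G}$, so that $\hat{G}$ differs from $\hat{G}_0$ only by the set $F'$ of edges corresponding to $F$ under the gluing. Each edge of $F'$ joins some vertex of $S$ with a vertex of $V(G)\setminus\{s\}$, and the $S$-end carries sign $\alpha$ inherited from $s$. Applying Lemma~\ref{lem:trimmedr2const} to $\hat{G}_0$ yields
\begin{equation*}
\reachset{G_0}{s}{\beta}{-\alpha}\setminus\{s\} = \reachsets{\hat{G}_0}{r}{\beta} \cap (V(G)\setminus\{s\}), \qquad \reachsets{H}{r}{\beta} = \reachsets{\hat{G}_0}{r}{\beta}\cap V(H).
\end{equation*}
Specializing the second identity to $\beta=-\alpha$ and combining it with the hypothesis $S\subseteq V(H)\setminus \reachsets{H}{r}{-\alpha}$ shows that no vertex of $S$ lies in $\reachsets{\hat{G}_0}{r}{-\alpha}$; this is exactly the boundary-free condition required by Lemma~\ref{lem:edgeaddsr} for $\hat{G}_0$ with induced semiradial $H$, and the edges of $F'$ have the correct endpoints and sign to be added under that lemma. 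Hence Lemma~\ref{lem:edgeaddsr} gives $\reachsets{\hat{G}}{r}{\beta}=\reachsets{\hat{G}_0}{r}{\beta}$ for each $\beta$.

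Chaining the three equalities then yields both conclusions of the lemma. The only delicate point is bookkeeping: one must unwind the gluing-sum definition to verify that the edges of $F'$ meet the hypotheses of Lemma~\ref{lem:edgeaddsr}, namely that their $V(H)$-endpoints lie in $V(H)\setminus \reachsets{\hat{G}_0}{r}{-\alpha}$ and carry sign $\alpha$. This is immediate from $F = \ocut{G}{s}{\alpha}$ together with the identity just verified.
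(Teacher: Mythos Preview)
Your proof is correct and follows essentially the same route as the paper: delete $\ocut{G}{s}{\alpha}$, invoke Lemma~\ref{lem:trimmedr2const} on the trimmed gluing sum, then restore the deleted edges via Lemma~\ref{lem:edgeaddsr}. Your verification of the hypotheses for Lemma~\ref{lem:edgeaddsr} is in fact more explicit than the paper's, and your citation of Lemma~\ref{lem:rrdel} for the first step is arguably cleaner than the paper's reference to Lemma~\ref{lem:sr2delete} (which appears to be a typo for the radial version).
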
 
\begin{proof} 
Let $G' := G - \ocut{G}{s}{\alpha}$. 
Let $\hat{G}' := (G'; s) \oplus (H; S)$. 
Note $\hat{G}' = \hat{G} - F$, where $F \subseteq E(\hat{G})$ is the set of edges that corresponds to $\ocut{G}{s}{\alpha}$. 
Lemma~\ref{lem:sr2delete} implies  
$\reachset{G}{r}{\beta}{-\alpha} = \reachset{G'}{r}{\beta}{-\alpha}$ for each $\beta\in\{+, -\}$. 
Therefore, Lemma~\ref{lem:trimmedr2const} can be applied to $G'$, by which we obtain that 
$\reachset{G'}{s}{\beta}{-\alpha} \setminus \{s\}$ is equal to $\reachsets{\hat{G}'}{r}{\beta} \cap ( V(G)\setminus \{s\} )$, and  
$\reachsets{H}{r}{\beta}$ is equal to $\reachsets{\hat{G}'}{r}{\beta} \cap V(H)$. 
This also implies that Lemma~\ref{lem:edgeaddsr} can be applied to $\hat{G}'$ 
for adding $F$ to $\hat{G}'$, 
by which we obtain 
$\reachsets{\hat{G}}{r}{\beta} = \reachsets{G'}{r}{\beta}$ for each $\beta\in\{+, -\}$. 
Thus, the lemma is proved. 
\end{proof}

Lemma~\ref{lem:construct2sublin} is an analogue of Lemma~\ref{lem:construct2lin} and can easily confirmed by a similar discussion.

\begin{lemma} \label{lem:construct2sublin} 
Let $\alpha \in \{+, -\}$.  
Let $G$ and $H$ be disjoint bidirected graphs 
such that $G$ is an $\alpha$-radial with sublinear root $s$, and $H$ is an $\alpha$-radial with root $r$. 
Let $S \subseteq  V(H)\setminus \reachset{H}{r}{-\alpha}{-\alpha}$.  
Let $\hat{G} := (G; s) \oplus (H; S)$. 
Then,  for each $\beta\in \{+, -\}$, 
\begin{rmenum}
\item $\reachset{G}{s}{\beta}{-\alpha} \setminus \{s\}$ is equal to $\reachset{\hat{G}}{r}{\beta}{-\alpha} \cap ( V(G)\setminus \{s\} )$; and, 
\item $\reachset{H}{r}{\beta}{-\alpha}$ is equal to $\reachset{\hat{G}}{r}{\beta}{-\alpha} \cap V(H)$. 
\end{rmenum} 
\end{lemma}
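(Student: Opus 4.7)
The plan is to imitate the proof of Lemma~\ref{lem:construct2lin} step by step, swapping in the radial analogues of the ingredients used there. Set $G' := G - \ocut{G}{s}{\alpha}$ and $\hat{G}' := (G'; s)\oplus(H; S)$, and let $F\subseteq E(\hat{G})$ be the edges of $\hat{G}$ corresponding to $\ocut{G}{s}{\alpha}$, so that $\hat{G} = \hat{G}' + F$. By Lemma~\ref{lem:rrdel}, $G'$ is again an $\alpha$-radial with sublinear root $s$ and $\reachset{G'}{s}{\beta}{-\alpha} = \reachset{G}{s}{\beta}{-\alpha}$ for every $\beta$; moreover $\ocut{G'}{s}{\alpha} = \emptyset$, and each edge of $F$ carries sign $\alpha$ at its end in $S$.

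The heart of the argument is a radial analogue of Lemma~\ref{lem:trimmedr2const}: under $\ocut{G'}{s}{\alpha} = \emptyset$ and $H$ an $\alpha$-radial, I would show $\reachset{G'}{s}{\beta}{-\alpha}\setminus\{s\} = \reachset{\hat{G}'}{r}{\beta}{-\alpha}\cap(V(G)\setminus\{s\})$ and $\reachset{H}{r}{\beta}{-\alpha} = \reachset{\hat{G}'}{r}{\beta}{-\alpha}\cap V(H)$. The $\subseteq$ inclusions are immediate from concatenation of ditrails. For the reverse, take a $(\beta, -\alpha)$-ditrail $P$ in $\hat{G}'$ ending at $r$. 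The key observation is that $P$ cannot cross the interface $S$ nontrivially more than once: at every crossing the sign at the $S$-end is forced to be $-\alpha$ (because $\ocut{G'}{s}{\alpha}=\emptyset$), so a segment of $P$ between two successive crossings would, after identifying both $S$-ends with $s$, be a nontrivial $(-\alpha, -\alpha)$-ditrail from $s$ to $s$ in $G'$, contradicting the sublinearity of $s$. Hence $P$ crosses at most once, and the two cases $x\in V(G)\setminus\{s\}$ and $x\in V(H)$ follow.

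To close the argument I would add $F$ back. Applied with $\beta = -\alpha$, the intermediate step above gives $\reachset{\hat{G}'}{r}{-\alpha}{-\alpha}\cap V(H) = \reachset{H}{r}{-\alpha}{-\alpha}$, and since $\parNei{\hat{G}'}{V(\hat{G}')\setminus V(H)}\subseteq S \subseteq V(H)\setminus\reachset{H}{r}{-\alpha}{-\alpha}$, the hypothesis of Lemma~\ref{lem:edgeaddr} is met. Applying it with $\hat{G}'$ as the ambient graph and $F$ as the added $\alpha$-edges yields $\reachset{\hat{G}}{r}{\beta}{-\alpha} = \reachset{\hat{G}'}{r}{\beta}{-\alpha}$ for every $\beta$. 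Chaining this with the trimmed equalities and $\reachset{G}{s}{\beta}{-\alpha} = \reachset{G'}{s}{\beta}{-\alpha}$ gives both statements of the lemma.

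The main obstacle is the intermediate trimmed lemma: in the semiradial case (Lemma~\ref{lem:trimmedr2const}) only the starting sign of the ditrail was tracked, so a simple one-crossing argument sufficed. In the radial case we track the ending sign at $r$ as well, and the sublinearity of $s$ must be brought in precisely at the sign bookkeeping for interface crossings, so some care is needed to confirm that the two crossing signs at $s$ are indeed both $-\alpha$ and therefore produce the forbidden closed ditrail.
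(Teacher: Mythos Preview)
Your proposal is correct and is exactly the approach the paper intends: the paper does not give a separate proof but simply says the lemma ``can easily be confirmed by a similar discussion'' to Lemma~\ref{lem:construct2lin}, and your three-step outline (trim $G$ via Lemma~\ref{lem:rrdel}, establish the radial analogue of Lemma~\ref{lem:trimmedr2const}, then restore $F$ via Lemma~\ref{lem:edgeaddr}) is precisely that similar discussion with the semiradial ingredients swapped for their radial counterparts. The ``obstacle'' you flag is not actually new: in the trimmed setting the forbidden simple diear relative to $H$ is ruled out by the same sign argument as in Lemma~\ref{lem:trimmedr2const} (both crossing edges carry sign $-\alpha$ at their $S$-end because $\ocut{G'}{s}{\alpha}=\emptyset$, yielding a closed $(-\alpha,-\alpha)$-ditrail over $s$), so the bookkeeping is identical to the semiradial case.
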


\section{Construction of Radials and Semiradials}   \label{sec:construct} 
\subsection{Construction of Round Radials and Sharp Semiradials}  \label{sec:construct:rrlsr}

In Section~\ref{sec:construct:rrlsr}, 
we provide Lemmas~\ref{lem:const2srlg} and \ref{lem:const2rasg} to be used in Section~\ref{sec:characterization}. 
These lemmas are considered the converses of Lemmas~\ref{lem:lg2asg} and \ref{lem:asg2lg}, respectively. 
Lemma~\ref{lem:const2srlg} is  easily implied from Lemma~\ref{lem:construct2lin}. 

\begin{lemma} \label{lem:const2srlg}
Let $\alpha \in \{+, -\}$.  
Let $G$ be a round $\alpha$-radial with root $s$. 
Let $H$ be a linear $\alpha$-semiradial with root $r$.  
Assume that $G$ and $H$ are disjoint.  
Let $\hat{G} := (G; s) \oplus (H; V(H)\setminus \{r\})$.  
Then, $\hat{G}$ is a sharp $\alpha$-semiradial with root $r$ whose linear ground is $H$. 
\end{lemma}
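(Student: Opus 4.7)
My plan is to invoke Lemma~\ref{lem:construct2lin} with $S := V(H)\setminus\{r\}$ and then derive the three required claims (that $\hat{G}$ is an $\alpha$-semiradial, that it is sharp, and that its linear ground is $H$) from the resulting reachability equalities. To apply Lemma~\ref{lem:construct2lin}, I would check that $G$ is an $\alpha$-radial with sublinear root $s$ (true by the very definition of round radial), that $H$ is an $\alpha$-semiradial with root $r$ (given), and that $S \subseteq V(H)\setminus\reachsets{H}{r}{-\alpha}$; the last holds because $H$ is linear, so every vertex of $V(H)\setminus\{r\}$ lies in $\reachsets{H}{r}{\alpha}\setminus\reachsets{H}{r}{-\alpha}$. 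This yields, for each $\beta\in\{+,-\}$, the equalities $\reachset{G}{s}{\beta}{-\alpha}\setminus\{s\} = \reachsets{\hat{G}}{r}{\beta}\cap (V(G)\setminus\{s\})$ and $\reachsets{H}{r}{\beta} = \reachsets{\hat{G}}{r}{\beta}\cap V(H)$.

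Combining the first equality with $G$ being an $\alpha$-radial (so every vertex of $V(G)\setminus\{s\}$ lies in $\reachset{G}{s}{\alpha}{-\alpha}$), and the second equality with $H$ being an $\alpha$-semiradial, every vertex of $\hat{G}$ lies in $\reachsets{\hat{G}}{r}{\alpha}$, so $\hat{G}$ is an $\alpha$-semiradial with root $r$. For sharpness, the gluing sum construction places no edge of $\hat{G}$ between $V(G)\setminus\{s\}$ and $\{r\}$, so the neighbors of $r$ in $\hat{G}$ coincide with those in $H$; each such neighbor is linear in $H$ and, by the second equality, also linear in $\hat{G}$.

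For the linear ground being $H$, $H$ itself is clearly a valid candidate, so the task reduces to proving maximality. I would argue by contradiction: suppose some candidate $K$ (a linear $\alpha$-semiradial with root $r$ whose non-root vertices are all linear in $\hat{G}$) contains a vertex $x\in V(G)\setminus\{s\}$. Take an $\alpha$-ditrail $P$ from $x$ to $r$ in $K$ and let $e=uv$ be its first edge crossing into $V(H)$; note that $v\in V(H)\setminus\{r\}$ by the gluing sum construction. Let $\nu$ be the sign of $v$ over $e$. If $\nu=-\alpha$, then $u$ is a $-\alpha$-neighbor of $s$ in $G$, and roundness of $G$ forces $u$ to be strong; but $u$ is a non-root vertex of $K$ and hence linear in $\hat{G}$, which by the first equality means $u$ is sublinear in $G$, a contradiction. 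If $\nu=\alpha$, then the ditrail property forces the edge of $P$ immediately following $e$ to have sign $-\alpha$ at $v$, so $vPr$ is a $-\alpha$-ditrail from $v$ to $r$ in $\hat{G}$, contradicting the linearity of $v$ in $\hat{G}$. The main obstacle is precisely this maximality step, where careful sign-tracking at the crossing edge is needed and both the roundness of $G$ and the linearity of $H$ are crucially used; the other two claims follow directly from the reachability equalities.
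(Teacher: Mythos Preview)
Your proof is correct and follows exactly the approach the paper intends: the paper states only that the lemma ``is easily implied from Lemma~\ref{lem:construct2lin},'' and you carry out precisely that implication, verifying the hypotheses, reading off the semiradial and sharpness properties from the reachability equalities, and handling the maximality of $H$ via a clean case split on the sign $\nu$ at the first crossing into $V(H)$. Your argument that no candidate $K$ can contain a vertex of $V(G)\setminus\{s\}$ is sound in both cases (using roundness for $\nu=-\alpha$ and the preservation of linearity of $V(H)\setminus\{r\}$ in $\hat{G}$ for $\nu=\alpha$).
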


For proving Lemma~\ref{lem:const2rasg}, we first provide Lemma~\ref{lem:rradd}. 
Lemma~\ref{lem:rradd} can easily be implied from Lemma~\ref{lem:edgeaddr}, 
and is also used for proving Lemma~\ref{lem:const2rrtriplex}.

\begin{lemma} \label{lem:rradd} 
Let $\alpha \in \{+, -\}$.  
Let $G$ be an $\alpha$-radial with sublinear root $r$. 
Let $G'$ be a bidirected graph obtained by adding some edges between $r$ and other vertices 
in which the sign of $r$ is $\alpha$. 
Then, $\reachset{G}{r}{\beta}{-\alpha} = \reachset{G'}{r}{\beta}{-\alpha}$ holds for each $\beta\in \{+, -\}$. 
\end{lemma}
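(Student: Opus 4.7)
The plan is to apply Lemma~\ref{lem:edgeaddr} with the trivial singleton subgraph $H := G[\{r\}]$ playing the role of the embedded radial. Clearly $H$ is an induced subgraph of $G$, and it is trivially an $\alpha$-radial with root $r$, since the unique vertex $r$ reaches itself through the empty ditrail.

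Next, I would verify the disjointness hypothesis required by Lemma~\ref{lem:edgeaddr}. Because $r$ is a sublinear root of $G$, we have by definition $r \in \reachset{G}{r}{\alpha}{-\alpha}\setminus \reachset{G}{r}{-\alpha}{-\alpha}$, so in particular $r\notin \reachset{G}{r}{-\alpha}{-\alpha}$. Any vertex in $\parNei{G}{V(G)\setminus\{r\}}$ must be adjacent to some vertex outside $\{r\}$ while not lying outside $\{r\}$ itself, so $\parNei{G}{V(G)\setminus V(H)}\subseteq \{r\}$. Combined with $r\notin \reachset{G}{r}{-\alpha}{-\alpha}$, this yields $\parNei{G}{V(G)\setminus V(H)}\cap \reachset{G}{r}{-\alpha}{-\alpha}=\emptyset$, which is the main hypothesis of Lemma~\ref{lem:edgeaddr}. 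The same observation also gives $V(H)\setminus \reachset{G}{r}{-\alpha}{-\alpha}=\{r\}$.

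Finally, the edges added to form $G'$, namely the edges joining $r$ to other vertices of $G$ with sign $\alpha$ at the end $r$, are exactly the edges between $V(H)\setminus \reachset{G}{r}{-\alpha}{-\alpha}$ and $V(G)\setminus V(H)$ whose ends in $V(H)$ have sign $\alpha$. Hence Lemma~\ref{lem:edgeaddr} applies directly and yields $\reachset{G'}{r}{\beta}{-\alpha}=\reachset{G}{r}{\beta}{-\alpha}$ for each $\beta\in\{+,-\}$. There is essentially no obstacle in this argument; the only noteworthy point is recognizing that the sublinearity of $r$ is precisely what makes the singleton choice $H=G[\{r\}]$ satisfy the disjointness condition of Lemma~\ref{lem:edgeaddr}, which then immediately specializes to the desired statement.
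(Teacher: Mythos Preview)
Your proof is correct and follows essentially the same approach as the paper: you apply Lemma~\ref{lem:edgeaddr} to the trivial induced $\alpha$-radial $H=G[\{r\}]$, using sublinearity of $r$ to ensure $\{r\}\cap\reachset{G}{r}{-\alpha}{-\alpha}=\emptyset$. The paper's proof is just the one-line version of the same argument.
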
 
\begin{proof} 
Because $G. \{r\}$ is a trivial $\alpha$-radial, and $\{r\}\cap \reachset{G}{r}{-\alpha}{-\alpha} = \emptyset$, 
Lemma~\ref{lem:edgeaddr} implies the claim. 
\end{proof}

Lemmas~\ref{lem:construct2r} and \ref{lem:rradd} imply Lemma~\ref{lem:const2rasg}.

\begin{lemma} \label{lem:const2rasg} 
Let $\alpha \in \{+, -\}$.  
Let $G$ be a sharp $\alpha$-semiradial with  root $s$. 
Let $H$ be an almost strong $\alpha$-radial with root $r$. 
Assume that $G$ and $H$ are disjoint.  
Let $\hat{G}$ be a gluing sum $(G; s) \oplus (H; V(H)\setminus \{r\})$ 
or a bidirected graph obtained from this graph by adding some edges joining $r$ and $E_G[r, V(G)\setminus \{s\}]$ 
in which $r$ has the sign $\alpha$.  
Then, $\hat{G}$ is a round $\alpha$-radial with root $r$ whose almost strong ground is $H$. 
\end{lemma}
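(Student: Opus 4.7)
The plan is to decompose $\hat G$ as $\hat G = \tilde G + F$, where $\tilde G := (G;s)\oplus(H; V(H)\setminus\{r\})$ is the base gluing sum and $F$ is the (possibly empty) set of added edges joining $r$ to vertices of $V(G)\setminus\{s\}$, each with $r$-sign $\alpha$. We will extract reachability information for $\tilde G$ via Lemma~\ref{lem:construct2r}, transfer it to $\hat G$ via Lemma~\ref{lem:rradd}, and finally pinpoint the almost strong ground through a projection argument back into $G$.

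Since $H$ is almost strong, every vertex of $V(H)\setminus\{r\}$ is strong in $H$, so Lemma~\ref{lem:construct2r} with $S := V(H)\setminus\{r\}$ yields, for each $\beta\in\{+,-\}$, the identities $\reachsets{G}{s}{\beta}\setminus\{s\} = \reachset{\tilde G}{r}{\beta}{-\alpha}\cap (V(G)\setminus\{s\})$ and $\reachset{H}{r}{\beta}{-\alpha} = \reachset{\tilde G}{r}{\beta}{-\alpha}\cap V(H)$. Combining these with the fact that $G$ is an $\alpha$-semiradial and $H$ is an $\alpha$-radial shows that every vertex of $\tilde G$ has an $(\alpha,-\alpha)$-ditrail to $r$, so $\tilde G$ is an $\alpha$-radial with root $r$; sublinearity of $r$ in $H$ transfers to $\tilde G$ via the second identity applied at $v=r$. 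All edges of $\tilde G$ incident to $r$ lie in $E(H)$, whence $\oNei{\tilde G}{r}{-\alpha} = \oNei{H}{r}{-\alpha}\subseteq V(H)\setminus\{r\}$, and each such vertex is strong in $\tilde G$ by the identities. Hence $\tilde G$ is a round $\alpha$-radial with sublinear root $r$. Applying Lemma~\ref{lem:rradd} to append the edges of $F$ preserves each $\reachset{\cdot}{r}{\beta}{-\alpha}$, and since $F$ contributes nothing to $\oNei{\cdot}{r}{-\alpha}$, the graph $\hat G$ inherits the round sublinear-root $\alpha$-radial structure.

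It remains to show that the almost strong ground $J$ of $\hat G$ equals $H$. Clearly $H\subseteq J$, since $H$ is itself an almost strong $\alpha$-radial subgraph of $\hat G$ with root $r$. For the converse, suppose for contradiction that some $v\in V(J)\setminus V(H)$ exists; necessarily $v\in V(G)\setminus\{s\}$. Since $v$ is strong in $J$, there exist both an $(\alpha,-\alpha)$-ditrail and a $(-\alpha,-\alpha)$-ditrail from $v$ to $r$ entirely within $J$. The crucial observation is that the first edge by which such a ditrail leaves $V(G)\setminus\{s\}$ cannot belong to $F$: if it did, its other endpoint would be $r$ with $r$-sign $\alpha$, so the continuation of the ditrail from $r$ back to $r$ with the prescribed ending $-\alpha$ would form a non-trivial closed $(-\alpha,-\alpha)$-ditrail at $r$, contradicting the sublinearity of $r$ established above. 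So the first transition edge is a gluing edge, and the initial segment of each of these two ditrails projects to a ditrail in $G$ from $v$ to $s$ with the same initial sign, whose intermediate vertices all lie in $V(J)\setminus V(H)$. Consequently $v$ is absolute within $G[(V(J)\setminus V(H))\cup\{s\}]$, and since this holds for every such $v$, the induced subgraph $G[(V(J)\setminus V(H))\cup\{s\}]$ is an absolute $\alpha$-sub-semiradial of $G$ with root $s$. By maximality it is contained in the absolute ground of $G$, which is trivial by sharpness; hence $V(J)\setminus V(H) = \emptyset$. Since the only edges of $\hat G$ with both endpoints in $V(H)$ are those of $E(H)$, we conclude $E(J) = E(H)$ and thus $J = H$.

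The main obstacle is the projection step above, and specifically the argument that the first transition of any in-$J$ ditrail from $V(G)\setminus\{s\}$ into $V(H)$ cannot be an added edge of $F$. This rests on the closed-ditrail contradiction with $r$'s sublinearity in $\hat G$, a clean but non-obvious use of the structure established earlier. The remainder is routine bookkeeping atop Lemmas~\ref{lem:construct2r} and~\ref{lem:rradd}.
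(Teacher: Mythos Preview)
Your proof is correct and follows the same approach as the paper's: apply Lemma~\ref{lem:construct2r} to the base gluing sum and then Lemma~\ref{lem:rradd} for the additional edges. The paper's own proof is two sentences long and simply asserts that Lemma~\ref{lem:construct2r} yields the almost strong ground conclusion, whereas you supply a full justification.

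One remark on efficiency: your projection argument for $V(J)\subseteq V(H)$ is correct but heavier than necessary. The reachability identities from Lemma~\ref{lem:construct2r} already give, for $v\in V(G)\setminus\{s\}$, that $v\in\reachset{\hat G}{r}{-\alpha}{-\alpha}$ if and only if $v\in\reachsets{G}{s}{-\alpha}$; since $G$ is sharp, no such $v$ is absolute in $G$, so no such $v$ is strong in $\hat G$. As every non-root vertex of the almost strong ground $J$ must be strong in $\hat G$, this forces $V(J)\setminus\{r\}\subseteq V(H)\setminus\{r\}$ immediately, without tracing ditrails inside $J$. Your observation that sharpness forbids loops at $s$, hence $\hat G[V(H)]=H$, then finishes the identification $J=H$ as you wrote.
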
 
\begin{proof} 
Let $\hat{G}'$ be a gluing sum $(G; r) \oplus (H; V(H)\setminus \{s\})$. 
Then, Lemma~\ref{lem:construct2r} implies that $\hat{G}'$ is a round $\alpha$-radial with root $r$ 
in which $H$ is the almost strong ground. 
Lemma~\ref{lem:rradd} further implies the claim for $\hat{G}$. 
\end{proof}

\subsection{Construction of General Radials and Semiradials}  \label{sec:construct:rsr}

In Section~\ref{sec:construct:rsr}, 
we provide Lemmas~\ref{lem:const2srag}, \ref{lem:const2rstg}, \ref{lem:triplex2const}, and \ref{lem:const2rrtriplex} 
to be used in Section~\ref{sec:characterization}. 
These lemmas are the converses of Lemmas~\ref{lem:abg2lg}, \ref{lem:stg2lg}, \ref{lem:shell}, and \ref{lem:shell2astg}, respectively. 
Lemma~\ref{lem:const2srag} is easily implied from Lemma~\ref{lem:construct2sr}.

\begin{lemma} \label{lem:const2srag} 
Let $\alpha \in \{+, -\}$.   
Let $G$ be a sharp $\alpha$-semiradial with root $s$.   
Let $H$ be an absolute $\alpha$-semiradial with root $r$. 
Assume that $G$ and $H$ are disjoint.  
Let $\hat{G} := (G; s) \oplus (H; V(H))$.  
Then, $\hat{G}$ is an $\alpha$-semiradial with root $r$  whose absolute ground is $H$. 
\end{lemma}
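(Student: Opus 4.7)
The plan is to deduce this directly from Lemma~\ref{lem:construct2sr} applied with $S := V(H)$. Since $H$ is an absolute $\alpha$-semiradial with root $r$, every vertex of $V(H)$ is absolute in $H$, so $S$ is a valid choice. That lemma yields, for each $\beta \in \{+,-\}$,
\[
\reachsets{G}{s}{\beta} \setminus \{s\} = \reachsets{\hat{G}}{r}{\beta} \cap (V(G) \setminus \{s\}), \quad \reachsets{H}{r}{\beta} = \reachsets{\hat{G}}{r}{\beta} \cap V(H).
\]

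First I would verify that $\hat{G}$ is an $\alpha$-semiradial with root $r$. Because $G$ and $H$ are themselves $\alpha$-semiradials with their respective roots, $\reachsets{G}{s}{\alpha} = V(G)$ and $\reachsets{H}{r}{\alpha} = V(H)$; substituting $\beta = \alpha$ in the two equalities above gives $\reachsets{\hat{G}}{r}{\alpha} = V(\hat{G})$. Next I would show that $H$, viewed as a subgraph of $\hat{G}$, is an absolute $\alpha$-semiradial with root $r$: using both values of $\beta$ in the second equality together with the absoluteness of every $v \in V(H)$ in $H$, we get $v \in \reachsets{\hat{G}}{r}{\alpha} \cap \reachsets{\hat{G}}{r}{-\alpha}$. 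Hence $H$ lies inside the absolute ground of $\hat{G}$.

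The main obstacle is maximality: that the absolute ground $\hat{H}$ of $\hat{G}$ equals $H$ rather than strictly containing it. Suppose for contradiction there is some $v \in V(\hat{H}) \setminus V(H)$. Then $v \in V(G) \setminus \{s\}$ and is absolute in $\hat{G}$, so by the first equality of Lemma~\ref{lem:construct2sr} applied with both $\beta = \alpha$ and $\beta = -\alpha$, the vertex $v$ is absolute in $G$ as well. Take a $-\alpha$-ditrail $Q$ in $G$ from $v$ to $s$ and let $u$ be its last vertex before $s$. Then $u \in \parNei{G}{s}$, and the single-edge subditrail $uQs$ witnesses $u \in \reachsets{G}{s}{-\alpha}$. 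But sharpness of $G$ forces $u$ to be linear, meaning $u \notin \reachsets{G}{s}{-\alpha}$; this contradiction shows no such $v$ exists. Hence $V(\hat{H}) = V(H)$.

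Finally, I would argue that the equality of vertex sets upgrades to equality of subgraphs. The description of the gluing sum shows that every edge of $\hat{G}$ with exactly one endpoint in $V(H)$ has its other endpoint in $V(G) \setminus \{s\}$, and the only edges of $\hat{G}$ with both endpoints in $V(H)$ either come from $E(H)$ or are images of loops over $s$ in $G$. Since $E(H) \subseteq E(\hat{H})$ already holds and a loop-image contained in $\hat{H}$ could only yield trivially closed ditrails at the loop vertex, we can match $E(\hat{H}) = E(H)$, concluding $\hat{H} = H$. The delicate step throughout is the sharpness-driven argument in the third paragraph, which is precisely what prevents non-root vertices of $G$ from sneaking into the absolute ground of $\hat{G}$.
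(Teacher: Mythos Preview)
Your overall strategy---invoke Lemma~\ref{lem:construct2sr} with $S=V(H)$ and read off everything from the two displayed equalities---is exactly the route the paper indicates, and your first two paragraphs are correct.

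The gap is in the maximality argument. You take an arbitrary $v\in V(\hat H)\setminus V(H)$, deduce that $v$ is absolute in $G$, pick a $-\alpha$-ditrail $Q$ from $v$ to $s$, and let $u$ be the penultimate vertex. You then assert that the one-edge ditrail $uQs$ witnesses $u\in\reachsets{G}{s}{-\alpha}$. But the starting sign of $uQs$ is the sign of $u$ on the last edge of $Q$, and nothing forces that sign to be $-\alpha$. If it happens to be $\alpha$, then $uQs$ only shows $u\in\reachsets{G}{s}{\alpha}$, which does not conflict with $u$ being linear; in that case $vQu$ is a $(-\alpha,-\alpha)$-ditrail between an absolute vertex $v$ and a linear vertex $u$, and Lemma~\ref{lem:nobypass} does not apply because $v$ is not linear. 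So the contradiction is not established.

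The repair is simple: instead of an arbitrary $v$, use connectivity of $\hat H$ to pick $v\in V(\hat H)\setminus V(H)$ that is adjacent \emph{in $\hat H$} to some vertex of $V(H)$. By the description of the gluing sum, any edge of $\hat G$ between $V(G)\setminus\{s\}$ and $V(H)$ is the image of an edge of $\parcut{G}{s}$, so this $v$ is itself a neighbour of $s$ in $G$ and hence linear by sharpness. On the other hand $v\in V(\hat H)$ forces $v\in\reachsets{\hat G}{r}{-\alpha}$, and Lemma~\ref{lem:construct2sr} with $\beta=-\alpha$ then gives $v\in\reachsets{G}{s}{-\alpha}$, the desired contradiction. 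Your final paragraph on edges is also loose: if $G$ carries a loop over $s$, its image is an edge between vertices of $V(H)$, and $H$ together with that edge is again an absolute $\alpha$-semiradial, so a clean equality $E(\hat H)=E(H)$ needs an explicit word about why such loops are harmless or absent.
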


Lemma~\ref{lem:const2rstg} is easily implied from Lemma~\ref{lem:construct2r}.

\begin{lemma} \label{lem:const2rstg} 
Let $\alpha \in \{+, -\}$.  
Let $G$ and $H$ be disjoint bidirected graphs such that 
$G$ is a sharp $\alpha$-semiradial with root $s$, 
and  $H$ is a strong $\alpha$-radial with root $r$.  
Then, $(G; s)\oplus (H; V(H))$  
is an $\alpha$-radial with strong root $s$ whose strong ground is $H$.  
\end{lemma}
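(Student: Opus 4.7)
The plan is to reduce the lemma to Lemma~\ref{lem:construct2r} by choosing $S := V(H)$, an admissible choice because every vertex of the strong $\alpha$-radial $H$ is strong with respect to $r$ by hypothesis. The lemma then supplies, for each $\beta\in\{+,-\}$, the identities
\[
\reachsets{G}{s}{\beta}\setminus\{s\}=\reachset{\hat G}{r}{\beta}{-\alpha}\cap\bigl(V(G)\setminus\{s\}\bigr)
\]
and
\[
\reachset{H}{r}{\beta}{-\alpha}=\reachset{\hat G}{r}{\beta}{-\alpha}\cap V(H),
\]
which drive everything that follows.

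Next I verify the three assertions in turn. First, $\hat G$ is an $\alpha$-radial with root $r$: every $v\in V(H)$ lies in $\reachset{H}{r}{\alpha}{-\alpha}$ (since $H$ is a strong $\alpha$-radial), and every $v\in V(G)\setminus\{s\}$ lies in $\reachsets{G}{s}{\alpha}$ (since $G$ is an $\alpha$-semiradial with root $s$); the two identities push both kinds of vertices into $\reachset{\hat G}{r}{\alpha}{-\alpha}$. Second, $r$ is strong in $\hat G$: applying the second identity with $\beta=\alpha$ and $\beta=-\alpha$ transfers the strongness of $r$ in $H$ over to $\hat G$. Third, the same identity shows that every $v\in V(H)$ remains strong in $\hat G$, so $H$ is a strong sub-radial of $\hat G$ with root $r$.

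The main obstacle is showing the maximality of $H$ as the strong ground of $\hat G$. I argue by contradiction: suppose some $v\in V(\hat G)\setminus V(H)=V(G)\setminus\{s\}$ is strong in $\hat G$. Then the first identity forces $v\in\reachsets{G}{s}{\alpha}\cap\reachsets{G}{s}{-\alpha}$, so $v$ is absolute in $G$. Since $G$ is sharp, its absolute ground is trivial, and to complete the contradiction I would exclude non-root absolute vertices in the same spirit as the proof of Lemma~\ref{lem:abgnei2lin}: the $-\alpha$-ditrail from $v$ to $s$, together with Lemma~\ref{lem:neigh2ear} applied to the trivial absolute ground $\{s\}$, produces a diear relative to $\{s\}$, which by Theorem~\ref{thm:asr} contradicts the maximality of the trivial absolute ground. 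Hence no strong vertex of $\hat G$ lies outside $V(H)$, and $H$ is the strong ground.
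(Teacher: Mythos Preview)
Your approach via Lemma~\ref{lem:construct2r} with $S=V(H)$ matches the paper's, and your verification that $\hat G$ is an $\alpha$-radial with strong root $r$ in which every vertex of $H$ remains strong is correct.

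The gap is in your maximality argument. You try to show that a sharp $\alpha$-semiradial has no absolute vertex other than its root, but this is false. Take $\alpha=+$, vertices $s,a,b,c$, and edges $sa$ (with $a$-sign $+$), $ab$ (with $a$-sign $-$, $b$-sign $+$), and two parallel edges $bc_1,bc_2$ both with $b$-sign $-$ and with $c$-signs $+,-$ respectively. The only neighbor of $s$ is $a$, and one checks that $a$ is linear, so $G$ is sharp; yet $b$ is absolute via $(b,ab,a,sa,s)$ and $(b,bc_1,c,bc_2,b,ab,a,sa,s)$. Moreover, your proposed invocation of Lemma~\ref{lem:neigh2ear} cannot go through: that lemma requires a vertex $x$ adjacent to $s$ together with a $-\beta$-ditrail from $x$ to $s$, where $\beta$ is the sign of $x$ on $xs$; but every neighbor of $s$ in a sharp semiradial is linear, so the only ditrails from $x$ to $s$ are $\alpha$-ditrails, while $\beta=\alpha$ is forced (else $(x,xs,s)$ would itself be a $-\alpha$-ditrail). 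The hypothesis of Lemma~\ref{lem:neigh2ear} is therefore never met.

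The maximality of $H$ does hold, but the argument is more localized. Suppose $H'$ is a strong $\alpha$-radial subgraph of $\hat G$ with root $r$ and $V(H')\setminus V(H)\neq\emptyset$. Tracing an $(\alpha,-\alpha)$-ditrail in $H'$ from any $v\in V(H')\setminus V(H)$ toward $r\in V(H)$, one finds some $w\in V(H')\setminus V(H)$ adjacent in $\hat G$ to a vertex of $V(H)$. In $G$ this $w$ is a neighbor of $s$, hence linear by sharpness, hence $w\notin\reachset{\hat G}{r}{-\alpha}{-\alpha}$ by your first identity. But every vertex of a strong radial $H'$ is strong in $\hat G$, a contradiction. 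Thus $V(H')\subseteq V(H)$, and $H$ is the strong ground. The point is that only \emph{neighbors} of $V(H)$ need to be sublinear in $\hat G$, and this is precisely what sharpness delivers through Lemma~\ref{lem:construct2r}; arbitrary vertices of $V(G)\setminus\{s\}$ may well be strong in $\hat G$ without disturbing the conclusion.
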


Finally, we prove two lemmas for radials with sublinear root $r$.  
Lemma~\ref{lem:triplex2const} is implied from Lemmas~\ref{lem:edgeaddr} and \ref{lem:construct2r}.

\begin{lemma} \label{lem:triplex2const} 
Let $\alpha \in \{+, -\}$.   
Let $H_1$ be an almost strong $\alpha$-radial with root $r$, 
$H_2$ be a sublinear $\alpha$-radial with root $r$ such that $V(H_2)\cap V(H_1) = \{r\}$, 
and let $H_3$ be a linear $\alpha$-semiradial with root $s$ such that $V(H_3)\cap V(H_i) = \emptyset$ for every $i \in \{1, 2\}$.  
Assume that if $V(H_1) = \{r\}$ then $V(H_3) = \{s\}$. 
Let $\hat{H}$ be a gluing sum  $(H_3; s) \oplus (H_1 + H_2; V(H_1)\setminus \{r\})$. 
Let $I$ be $\hat{H}$ or a bidirected graph obtained from $\hat{H}$ by adding some edges between 
\begin{rmenum} 
\item $V(H_2)\setminus \{r\}$ and $( V(H_1)\setminus \{r\}) \cup ( V(H_3)\setminus \{s\})$ in which the end in $V(H_2)$ has sign $\alpha$,  or 
\item $r$ and $V(H_3)\setminus \{s\}$ in which $r$ has sign $\alpha$. 
\end{rmenum} 
Then,  $I$ is a triplex $\alpha$-radial with root $r$  
in which 
$H_1$ is the almost strong ground, 
and $V(H_2) \setminus \{r\}$ and $ V(H_3)\setminus \{s\}$ are the first and second shells, respectively. 
\end{lemma}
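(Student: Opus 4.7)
The strategy is to assemble $I$ in three stages, invoking the construction lemmas of Sections~\ref{sec:edgelem} and~\ref{sec:constlem} to track reachability, and then to read off the almost strong ground and the two shells.

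First, $H_1+H_2$ is an $\alpha$-radial with sublinear root $r$: Lemma~\ref{lem:r2union} gives that it is an $\alpha$-radial, and since $V(H_1)\cap V(H_2)=\{r\}$, any closed $(-\alpha,-\alpha)$-ditrail at $r$ decomposes at its occurrences of $r$ into closed subditrails lying entirely in $H_1$ or in $H_2$; a straightforward sign-telescoping argument shows that at least one of these subditrails must itself be a $(-\alpha,-\alpha)$-subditrail, contradicting sublinearity of $r$ in each summand. Each vertex of $V(H_1)\setminus\{r\}$ is strong in $H_1+H_2$ since it is already strong in $H_1$. Because $H_3$ is linear, every neighbor of $s$ is linear, so $H_3$ is sharp; Lemma~\ref{lem:construct2r} applied to $\hat{H}=(H_3;s)\oplus(H_1+H_2;V(H_1)\setminus\{r\})$ then yields that $\hat{H}$ is an $\alpha$-radial with sublinear root $r$, together with the decomposition $\reachset{\hat{H}}{r}{\beta}{-\alpha}\cap V(H_1+H_2)=\reachset{H_1+H_2}{r}{\beta}{-\alpha}$ and $\reachset{\hat{H}}{r}{\beta}{-\alpha}\cap(V(H_3)\setminus\{s\})=\reachsets{H_3}{s}{\beta}\setminus\{s\}$ for every $\beta$.

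To obtain $I$, I would attach the optional edges. Lemma~\ref{lem:edgeaddr} handles the type-(i) edges, taking the induced subgraph in its hypothesis to be $H_2$: the hypothesis holds because the only $V(H_2)$-neighbor of $V(\hat{H})\setminus V(H_2)$ is $r$, which lies outside $\reachset{\hat{H}}{r}{-\alpha}{-\alpha}$ by sublinearity, and the $V(H_2)$-ends of the new edges lie in $V(H_2)\setminus\{r\}$, disjoint from $\reachset{\hat{H}}{r}{-\alpha}{-\alpha}$ as well. Lemma~\ref{lem:rradd} handles the type-(ii) edges. Both lemmas preserve $\reachset{\cdot}{r}{\beta}{-\alpha}$, so $I$ is an $\alpha$-radial with sublinear root $r$ whose reachability description matches $\hat{H}$'s. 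From this it follows that $\reachset{I}{r}{-\alpha}{-\alpha}=V(H_1)\setminus\{r\}$ (the $V(H_3)$-contribution is empty by linearity of $H_3$, and the $V(H_2)$-contribution by the decomposition argument of the previous paragraph), so the strong vertices of $I$ are exactly $V(H_1)\setminus\{r\}$ and every other non-root vertex of $I$ is sublinear. Since no added edge has both endpoints in $V(H_1)$, the induced subgraph $I[V(H_1)]$ equals $H_1$, which is already almost strong, so $H_1$ is the almost strong ground. Because every non-$H_1$ vertex is sublinear in $I$ and $I$ is itself an $\alpha$-radial containing $H_1$, the extended ground of $I$ is $I$, and $I$ is therefore a triplex.

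The main obstacle is partitioning the shell correctly. The inclusion $V(H_2)\setminus\{r\}\subseteq S_1$ is immediate from the sublinear $(\alpha,-\alpha)$-ditrails supplied by $H_2$, which avoid $V(H_1)\setminus\{r\}$ since $V(H_2)\cap V(H_1)=\{r\}$. To show $V(H_3)\setminus\{s\}\subseteq S_2$, I would argue by contradiction: suppose an $(\alpha,-\alpha)$-ditrail $P=v_0,\ldots,v_k$ from $x=v_0\in V(H_3)\setminus\{s\}$ to $r=v_k$ in $I$ avoids $V(H_1)\setminus\{r\}$, and partition its vertex sequence into maximal blocks in $V(H_2)$ and in $V(H_3)\setminus\{s\}$. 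Every transition between blocks traverses a type-(i) or type-(ii) edge, both of which are signed $\alpha$ at the $V(H_2)$-end; ditrail consistency at the first and last vertices of a $V(H_2)$-block then forces the block to realize a $(-\alpha,-\alpha)$-subditrail in $H_2$ between those vertices. Focusing on the final $V(H_2)$-block, which terminates at $r=v_k$ with end sign $-\alpha$: this block has positive length (otherwise the last edge of $P$ would be a type-(ii) edge, contradicting end sign $-\alpha$), and its first vertex $v_i$ gives a $(-\alpha,-\alpha)$-ditrail in $H_2$ from $v_i$ to $r$, contradicting the sublinearity in $H_2$ of $v_i$ if $v_i\in V(H_2)\setminus\{r\}$, or of $r$ itself if the block begins at $r$. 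Hence no such $P$ exists, and the shells are as claimed.
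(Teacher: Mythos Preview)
Your proof is correct and follows essentially the same route as the paper's: establish the reachability structure of $H_1+H_2$, apply Lemma~\ref{lem:construct2r} for the gluing sum $\hat{H}$, and then invoke Lemma~\ref{lem:edgeaddr} (together with Lemma~\ref{lem:rradd}, though a single application of Lemma~\ref{lem:edgeaddr} with $H=H_2$ would cover both edge types, since $r\in V(H_2)$) to pass to $I$. The paper's own proof is terser---it stops after asserting $\reachset{I}{r}{\beta}{-\alpha}=\reachset{\hat{H}}{r}{\beta}{-\alpha}$ and says ``the claim follows''---whereas you additionally spell out why $H_1$ is the almost strong ground and why the shell splits as $V(H_2)\setminus\{r\}$ and $V(H_3)\setminus\{s\}$; that block-analysis for the second shell is a genuine detail the paper omits.
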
 
\begin{proof} 
It can easily be confirmed that 
$H_1 + H_2$ is an $\alpha$-radial with root $r$ 
for which $V(H_1)\setminus \{r\} = \reachset{H_1 + H_2}{r}{-\alpha}{-\alpha}$. 
Hence, Lemma~\ref{lem:construct2r} implies that 
$V(H_1)\setminus \{r\} =  \reachset{\hat{H}}{r}{-\alpha}{-\alpha}$, 
and $V(H_2) \cup V(H_3)\setminus \{s\} = \reachset{\hat{H}}{r}{\alpha}{-\alpha}\setminus \reachset{\hat{H}}{r}{-\alpha}{-\alpha}$. 
Therefore, 
Lemma~\ref{lem:edgeaddr} implies  
$\reachset{I}{r}{\beta}{-\alpha} = \reachset{\hat{H}}{r}{\beta}{-\alpha}$ for every $\beta \in \{+, -\}$. 
Thus, the claim follows. 
\end{proof}

Lemma~\ref{lem:const2rrtriplex} is implied from Lemmas~\ref{lem:construct2sublin} and \ref{lem:rradd}.

\begin{lemma} \label{lem:const2rrtriplex} 
Let $\alpha \in \{+, -\}$.  
Let $G$ and $H$ be disjoint bidirected graphs such that 
 $G$ is a round $\alpha$-radial with sublinear root $s$, and 
 $H$ is a triplex $\alpha$-radial with root $r$ whose shell $S$ is nonempty.  
Let $\hat{G}$ be a gluing sum $(G; s) \oplus (H; S)$ 
or a bidirected graph obtained from the gluing sum by adding some edges joining $r$ and $V(G)\setminus \{s\}$ 
in which $r$ has the sign $\alpha$. 
Then, $\hat{G}$ is an $\alpha$-radial with sublinear root $r$ whose extended ground is $H$. 
\end{lemma}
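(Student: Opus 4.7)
The plan is to reduce to the pure gluing sum and then invoke the two cited lemmas to determine the $(\cdot, -\alpha)$-reachability sets of $\hat{G}$; once these are in hand, the characterizations of being a radial, the root being sublinear, the almost strong ground, and the extended ground all follow by inspection.

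First I would let $\hat{G}' := (G; s) \oplus (H; S)$, so that $\hat{G}$ is either $\hat{G}'$ or $\hat{G}'$ with some additional edges joining $r$ and $V(G)\setminus \{s\}$, each $\alpha$-signed at $r$. Because $H$ is a triplex, every vertex of its shell $S$ is sublinear in $H$, giving $S \subseteq V(H)\setminus \reachset{H}{r}{-\alpha}{-\alpha}$ and placing us in the hypothesis of Lemma~\ref{lem:construct2sublin}. That lemma gives $\reachset{\hat{G}'}{r}{\beta}{-\alpha}\cap (V(G)\setminus\{s\}) = \reachset{G}{s}{\beta}{-\alpha}\setminus\{s\}$ and $\reachset{\hat{G}'}{r}{\beta}{-\alpha}\cap V(H) = \reachset{H}{r}{\beta}{-\alpha}$ for each $\beta\in\{+, -\}$; in particular $\hat{G}'$ is an $\alpha$-radial with sublinear root $r$ inherited from the triplex $H$. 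Lemma~\ref{lem:rradd} therefore applies to the added edges and extends these identities verbatim to $\hat{G}$, yielding that $\hat{G}$ is an $\alpha$-radial with sublinear root $r$.

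Next I would identify the almost strong ground $\hat{H}_1$ of $\hat{G}$ with the almost strong ground $H_1$ of $H$. Any almost strong radial subgraph $J$ of $\hat{G}$ with root $r$ has every non-root vertex strong in $\hat{G}$; by the reachability identities this places each such vertex of $V(H)$ inside $V(H_1)$ and each such vertex of $V(G)\setminus\{s\}$ inside $V(G)\setminus\{s\}$ and strong in $G$. But any path in $J$ from a $V(G)\setminus\{s\}$-vertex to $r$ must enter $V(H)$ either through a gluing edge into some $t\in S$ or through an $\alpha$-signed-at-$r$ added edge. The former is impossible because $t$ is sublinear (not strong) in $\hat{G}$ and so cannot be a non-root vertex of $J$; the latter is impossible because the $\alpha$-sign at $r$ would force the remainder of the ditrail to be a closed $(-\alpha, -\alpha)$-ditrail at $r$ inside $H$, contradicting the sublinearity of $r$ in $H$. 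Hence $V(J)\subseteq V(H_1)$, and maximality gives $\hat{H}_1 = H_1$, after which $H$ is patently a valid candidate for the extended ground of $\hat{G}$.

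The main obstacle is to show $H$ is in fact the maximum such candidate; no sublinear vertex $x\in V(G)\setminus\{s\}$ of $\hat{G}$ can be adjoined, and this is exactly where the roundness of $G$ enters. For an arbitrary $(\alpha, -\alpha)$-ditrail $P$ from $x$ to $r$ in $\hat{G}$, I would inspect the last crossing of $P$ from $V(G)\setminus\{s\}$ into $V(H)$. Added-edge crossings are ruled out by the same closed-ditrail argument used above. So the last crossing uses a gluing edge into some $t\in S$; matching signs at $t$ with $t\in \reachset{H}{r}{\alpha}{-\alpha}\setminus \reachset{H}{r}{-\alpha}{-\alpha}$ forces the $s$-sign of the corresponding $G$-edge to be $-\alpha$, placing its other $G$-endpoint in $\oNei{G}{s}{-\alpha}$. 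By roundness this endpoint is strong in $G$, hence strong in $\hat{G}$, so it lies outside $V(H_1)$ and is not sublinear, and therefore cannot belong to any extended-ground candidate containing $x$. As every choice of $P$ forces the use of such a vertex, $x$ admits no $(\alpha,-\alpha)$-ditrail to $r$ confined to an extended-ground candidate, so $x$ is excluded and the extended ground of $\hat{G}$ is exactly $H$.
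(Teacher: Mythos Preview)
Your approach is the same as the paper's: apply Lemma~\ref{lem:construct2sublin} to the pure gluing sum $\hat{G}'$, then Lemma~\ref{lem:rradd} for the added edges. The paper's proof stops there, simply asserting that the reachability identities from Lemma~\ref{lem:construct2sublin} make $H$ the extended ground of $\hat{G}'$; you instead spell this out by first identifying the almost strong ground of $\hat{G}$ with that of $H$ and then using roundness of $G$ to exclude every vertex of $V(G)\setminus\{s\}$ from the extended ground. That extra work is correct and is exactly where the roundness hypothesis actually enters, so your version is more complete than the paper's terse argument.

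Two small imprecisions are worth noting. In your almost-strong-ground paragraph, the closed $(-\alpha,-\alpha)$-ditrail you obtain lies in $J\subseteq \hat{G}$, not necessarily inside $H$; the contradiction comes from $r$ being sublinear in $\hat{G}$ (or equivalently from $J$ being almost strong), not from sublinearity of $r$ in $H$. Second, your final paragraph pins down $V(I)=V(H)$ for the extended ground $I$ of $\hat{G}$, but does not address edges of $\hat{G}$ between vertices of $S$ arising from loops over $s$ in $G$; strictly speaking such edges could be adjoined to $H$ while preserving all the defining conditions, so $E(I)$ might exceed $E(H)$. This is arguably a defect in the lemma's phrasing rather than in your argument, and the paper's own proof does not address it either.
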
 
\begin{proof} 
Let $\hat{G}'$ be $(G; s) \oplus (H; S)$. 
Then, Lemma~\ref{lem:construct2sublin} implies that 
$\hat{G}$ is an $\alpha$-radial with sublinear root $r$ 
in which $H$ is the extended ground and $S$ is its shell. 
Lemma~\ref{lem:rradd} further implies the claim for $\hat{G}$. 
\end{proof}

\section{Characterization} \label{sec:characterization} 
\subsection{Characterizations of Round Radials and Sharp Semiradials} \label{sec:characterization:rrssr}

In Section~\ref{sec:characterization:rrssr},  
we provide constructive characterizations of round radials and sharp semiradials.

\begin{definition} 
Let $\alpha\in\{+, -\}$. 
Two sets $\rrset{r}{\alpha}$ and $\lsrset{r}{\alpha}$ of bidirected graphs with vertex $r$ are defined as follows: 
\begin{rmenum} 
\item Every almost strong $\alpha$-radial with root $r$ is a member of $\rrset{r}{\alpha}$. 
\item Every linear $\alpha$-semiradial with root $r$ is a member of $\lsrset{r}{\alpha}$. 
\item Let $G\in \rrset{r}{\alpha}$. Then, a bidirected graph obtained from $G$ by adding some edges  
of the form $rv$, where $v\in V(G)$, in which  $r$ has sign $\alpha$ is a member of $\rrset{r}{\alpha}$. 
\item Let $G \in \lsrset{s}{\alpha}$, and let $H$ be an almost strong $\alpha$-radial with root $r$ such that $V(G)\cap V(H) = \emptyset$. 
Then, any gluing sum $(G; s) \oplus (H; V(H)\setminus \{r\})$ is a member of $\rrset{r}{\alpha}$. 
\item 
Let $G \in \rrset{s}{\alpha}$,  and let $H$ be a linear $\alpha$-semiradial $H$ with root $r$ such that $V(G)\cap V(H) = \emptyset$. 
Then, any gluing sum $(G; s) \oplus (H; V(H)\setminus \{r\})$ is a member of $\lsrset{r}{\alpha}$. 
\end{rmenum}   
\end{definition}

From Lemmas~\ref{lem:lg2asg}, \ref{lem:asg2lg}, \ref{lem:const2srlg}, and \ref{lem:const2rasg}, 
Theorem~\ref{thm:rrssr} is easily obtained.

\begin{theorem} \label{thm:rrssr} 
Let $\alpha \in \{+, -\}$. 
A bidirected graph with vertex $r$ is a member of $\rrset{r}{\alpha}$ 
if and only if it is a round $\alpha$-radial with root $r$. 
A bidirected graph with vertex $r$ is a member of $\lsrset{r}{\alpha}$ 
if and only if it is a linear $\alpha$-semiradial with root $r$. 
\end{theorem}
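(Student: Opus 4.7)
The plan is to prove both biconditionals simultaneously by induction on $|V(G)|$. The ``only if'' direction proceeds by structural induction on the construction rules defining $\rrset{r}{\alpha}$ and $\lsrset{r}{\alpha}$, while the ``if'' direction uses the decomposition Lemmas~\ref{lem:lg2asg} and \ref{lem:asg2lg} to peel off a ground, apply the inductive hypothesis to the resulting smaller quotient, and rebuild using rules~(iii)--(v).

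For the ``only if'' direction, the base cases (i) and (ii) are immediate from the definitions, since an almost strong $\alpha$-radial is in particular a round $\alpha$-radial and a linear $\alpha$-semiradial is a sharp $\alpha$-semiradial. For rule~(iii), Lemma~\ref{lem:rradd} shows that reachability sets are preserved when adding edges at $r$ with sign $\alpha$, so the result remains an $\alpha$-radial with sublinear root; moreover $\oNei{G}{r}{-\alpha}$ is unchanged by such additions, so roundness is preserved. Rules~(iv) and (v) are handled directly by the construction Lemmas~\ref{lem:const2rasg} and \ref{lem:const2srlg}, which also identify the almost strong (respectively linear) ground of the new graph as $H$.

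For the ``if'' direction, suppose $G$ is a round $\alpha$-radial with root $r$ and let $H$ be its almost strong ground. If $V(G) = V(H)$, then $G = H$ is almost strong and $G \in \rrset{r}{\alpha}$ by rule~(i). Otherwise, set $G' := (G - E_G[r, V(G)\setminus V(H)])/H$. By Lemma~\ref{lem:asg2lg}, $G'$ is a sharp $\alpha$-semiradial with root $h$ (the contracted vertex), and $|V(G')| < |V(G)|$, so the inductive hypothesis gives $G' \in \lsrset{h}{\alpha}$. Rule~(iv) then places some gluing sum $(G'; h) \oplus (H; V(H)\setminus \{r\})$ into $\rrset{r}{\alpha}$; chosen appropriately this gluing sum equals $G - E_G[r, V(G)\setminus V(H)]$. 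Finally, rule~(iii) restores the deleted edges, which have sign $\alpha$ at $r$ by Lemma~\ref{lem:asg2neigh}. The case where $G$ is a sharp $\alpha$-semiradial with linear ground $H$ is analogous but simpler: if $V(G) = V(H)$ use rule~(ii), otherwise contract by Lemma~\ref{lem:lg2asg}, apply induction to the round radial $G/H$, and reconstruct via rule~(v) with no edge deletion needed.

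The main obstacle is the non-uniqueness of gluing sums: in the reconstruction step I must verify that among all possible gluing sums of $G'$ and $H$ there exists one that yields exactly the original $G$ (up to the $r$-edges added back by rule~(iii)). This amounts to matching the edges of $\parcut{G}{V(H)\setminus \{r\}}$ with the edges at $h$ in $G'$ under the contraction, which is routine from the definition of contraction but requires careful bookkeeping of signs at the gluing vertices.
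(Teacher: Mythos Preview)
Your proposal is correct and takes essentially the same approach as the paper: the paper's proof is the single sentence ``From Lemmas~\ref{lem:lg2asg}, \ref{lem:asg2lg}, \ref{lem:const2srlg}, and \ref{lem:const2rasg}, Theorem~\ref{thm:rrssr} is easily obtained,'' and you have written out the simultaneous induction those four lemmas are meant to support, together with the auxiliary Lemmas~\ref{lem:rradd} and \ref{lem:asg2neigh} needed to handle rule~(iii) and the restoration of the $r$-edges. Your remark about the gluing-sum bookkeeping is apt; note that the maximality of the almost strong (respectively linear) ground forces $H$ to be an induced subgraph, which is what guarantees that some gluing sum $(G';h)\oplus(H;V(H)\setminus\{r\})$ recovers the deleted-and-contracted graph exactly.
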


\subsection{Characterizations of General Radials and Semiradials} \label{sec:characterization:rsr}

In Section~\ref{sec:characterization:rsr}, 
we finally present constructive characterizations for general radials and semiradials  
using results from Sections~\ref{sec:decomposition}, \ref{sec:construct}, and \ref{sec:characterization:rrssr}. 
Lemmas~\ref{lem:abg2lg} and \ref{lem:const2srag} imply Theorem~\ref{thm:sr}.

\begin{theorem} \label{thm:sr} 
Let $\alpha \in \{+, -\}$.  
A bidirected graph with vertex $r$ is an $\alpha$-semiradial with root $r$ 
if and only if 
it is a gluing sum $(G, s) \oplus (H; V(H))$, 
where $G$ is a member of $\lsrset{s}{\alpha}$ 
and $H$ is an absolute semiradial with root $r$. 
\end{theorem}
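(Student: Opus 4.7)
The plan is to derive Theorem~\ref{thm:sr} by combining three ingredients already at hand: the decomposition Lemma~\ref{lem:abg2lg}, the gluing Lemma~\ref{lem:const2srag}, and the characterization Theorem~\ref{thm:rrssr} of the class $\lsrset{\cdot}{\alpha}$. One direction is purely a construction (sufficiency) and the other is purely a decomposition (necessity); each is handled by one of the two lemmas.

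For sufficiency, suppose $G\in\lsrset{s}{\alpha}$ and $H$ is an absolute $\alpha$-semiradial with root $r$, with $V(G)$ and $V(H)$ disjoint, and set $\hat{G} := (G;s)\oplus(H;V(H))$. By Theorem~\ref{thm:rrssr}, $G$ satisfies the hypothesis imposed on the first summand in Lemma~\ref{lem:const2srag}, and that lemma then yields directly that $\hat{G}$ is an $\alpha$-semiradial with root $r$ (whose absolute ground is $H$).

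For necessity, let $\hat{G}$ be an $\alpha$-semiradial with root $r$ and let $H$ be its absolute ground. Lemma~\ref{lem:abg2lg} gives that $\hat{G}/H$ is a sharp $\alpha$-semiradial whose root is the contracted vertex $h$ corresponding to $V(H)$, and Theorem~\ref{thm:rrssr} then places $\hat{G}/H$ in $\lsrset{h}{\alpha}$. Setting $G := \hat{G}/H$ and $s := h$, it remains to observe that $\hat{G}$ is (a representative of) the gluing sum $(G;s)\oplus(H;V(H))$; this is essentially tautological, since contraction produces $\hat{G}/H$ from $\hat{G}$ by collapsing $V(H)$ to $h$ and rerouting each edge of $\parcut{\hat{G}}{V(H)}$ to $h$ (edges with both ends in $V(H)\cap\parNei{\hat{G}}{V(\hat{G})\setminus V(H)}$ becoming loops at $h$), and the bijection $f$ in the definition of gluing sum is precisely the inverse of this rerouting.

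The only moderately delicate step is this last bookkeeping identification of $\hat{G}$ with a gluing sum of $\hat{G}/H$ and $H$; it requires no semiradial-specific reasoning, only a careful reading of the definitions of contraction and gluing sum. With that in hand, Theorem~\ref{thm:sr} is an immediate consequence of the cited lemmas and Theorem~\ref{thm:rrssr}.
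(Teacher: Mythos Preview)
Your proposal is correct and follows essentially the same approach as the paper, which simply states that Lemmas~\ref{lem:abg2lg} and \ref{lem:const2srag} imply Theorem~\ref{thm:sr}. You have supplied the details the paper omits: the explicit invocation of Theorem~\ref{thm:rrssr} to translate between membership in $\lsrset{s}{\alpha}$ and sharpness, and the routine verification that $\hat{G}$ is recovered as a gluing sum of $\hat{G}/H$ and $H$.
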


Lemmas~\ref{lem:stg2lg} and \ref{lem:const2rstg} imply Theorem~\ref{thm:strr}.

\begin{theorem}  \label{thm:strr} 
Let $\alpha \in \{+, -\}$.  
A bidirected graph with vertex $r$ is an $\alpha$-radial with strong root $r$ 
if and only if 
it is a gluing sum $(G, s) \oplus (H; V(H))$, 
where $G$ is a member of $\lsrset{s}{\alpha}$ 
and $H$ is a strong $\alpha$-radial with root $r$.  
\end{theorem}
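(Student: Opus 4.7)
The plan is to prove both directions of the biconditional by mirroring the proof sketch for Theorem~\ref{thm:sr}, with the decomposition/construction pair Lemma~\ref{lem:stg2lg} and Lemma~\ref{lem:const2rstg} taking the place of Lemma~\ref{lem:abg2lg} and Lemma~\ref{lem:const2srag}. In essence, the strong ground plays for radials with strong root the same role that the absolute ground plays for arbitrary semiradials, and the characterization of $\lsrset{}{}$ in Theorem~\ref{thm:rrssr} handles the remaining quotient factor.

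For the ``only if'' direction, I would begin with an $\alpha$-radial $X$ with strong root $r$. Because $r$ is strong, $X$ possesses a strong ground $H$, and applying Lemma~\ref{lem:stg2lg} yields that the contraction $X/H$ is a sharp $\alpha$-semiradial with root $h$, where $h$ denotes the contracted vertex corresponding to $H$. By Theorem~\ref{thm:rrssr}, this sharp $\alpha$-semiradial lies in $\lsrset{h}{\alpha}$. Setting $G := X/H$ and $s := h$, the definitions of gluing sum and contraction make it immediate that $X$ is recovered as $(G; s)\oplus (H; V(H))$: expanding $h$ back to the vertex set $V(H)$ and restoring the incident edges across the cut reproduces $X$ exactly.

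For the ``if'' direction, suppose $X = (G; s)\oplus (H; V(H))$ with $G \in \lsrset{s}{\alpha}$ and $H$ a strong $\alpha$-radial with root $r$. Applying Theorem~\ref{thm:rrssr} in the reverse direction reinterprets $G$ as a sharp $\alpha$-semiradial with root $s$. The hypotheses of Lemma~\ref{lem:const2rstg} are then satisfied verbatim, and that lemma directly produces the desired conclusion that $X$ is an $\alpha$-radial with strong root $r$ whose strong ground is exactly $H$.

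I expect no substantive obstacle, since both directions reduce essentially to one-line applications of Theorem~\ref{thm:rrssr} together with a single decomposition or construction lemma whose work has already been done. The only delicate point is the bookkeeping around the contraction, specifically checking that the contracted vertex $h = [H]$ in $X/H$ and the set of cross-edges $\parcut{X}{H}$ line up correctly under the identification prescribed by $\oplus$, so that the equality $X = (X/H; h)\oplus (H; V(H))$ is unambiguous. This is immediate from the definitions, but should be stated explicitly for clarity.
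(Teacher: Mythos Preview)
Your proposal is correct and follows essentially the same route as the paper, which simply states that Lemmas~\ref{lem:stg2lg} and \ref{lem:const2rstg} imply the theorem. You have merely made explicit the appeal to Theorem~\ref{thm:rrssr} (to pass between ``sharp $\alpha$-semiradial'' and ``member of $\lsrset{s}{\alpha}$'') and the routine identification $X = (X/H; h)\oplus (H; V(H))$, both of which the paper leaves implicit.
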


Lemmas~\ref{lem:shell} and \ref{lem:triplex2const} imply Theorem~\ref{thm:triplex}.

\begin{theorem} \label{thm:triplex} 
Let $\alpha \in \{+, -\}$.  
The following two statements are equivalent: 
\begin{rmenum} 
\item  A bidirected graph $G$ is an $\alpha$-triplex radial with root $r$. 
\item Let $H_1$ be  an almost strong $\alpha$-radial with root $r$, 
 $H_2$ be a sublinear $\alpha$-radial with root $r$, 
and $H_3$ be a linear $\alpha$-semiradial with root $s$ 
such that $V(H_2)\cap V(H_1) = \{r\}$, $V(H_3)\cap V(H_i) = \emptyset$ for every $i \in \{1, 2\}$, 
and if $V(H_1) = \{r\}$ then $V(H_3) = \{s\}$. 
A bidirected graph $G$ is a 
gluing sum  $(H_3; s) \oplus (H_1 + H_2; V(H_1)\setminus \{r\})$ 
or a bidirected graph obtained from this sum by adding some edges between 
\begin{rmenum} 
\item $V(H_2)\setminus \{r\}$ and $( V(H_1)\setminus \{r\}) \cup ( V(H_3)\setminus \{s\})$ in which the end in $V(H_2)$ has the sign $\alpha$,  or 
\item $r$ and $V(H_3)\setminus \{s\}$ in which $r$ has the sign $\alpha$. 
\end{rmenum} 

\end{rmenum} 
\end{theorem}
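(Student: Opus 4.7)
The implication (ii)$\Rightarrow$(i) is exactly the content of Lemma~\ref{lem:triplex2const}, so the plan is to focus on (i)$\Rightarrow$(ii). Given a triplex $\alpha$-radial $G$ with sublinear root $r$, I will use the structural decomposition already supplied by Lemma~\ref{lem:shell} to read off the three ingredients $H_1$, $H_2$, $H_3$ directly from $G$.

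Let $H$ be the almost strong ground of $G$; since $G$ is triplex, its extended ground coincides with $G$, so the first and second shells $S_1$, $S_2$ partition $V(G)\setminus V(H)$. I set $H_1 := H$, $H_2 := G[S_1 \cup \{r\}]$, and $H_3 := G[S_2 \cup V(H)] / V(H) - E_G[r, S_2]$, with $s$ denoting the contracted vertex corresponding to $V(H)$. By Lemma~\ref{lem:shell}\ref{item:shell:r}, $H_2$ is a sublinear $\alpha$-radial with root $r$; by Lemma~\ref{lem:shell}\ref{item:shell:sr}, $H_3$ is a linear $\alpha$-semiradial with root $s$; and by definition $H_1$ is an almost strong $\alpha$-radial with root $r$. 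The disjointness requirements $V(H_2)\cap V(H_1)=\{r\}$ and $V(H_3)\cap V(H_i)=\emptyset$ hold by construction, and the degenerate case $V(H_1)=\{r\}$ forces $S_2 = \emptyset$ by Observation~\ref{obs:shell}, which in turn forces $V(H_3)=\{s\}$.

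Next I verify that $G$ is recovered from these three pieces by the prescribed gluing sum, possibly after the admissible extra-edge additions. The base sum $(H_3; s)\oplus(H_1 + H_2; V(H_1)\setminus\{r\})$ restores all edges of $G[S_2 \cup V(H)]$ whose endpoints lie in $V(H)\setminus\{r\}$ or in $S_2$, together with $H_1 + H_2$; what is missing are precisely the edges of $G$ joining $S_2$ to $r$ and the edges of $G$ between $S_1$ and $V(H)\cup S_2$ (the edges inside $S_1$ and from $S_1$ to $r$ are already present in $H_2$). For these missing edges, Lemma~\ref{lem:shell}\ref{item:shell:lgcut}, \ref{item:shell:rcut} guarantees that the end at $r$ or at $S_1$ carries the sign $\alpha$, which is exactly what is allowed by the two bullet points in (ii). Hence $G$ can be written in the required form.

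The main obstacle is bookkeeping: one must be careful that every edge of $G$ ends up in exactly one of the four categories (inside $H_1$, inside $H_2$, realized by the gluing sum of $H_3$ with $H_1$ through $V(H_1)\setminus\{r\}$, or an added $\alpha$-edge at $r$ or at $S_1$), and that the contraction defining $H_3$ does not inadvertently merge or delete anything needed to reconstruct $G$. Here the absence of edges between $r$ and $S_2$ inside $H_3$ (they are explicitly removed and then re-added through clause (ii)(b)) and the sign constraints from Lemma~\ref{lem:shell}\ref{item:shell:lgcut}, \ref{item:shell:rcut} are exactly what is needed to make the reconstruction match $G$. Once this matching is done, (i)$\Rightarrow$(ii) is complete, and the theorem follows by combining with Lemma~\ref{lem:triplex2const}.
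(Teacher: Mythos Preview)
Your proposal is correct and follows exactly the approach the paper intends: it uses Lemma~\ref{lem:triplex2const} for (ii)$\Rightarrow$(i) and Lemma~\ref{lem:shell} (together with Observation~\ref{obs:shell}) for (i)$\Rightarrow$(ii), which is precisely what the paper records as ``Lemmas~\ref{lem:shell} and \ref{lem:triplex2const} imply Theorem~\ref{thm:triplex}.'' Your careful edge bookkeeping simply spells out the details the paper leaves implicit.
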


Lemmas~\ref{lem:shell2astg} and \ref{lem:const2rrtriplex} imply Theorem~\ref{thm:sublinrr}.

\begin{theorem} \label{thm:sublinrr} 
Let $\alpha \in \{+, -\}$.  
Bidirected graph with vertex $r$ is an $\alpha$-radial with sublinear root $r$ 
if and only if 
it is a triplex $\alpha$-radial with root $r$ or a gluing sum $(G, s) \oplus (H; S)$, 
where $G$ is a member of $\rrset{s}{\alpha}$ 
and $H$ is a triplex $\alpha$-radial with root $r$ whose shell is $S$. 
\end{theorem}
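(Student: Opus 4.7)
The plan is to prove the equivalence by combining Lemmas~\ref{lem:shell2astg} and~\ref{lem:const2rrtriplex}, as flagged by the author, using Theorem~\ref{thm:rrssr} to pass between ``$G' \in \rrset{s}{\alpha}$'' and ``$G'$ is a round $\alpha$-radial with sublinear root $s$''.

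For the ``if'' direction I would dispatch the two disjuncts separately. If $G$ is itself a triplex with root $r$, then $G$ equals its extended ground and is therefore an $\alpha$-radial with sublinear root $r$ by definition. If instead $G = (G'; s) \oplus (H; S)$ with $G' \in \rrset{s}{\alpha}$ and $H$ a triplex whose shell is $S$, then Theorem~\ref{thm:rrssr} recognizes $G'$ as a round $\alpha$-radial with sublinear root $s$, and Lemma~\ref{lem:const2rrtriplex}, which is tailored to exactly this setup, yields that $G$ is an $\alpha$-radial with sublinear root $r$.

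For the ``only if'' direction, let $G$ be an $\alpha$-radial with sublinear root $r$ and let $I$ be its extended ground. If $V(G) = V(I)$, then $G = I$ (the extended ground is induced on its vertex set by maximality), so $G$ is a triplex by definition, matching the first disjunct. Otherwise $V(G)\setminus V(I) \neq \emptyset$; Lemma~\ref{lem:shell2astg} gives that $G/I$ is a round $\alpha$-radial with sublinear root $i$, and Theorem~\ref{thm:rrssr} places $G/I$ in $\rrset{i}{\alpha}$. I would set $G' := G/I$ and $H := I$ and identify $G$ with the gluing sum $(G'; i) \oplus (H; S)$, where $S$ is the shell of $I$, together with the possible added edges at $r$ of sign $\alpha$ from Lemma~\ref{lem:const2rrtriplex}.

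The hard part is justifying this last identification: one must verify both that $S \neq \emptyset$ (so the gluing is well-defined) and that every edge of $G$ crossing between $V(I)$ and $V(G)\setminus V(I)$ is accommodated, either by the gluing redistribution along $S$ or by the permitted added edges at $r$. I would use Lemma~\ref{lem:shellneigh2strong}~\ref{item:shellneigh2strong:round} to produce strong vertices in $V(G)\setminus V(I)$ adjacent to $V(I)$ via $(-\alpha)$-signed edges at their $V(I)$-end; Lemma~\ref{lem:rrneigh2g} rules out $r$ as such an endpoint, while Lemma~\ref{lem:astgneigh2sublinear} rules out the vertices of the almost strong ground other than $r$; this both pins such endpoints to the shell and shows $S \neq \emptyset$. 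A parallel case analysis for $\alpha$-signed crossing edges completes the decomposition, with endpoints at $r$ absorbed into Lemma~\ref{lem:const2rrtriplex}'s added-edges provision.
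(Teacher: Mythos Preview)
Your proposal is correct and follows the paper's approach: the paper's entire proof is the single line ``Lemmas~\ref{lem:shell2astg} and \ref{lem:const2rrtriplex} imply Theorem~\ref{thm:sublinrr},'' and you have correctly unpacked this, invoking Theorem~\ref{thm:rrssr} to pass between round radials and $\rrset{s}{\alpha}$. Your attention to the nonemptiness of $S$ and to the edges between $r$ and $V(G)\setminus V(I)$ (absorbed via the added-edges clause of Lemma~\ref{lem:const2rrtriplex}) in fact goes beyond what the paper spells out.
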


Combined with Theorem~\ref{thm:rrssr}, 
Theorems~\ref{thm:sr} and \ref{thm:strr} provide 
constructive characterizations of 
 semiradials and radials with strong root, respectively.  
Also, Theorems~\ref{thm:triplex} and \ref{thm:sublinrr} provide 
a constructive characterization of radials with sublinear root. 
 Here, 
 absolute semiradials, strong and almost strong radials, linear semiradials, and sublinear radials 
 serve as fundamental building blocks.

\bibliographystyle{splncs03.bst}
\bibliography{sear.bib}

\setcounter{theorem}{0}
\renewcommand{\thetheorem}{A.\arabic{theorem}}

\section*{Appendix} 

In the Appendix, we present some preliminary definitions and results from our preceding paper~\cite{kita2019constructive}.

\begin{definition} 
Let $G$ be a bidirected graph,  let $r\in V(G)$, and let $\alpha\in\{+, -\}$. 
We call $G$ an {\em $\alpha$-radial} with {\rm root} $r$ 
if, for every $v\in V(G)$,  there is an $(\alpha, -\alpha)$-ditrail from $v$ to $r$.  
We call $G$ an {\em $\alpha$-semiradial} with {\rm root} $r$ 
if, for every $v\in V(G)$,  there is an $\alpha$-ditrail from $v$ to $r$.  
\end{definition}

\begin{definition} 
Let $G$ be a bidirected graph, and let $r\in V(G)$. 
We call $G$ an {\em absolute semiradial} with root $r$ 
if $G$ is an $\alpha$-semiradial with root $r$ for each $\alpha\in\{+, -\}$. 
\end{definition} 

\begin{definition} 
Let $\alpha\in\{+, -\}$. 
An $\alpha$-radial $G$ with root $r\in V(G)$ is said to be {\em strong} 
if,  for every $v\in V(G)$,  there is a $(-\alpha, -\alpha)$-ditrail from $v$ to $r$. 
An $\alpha$-radial $G$ with root $r$ is said to be {\em almost strong} 
if, for every $v\in V(G)\setminus \{r\}$, 
there is a $(-\alpha, -\alpha)$-ditrail from $v$ to $r$, 
however there is no closed $(-\alpha, -\alpha)$-ditrail over $r$. 
\end{definition}

\begin{definition} 
Let $\alpha\in\{+, -\}$. 
Let $G$ be an $\alpha$-semiradial with root $r$.  
We say that $G$ is {\em linear} if $G$ has no loop edge over $r$, and $G$ has no $-\alpha$-ditrails from $x$ to $r$
 for any $x\in V(G)$ 
except for the trivial $(-\alpha, \alpha)$-ditrail from $r$ to $r$ with no edge.  
We say that $G$ is {\em sublinear} if if $G$ has  no $(-\alpha, -\alpha)$-ditrails from $x$ to $r$ for any $x\in V(G)$. 
\end{definition}

\begin{definition} 
We define a set $\sqrset{r}$ of bidirected graphs with vertex $r$ as follows: 
\begin{rmenum} 
\item 
The graph that consists of a single vertex $r$ and no edge is a member of $\sqrset{r}$. 
\item 
Let $H\in\sqrset{r}$, and 
let $P$ be a diear relative to $H$. 
Then, $H + P$ is a member of $\sqrset{r}$. 
\end{rmenum} 
\end{definition}

\begin{theorem} \label{thm:asr} 
Let $r$ be a vertex symbol. 
Then, $\sqrset{r}$ is the set of absolute semiradials with root $r$. 
\end{theorem}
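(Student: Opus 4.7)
The plan is to establish the two inclusions $\sqrset{r} \subseteq \{\text{absolute semiradials with root } r\}$ and its converse separately, in both cases by induction.

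For the inclusion $\sqrset{r} \subseteq \{\text{absolute semiradials}\}$, I would induct on the number of diears used in the construction. The base case, the one-vertex graph, is trivially an absolute semiradial. For the inductive step, assume $H \in \sqrset{r}$ is an absolute semiradial and $P$ is a diear relative to $H$; I must show every $v \in V(H + P)$ admits both a $+$-ditrail and a $-$-ditrail to $r$. Vertices of $H$ inherit their ditrails directly. For an internal vertex $v$ of $P$, I would cut $P$ at $v$ and concatenate one of the two resulting portions with a suitable ditrail in $H$ to $r$, choosing the initial sign at $v$ by choosing which portion to use, and using the absoluteness of $H$ to match the junction sign at the corresponding endpoint of $P$. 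A scoop diear requires a bit more care because both endpoints coincide with the grip edge's endpoint; but one still obtains the desired sign flexibility from the two traversal directions.

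For the converse inclusion, I would induct on $|E(G)|$ (with ties broken by $|V(G)|$). If $V(G) = \{r\}$ and there are no edges, $G \in \sqrset{r}$ by definition. Otherwise, the goal is to exhibit a subgraph $H \subseteq G$ and a diear $P$ relative to $H$ so that $G = H + P$ and $H$ is still an absolute semiradial with root $r$; the inductive hypothesis then places $H \in \sqrset{r}$, and the constructive rule yields $G \in \sqrset{r}$. A natural strategy is to take $H$ to be a maximal proper subgraph of $G$ that is an absolute semiradial with root $r$ (such an $H$ exists because the trivial single-vertex subgraph qualifies), and then use Lemma~\ref{lem:neigh2ear} to extract a diear relative to $H$ through any vertex or edge of $G$ not covered by $H$.

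The main obstacle is precisely this extraction step: one must argue that if $V(H) \neq V(G)$ or $E(H) \neq E(G)$, then the excess structure in $G$ can be realized as a single diear relative to $H$ whose removal returns an absolute semiradial. To do this I would pick a vertex $x \in \parNei{G}{V(H)} \cap (V(G) \setminus V(H))$ or an edge in $\parcut{G}{V(H)}$ or inside $V(G) \setminus V(H)$, apply Lemma~\ref{lem:neigh2ear} twice (once for a $+$-ditrail from $x$ to $r$ and once for a $-$-ditrail) to produce candidate diears, and show that at least one of them can be added to $H$ while preserving maximality's contradiction — that is, $H$ together with this diear is still an absolute semiradial strictly inside $G$, contradicting maximality unless it is already $G$. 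The delicate point is handling the interaction between the two signs: one diear might not suffice to produce both $+$ and $-$ ditrails to $r$ from its internal vertices, so a careful case analysis on simple versus scoop diears, following the two possibilities in Lemma~\ref{lem:neigh2ear}, will be needed.
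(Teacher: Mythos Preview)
This theorem is not proved in the present paper: it is listed in the Appendix as a result imported from the preceding paper~\cite{kita2019constructive}, so there is no proof here against which to compare your proposal.

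Your outline is nonetheless essentially correct, but you are making the converse direction harder than it needs to be. You do not need two invocations of Lemma~\ref{lem:neigh2ear} or any case analysis on the ``interaction between the two signs''. A single diear $P$ relative to $H$ suffices, because the forward inclusion---which you have already argued---shows that $H + P$ is again an absolute semiradial. Since $G$ is absolute, the $-\beta$-ditrail hypothesis of Lemma~\ref{lem:neigh2ear} is automatically satisfied for any edge between $V(H)$ and its complement, so one application yields a diear $P \subseteq G$; then $H + P$ is an absolute semiradial properly containing $H$, and maximality of $H$ forces $H + P = G$. The two traversal directions of $P$ that you already invoke in the forward step are precisely what supply both a $+$-ditrail and a $-$-ditrail to $r$ from each internal vertex of $P$; no second diear is required. (If $V(H) = V(G)$ but $E(H) \subsetneq E(G)$, any leftover edge is itself a simple diear relative to $H$, and the same maximality argument applies.)
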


\begin{definition} 
Let $r$ be a vertex symbol, and let $\alpha \in \{+, -\}$. 
We define a set $\acset{r}{\alpha}$ of bidirected graphs that have vertex $r$ as follows: 
\begin{rmenum} 
\item \label{item:base} 
A simple $(-\alpha, -\alpha)$-diear  relative to $r$  is an element of $\acset{r}{\alpha}$. 
\item \label{item:inductive}
If $G\in \acset{r}{\alpha}$ holds and $P$ is a diear   relative to $G$, 
then $G + P \in \acset{r}{\alpha}$ holds. 
\end{rmenum}
\end{definition}

\begin{theorem} \label{thm:str} 
Let $r$ be a vertex symbol. 
Then, $\acset{r}{\alpha}$ is the set of strong $\alpha$-radials with root $r$. 
\end{theorem}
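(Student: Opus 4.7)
The plan is to prove the two directions separately, using Lemma~\ref{lem:shell2astg} for the necessity and Lemma~\ref{lem:const2rrtriplex} together with Theorem~\ref{thm:rrssr} for the sufficiency, parallel to the pattern employed for Theorems~\ref{thm:sr} and \ref{thm:strr}.

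For sufficiency, I would dispatch the two right-hand cases directly. If $G$ is already a triplex $\alpha$-radial with root $r$, then by the definition of triplex (it equals its own extended ground, whose root is sublinear), it is an $\alpha$-radial with sublinear root $r$. Otherwise, $G = (G', s) \oplus (H; S)$ with $G' \in \rrset{s}{\alpha}$ and $H$ a triplex $\alpha$-radial with root $r$ and shell $S$. Theorem~\ref{thm:rrssr} tells us $G'$ is a round $\alpha$-radial with sublinear root $s$, so Lemma~\ref{lem:const2rrtriplex} applies directly, yielding that $G$ is an $\alpha$-radial with sublinear root $r$ whose extended ground is $H$.

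For necessity, let $G$ be an $\alpha$-radial with sublinear root $r$, and let $I$ denote its extended ground. I first observe that $I$ coincides with its own extended ground: the almost strong grounds of $G$ and $I$ agree, and sublinearity in $G$ implies sublinearity in $I$ for vertices of $I$, so the maximality defining the extended ground is inherited. Hence $I$ is itself a triplex $\alpha$-radial with root $r$. If $V(G) = V(I)$, the maximality of $I$ forces $G = I$, placing us in the triplex case. Otherwise $V(G) \setminus V(I) \neq \emptyset$, and Lemma~\ref{lem:shell2astg} gives that $G/I$ is a round $\alpha$-radial with sublinear root $i$, so $G/I \in \rrset{i}{\alpha}$ by Theorem~\ref{thm:rrssr}. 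I then express $G$ as the gluing sum $(G/I; i) \oplus (I; S)$ where $S$ is the shell of $I$.

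The main obstacle is the structural check that this decomposition exactly realizes the gluing-sum form: every edge of $G$ crossing between $V(I)$ and $V(G) \setminus V(I)$ must have its $V(I)$-endpoint in the shell $S$, rather than in the almost strong ground $V(H) \setminus \{r\}$. Lemma~\ref{lem:astgneigh2sublinear} implies that any neighbor of $V(H) \setminus \{r\}$ outside $I$ would necessarily be sublinear in $G$; combining this with the maximality of $I$ as the extended ground (by exhibiting a strictly larger candidate containing such a hypothetical neighbor and appropriate extending paths within $V(I)$) rules out such cross-edges. Once this verification is in place, the decomposition matches Lemma~\ref{lem:const2rrtriplex}'s construction verbatim, completing the proof.
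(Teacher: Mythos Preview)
Your proposal does not address the stated theorem at all. Theorem~\ref{thm:str} asserts that $\acset{r}{\alpha}$ coincides with the set of \emph{strong} $\alpha$-radials with root $r$; the set $\acset{r}{\alpha}$ is built inductively from a simple $(-\alpha,-\alpha)$-diear relative to $r$ by repeatedly adding diears. Your argument, by contrast, is entirely about $\alpha$-radials with \emph{sublinear} root, triplex radials, extended grounds, shells, the set $\rrset{s}{\alpha}$, and the gluing-sum decomposition via Lemmas~\ref{lem:shell2astg} and \ref{lem:const2rrtriplex}. That is the content of Theorem~\ref{thm:sublinrr}, not Theorem~\ref{thm:str}. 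None of the objects you invoke --- extended grounds, shells, round radials --- are relevant to characterizing strong radials.

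Moreover, Theorem~\ref{thm:str} is listed in the Appendix as a result imported from the preceding paper, so the present paper contains no proof of it; the intended argument (from that earlier paper) is an ear-decomposition style induction showing that every strong $\alpha$-radial admits a diear relative to any proper strong sub-radial, and conversely that adding a diear preserves the property of being a strong $\alpha$-radial. Your proposal would need to be rewritten from scratch along those lines to match the actual statement.
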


\begin{definition} 
Let $\alpha\in\{+, -\}$. 
Define a set $\nacset{r}{\alpha}$ of bidirected graphs with a vertex $r$ as follows: 
\begin{rmenum} 
\item \label{item:const:base} Let $\beta \in \{+, -\}$,  let $r'$ be a vertex symbol distinct from $r$, 
let $G\in \acset{r'}{\beta}$ be a bidirected graph with $r\not\in V(G)$, 
and let $rr'$ be an edge for which the signs of $r$ and $r'$ are $-\alpha$ and $\beta$, respectively. 
Then, $G + rr'$ is a member of $\nacset{r}{\alpha}$. 
\item \label{item:const:edge}
Let $G\in \nacset{r}{\alpha}$, 
let $v\in V(G)$,  
and let $rv$ be an edge in which the sign of $r$ is  $\alpha$. 
Then, $G + rv$ is a member of $\nacset{r}{\alpha}$. 
\item \label{item:const:vertex} 
Let $G_1, G_2\in \nacset{r}{\alpha}$ be bidirected graphs with $V(G_1)\cap V(G_2) = \{r\}$. 
Then, $G_1 + G_2$ is a member of $\nacset{r}{\alpha}$.  
\end{rmenum} 
\end{definition} 

The following theorem is our constructive characterization of almost strong radials. 

\begin{theorem} \label{thm:asr2char} 
Let $r$ be a vertex symbol. 
Let $\alpha \in \{+, -\}$.  
Then, $\nacset{r}{\alpha}$ is the set of almost strong $\alpha$-radials with root $r$. 
\end{theorem}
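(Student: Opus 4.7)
The plan is to prove the set equality $\nacset{r}{\alpha} = \{\text{almost strong } \alpha\text{-radials with root } r\}$ by mutual inclusion. For the containment $\subseteq$, I will induct on the three construction rules. In base case~(i), the underlying $H \in \acset{r'}{\beta}$ is a strong $\beta$-radial by Theorem~\ref{thm:str}, so every vertex admits both $(\beta,-\beta)$- and $(-\beta,-\beta)$-ditrails to $r'$; prepending the edge $rr'$ (sign $\beta$ at $r'$, sign $-\alpha$ at $r$) turns each into a ditrail ending with sign $-\alpha$ at $r$, and since $rr'$ is the only edge at $r$, no closed $(-\alpha,-\alpha)$-ditrail over $r$ can form. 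For rule~(ii), the added edge $rv$ has sign $\alpha$ at $r$ and therefore cannot appear in any minimal closed $(-\alpha,-\alpha)$-ditrail over $r$, whose two edges at $r$ must both have sign $-\alpha$; a putative new such closed ditrail would reduce to one lying entirely in $G$, contradicting the IH. For rule~(iii), I decompose any closed $(-\alpha,-\alpha)$-ditrail at its visits to $r$ into sub-ditrails, each lying within a single $G_i$; chasing signs at $r$ along the decomposition locates some sub-ditrail that is itself a closed $(-\alpha,-\alpha)$-ditrail over $r$ in one of the $G_i$, contradicting the IH.

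For the converse containment, I induct on $|V(G)|+|E(G)|$, treating the trivial case $V(G)=\{r\}$ by convention. The plan has three cases. First, if $G - r$ is disconnected with components $C_1,\dots,C_k$, I set $G_i := G[V(C_i) \cup \{r\}]$; each $G_i$ inherits almost-strongness because, given any $(-\alpha,-\alpha)$-ditrail $P$ from $v\in V(G_i)\setminus\{r\}$ to $r$, its prefix $P'$ up to the first visit to $r$ stays within $G_i$ and must itself be a $(-\alpha,-\alpha)$-ditrail---otherwise the suffix of $P$ from $r$ back to $r$ would be a forbidden closed $(-\alpha,-\alpha)$-ditrail. An analogous argument recovers the $\alpha$-radial property for each $G_i$; the IH then applies, and rule~(iii) reassembles $G$. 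Second, if $G - r$ is connected and some $e \in \ocut{G}{r}{\alpha}$ admits $G - e$ still being an $\alpha$-radial, then neither $-\alpha$-reachability nor the no-closed-ditrail condition is affected by removing $e$, the IH applies to $G - e$, and rule~(ii) restores $G$.

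The remaining and principal case is when $G - r$ is connected and no edge of $\ocut{G}{r}{\alpha}$ is removable. The plan is to show this situation forces $\ocut{G}{r}{\alpha} = \emptyset$ and $|\ocut{G}{r}{-\alpha}| = 1$, so that $G = H + rr'$ with $H := G - r$ a strong $\beta$-radial at $r'$, matching base case~(i). Ruling out $|\ocut{G}{r}{-\alpha}| \geq 2$ under connectivity of $G - r$ proceeds by joining two distinct $-\alpha$-sign endpoints via a suitable trail in $G - r$ to fabricate a closed $(-\alpha,-\alpha)$-ditrail over $r$, contradicting almost-strongness. The hard part will be excluding $\ocut{G}{r}{\alpha} \neq \emptyset$: I must convert the criticality of every $\alpha$-sign edge at $r$ into a contradiction by constructing a rerouting, invoking the diear-based characterization of $\acset{r'}{\beta}$ from Theorem~\ref{thm:str} together with the no-closed-$(-\alpha,-\alpha)$-ditrail condition. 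Once this is in hand, $H = G - r$ inherits both the $\beta$-radial and strong structure at $r'$, completing the induction.
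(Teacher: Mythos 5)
First, a caveat about the ground truth: the statement you are proving is Theorem~\ref{thm:asr2char}, which sits in the Appendix and is explicitly imported from the preceding paper \cite{kita2019constructive} \emph{without proof}. There is therefore no in-paper argument to compare yours against, and I can only judge the proposal on its own merits. On those terms, your soundness direction ($\nacset{r}{\alpha}\subseteq\{\text{almost strong radials}\}$) is essentially complete and correct: the single edge $rr'$ at $r$ in the base case rules out any nontrivial closed ditrail over $r$; for rule~(ii) a closed $(-\alpha,-\alpha)$-ditrail using the new $\alpha$-signed root edge splits at the corresponding visit to $r$ into a closed $(-\alpha,-\alpha)$-ditrail lying in $G$; and the sign-chasing for rule~(iii) works because $r$ is a cut vertex separating $G_1$ and $G_2$.

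The gap is in the completeness direction, concentrated in your Case~3, which is where the entire content of the theorem lives and which you explicitly leave open. Two sub-claims are missing. (a) That the terminal case forces $\ocut{G}{r}{\alpha}=\emptyset$: this is actually fillable, because the root of an almost strong radial is sublinear and hence \emph{every} edge of $\ocut{G}{r}{\alpha}$ is deletable without affecting $(\beta,-\alpha)$-reachability (the rerouting argument of Lemmas~\ref{lem:oneadd} and~\ref{lem:rrdel} of the present paper); but note this same argument is also needed, and not supplied, to justify your Case~2 assertion that deleting $e$ preserves the $(-\alpha,-\alpha)$-ditrails to $r$. (b) That $|\ocut{G}{r}{-\alpha}|=1$ once $G-r$ is connected and $\ocut{G}{r}{\alpha}=\emptyset$. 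Your sketch --- join two $-\alpha$-signed root-neighbors $u_1,u_2$ by ``a suitable trail in $G-r$'' --- does not go through as stated: connectivity of $G-r$ yields an undirected path, not a ditrail with the prescribed signs $-\sigma_1$ at $u_1$ and $-\sigma_2$ at $u_2$, and the natural candidates (take a $(-\sigma_1,-\alpha)$-ditrail from $u_1$ to $r$ and replace its last root-edge) fail because the almost-strong hypothesis can force every such ditrail to terminate in $e_1=ru_1$ itself, so the concatenation with $(r,e_1,u_1)$ repeats an edge. Closing~(b) is precisely where the diear machinery behind Theorems~\ref{thm:asr} and~\ref{thm:str} is needed, and without it your induction does not reach the base case~(i). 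The remainder of Case~3 --- that once $\delta_G(r)=\{e_1\}$ the graph $G-r$ is a strong $\beta$-radial with root $u_1$ and $G$ matches rule~(i) --- is correct.
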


\end{document}